\documentclass[10pt,reqno]{amsart}
\usepackage{amsmath}
\usepackage{amsthm}
\usepackage{amssymb}
\usepackage{amsfonts}
\usepackage{subcaption}
\usepackage{graphicx}
\usepackage{latexsym}
\usepackage{cite, url, xcolor}
\usepackage{hyperref}
\usepackage{enumitem}
    \setenumerate{itemsep=5pt} % adjust space between list items
    \setitemize{itemsep=5pt} % adjust space between list items

\usepackage{mathtools}    
\mathtoolsset{showonlyrefs}

    \usepackage{tikz} 
    \usetikzlibrary{decorations.pathreplacing}
    \usetikzlibrary{arrows,shapes,backgrounds,3d}
\usetikzlibrary{decorations.pathreplacing,shapes,snakes,automata}
\usepackage{tkz-graph}

 % changes enumerate to use a,b,c,d
 % changes second level enumerate to use i,ii,iii,iv
\usepackage[font=small,labelfont=bf]{caption}

\usepackage[sc]{mathpazo}
\linespread{1.0}
\usepackage[T1]{fontenc} 
%
%\usepackage{tikz,amsmath,amsfonts}
%\usetikzlibrary{positioning}
%\usetikzlibrary{decorations.pathreplacing}

%\graphicspath{{IMAGES/}}
%% New Commands

\setlength\intextsep{2pt}

%%%%%%%%%%%%%%%%%%%%%%%%%%%%%%%%%%%%%%%%%%%%%%%%
%\setlength{\fboxsep}{.5\fboxsep}

\newcommand{\ket}[1]{\mathbf{#1}}

\renewcommand{\vec}[1]{\mathbf{#1}}

\renewcommand{\P}{\mathcal{P}}
\newcommand{\J}{\mathcal{J}}
\newcommand{\K}{\mathcal{K}}

\renewcommand{\S}{\mathfrak{S}}
\renewcommand{\phi}{\varphi}

\newcommand{\R}{\mathbb{R}}
\newcommand{\N}{\mathbb{N}}

\newcommand{\G}{\mathcal{G}}

\newcommand{\M}{\mathrm{M}}

\newcommand{\C}{\mathcal{C}}

\newcommand{\V}{\mathcal{V}}

\newcommand{\tr}{\operatorname{tr}}

\newcommand{\inner}[1]{\langle #1 \rangle}

\newcommand{\minimatrix}[4]{\begin{bmatrix} #1 & #2\\#3 & #4 \end{bmatrix}}

\newcommand{\Orb}{\operatorname{Orb}}

\newcommand{\0}{{\color{lightgray}0}}

%%%
%%% Theorem Styles
%%%
 \numberwithin{equation}{section}
\newtheorem{theorem}[equation]{Theorem}
\newtheorem{lemma}[equation]{Lemma}
\newtheorem{proposition}[equation]{Proposition}
\newtheorem{corollary}[equation]{Corollary}
\theoremstyle{definition}

\newtheorem{example}[equation]{Example}

\newtheorem{problem}[equation]{Problem}

%  These come from another paper 

%\newtheorem{Definition}{\bf Definition}[section]
%%\newtheorem{Thm}[Definition]{\bf Theorem}
%\newtheorem{Example}[Definition]{\bf Example}
%\newtheorem{Lem}[Definition]{\bf Lemma}
%\newtheorem{Cor}[Definition]{\bf Corollary}
%\newtheorem{Prop}[Definition]{\bf Proposition}
%\numberwithin{equation}{section}

\allowdisplaybreaks
%%
%% MAIN DOCUMENT
%%
\begin{document}
\title{Symmetric tensor powers of graphs}

\begin{abstract}
The symmetric tensor power of graphs is introduced and its fundamental properties are explored.  A wide range of
 intriguing phenomena occur when one considers symmetric tensor powers of familiar graphs. 
A  host of open questions are presented, hoping to spur future research.
\end{abstract}

\author[Astaiza]{Weymar Astaiza}
\address{Departamento de Matem\'aticas, Universidad del Cauca, Popay\'an, Colombia}
\email{wastaiza@unicauca.edu.co}

\author[Barrios]{Alexander J. Barrios}
\address{Department of Mathematics
University of St. Thomas
2115 Summit Avenue
St. Paul, MN 55105}
\email{abarrios@stthomas.edu}

\author[Chimal-Dzul]{Henry Chimal-Dzul}
\address{Department of Mathematics, University of Notre Dame}
\email{hchimald@nd.edu}

\author[Garcia]{Stephan Ramon Garcia}
\address{Department of Mathematics and Statistics, Pomona College, 610 N. College Ave., Claremont, CA 91711 USA} 
\email{stephan.garcia@pomona.edu}
\urladdr{\url{http://pages.pomona.edu/~sg064747}}

\author[de la Luz]{Jaaziel Lopez de la Luz}
\address{Department of Mathematics, UC Irvine}
%\email{jaaziel.lopez@gmail.com}
\email{jaaziel@uci.edu}

\author[Moll]{Victor H. Moll}
\address{Department of Mathematics, Tulane University, New Orleans, LA 70118 USA}
\email{vhm@tulane.edu}

\author[Puig]{Yunied Puig}
\address{Lafayette College}
%\email{puigdedios@gmail.com}
\email{puigdedy@lafayette.edu}

\author[Villamizar]{Diego Villamizar}
\address{Escuela de Ciencias Exactas e Ingenier\'ia\\Universidad Sergio Arboleda\\Bogot\'a, Colombia}
\email{diego.villamizarr@usa.edu.co}
\urladdr{\url{https://sites.google.com/view/dvillami/}}

\thanks{SRG acknowledges the support of National Science Foundation (NSF) grant DMS-2054002.  We also gratefully acknowledge the support
of the American Institute of Mathematics (AIM)}

\subjclass[2020]{05C76, 05C40}
\keywords{Graph, Kronecker product, tensor product, symmetric tensor product}

\maketitle

%SageMath the code linked here \url{https://github.com/Phicar/SymProdGraphs}.

%\comment{Victor and Diego: can you go through and address the comments in the margins?  Also please harmonize the notation in the blue portions with the notation in the rest of the paper.  We should use the same notation throughout, and the same letters for the same type of thing, etc.}%

\section{Introduction}\label{Section:Introduction}

A great variety of graph products, and hence graph powers, exist in the literature \cite{hammack-2011a}.
For example, the tensor (or Kronecker) product of graphs has a long and fruitful history \cite{henderson-1983a}.
In this paper we introduce the symmetric tensor power of graphs, a graph power that displays a variety of intriguing
phenomena.  Although the resulting graphs often bear surprising and counterintuitive features, the approach is well-motivated algebraically.
Indeed, the naturalness of the symmetric tensor power is illustrated by its compatibility with graph spectra.

%\comment{Can you come up with a catchy, flashy picture of a graph and one of its symmetric tensor powers?  We need an eye-catching and strange example to put in the introduction.  That will help hook the reader, and also get some graphics early in the paper since the first five pages are mostly linear algebra setup.}%

The structure of this paper is as follows.
Section \ref{Section:Linear} covers some linear-algebraic preliminaries before
symmetric tensor powers of graphs are defined in Section \ref{Section:Products} (with some computational material deferred until 
Appendix \ref{Section:WellDefined}).
The connection between this operation and graph spectra is considered in Section \ref{Section:Spectrum}.
We make several combinatorial observations in Section \ref{Section:Combinatorial} 
and study in Section \ref{Section:Particular} several curious phenomena that arise for familiar graphs.
Section \ref{Section:Wiener} concerns the Wiener index of certain graphs.
We conclude with a host of open questions about symmetric tensor powers of graphs, which indicates that
the subject is fertile ground for future exploration, in Section \ref{Section:Open}.

%%%%%%%%%%%%%%%%%%%%%%%%%%%%%%%%%%%%%%%%%%%%
%%%%%%%%%%%%%%%%%%%%%%%%%%%%%%%%%%%%%%%%%%%%
\section{Linear-algebraic preliminaries}\label{Section:Linear}
%\comment{I changed the acknowledgment to AIM, without mentioning the specific program.  Already the large number of authors, and our names, may bias referees against us.  Pointing out that this was specifically part of a Latinx program cannot help us with referees.  In the same way, saying ``REU paper'' when a student is a the paper guarantees rejection from decent journals.  Gotta play politics.}
%\comment{I changed the notation to something more standard since we were beginning to use too many fonts, boxes, and circles (mostly my fault) and the paper was becoming too idiosyncratic to catch on with readers.  The typical elements of $[n]^k$ are now denoted $\vec{i},\vec{j}$.}%

In what follows, we denote by $\M_n$ the set of real $n \times n$ matrices,
$\N := \{1,2,\ldots\}$ the set of natural numbers,
$[n] := \{1,2,\ldots,n \}$, and $|X|$ the cardinality of a set $X$.

Let $\V$ denote a real inner-product space with orthonormal basis $v_1,v_2,\ldots,v_n$.
For $k \in \N$, the $k$th \emph{tensor power} of $\V$ is the $n^k$-dimensional inner-product space $\V^{\otimes k}$ 
spanned by the \emph{simple tensors}
$v_{i_1} \otimes v_{i_2} \otimes  \cdots\otimes  v_{i_k}$, in which $(i_1,  i_2, \ldots, i_{k} ) \in [n]^k$,
and endowed with the inner product that linearly extends
\begin{equation}\label{eq:InnerProduct}
	\inner{ v_{i_1} \otimes v_{i_2} \otimes \cdots v_{i_k}, \,
	v_{j_1} \otimes v_{j_2} \otimes \cdots v_{j_k}}
	:=
	\inner{ v_{i_1}, v_{j_1}}\inner{ v_{i_2}, v_{j_2}}\cdots \inner{ v_{i_k}, v_{j_k}}.
\end{equation}
In particular, the $v_{i_1} \otimes v_{i_2} \otimes  \cdots\otimes  v_{i_k}$ comprise an orthonormal basis of $\V^{\otimes k}$.

The symmetric group $\S_{k}$ acts on $[n]^{k}$ by permutation. 
Let $\Orb(\vec{i})$ denote the orbit of $\vec{i} = (i_1,i_2,\ldots,i_k)$ under this action; that is,  
$\vec{j} \in \Orb(\vec{i})$ if and only if $\vec{j}$ is a permutation of $\vec{i}$. 
We let $\S_k$ permute simple tensors in the analogous manner.

Let $\V^{\odot{k}}$ denote the subspace of $\V^{\otimes k}$ spanned by the \emph{symmetric tensors}
\begin{equation}\label{eq:SymmetricTensor}
    v_{i_1} \odot v_{i_2} \odot \cdots \odot v_{i_k}
    := \frac{1}{k!} \sum_{\sigma \in \S_k} v_{i_{\sigma(1)}} \otimes v_{i_{\sigma(2)}} \otimes \cdots \otimes  v_{i_{\sigma(k)}}.
\end{equation}
Symmetric tensors are invariant under the action of $\S_k$, 
so there is a representative $k$-tuple for \eqref{eq:SymmetricTensor}
such that $i_1 \leq i_2 \leq \cdots \leq i_k$.
We have $\dim \V^{\odot{k}}  = \binom{n +k-1}{k}$.

\begin{example}\label{Example:Basic}
Let $n=k=2$.
Then $\V$ has orthonormal basis $v_1, v_2$ and $\V^{\otimes 2}$ is $4$-dimensional with
orthonormal basis $v_1 \otimes v_1 , \, v_1  \otimes v_2 , \, v_2  \otimes v_1 , \, v_2  \otimes v_2  $.
The $\S_2$-orbits in $\V^{\otimes 2}$ are 
$\Orb( v_1  \otimes v_1  )  =  \{ v_1  \otimes v_1  \}$,
$\Orb( v_1  \otimes v_2  )  =  \{ v_1  \otimes v_2 , \, v_2  \otimes v_1  \}$, and
$\Orb( v_2  \otimes v_2  )  =  \{ v_2  \otimes v_2  \}$. Thus,
$v_1  \otimes v_1 $,
$\tfrac{1}{2} ( v_1  \otimes v_2  + v_2  \otimes v_1  )$,
$v_2  \odot v_2 $ comprise a basis for $\V^{\odot{2}}$, which is $\binom{2+2-1}{2} = \binom{3}{2} = 3$ dimensional.
\end{example}

For $\vec{i} = (i_1,i_2,\ldots,i_k) \in [n]^k$, let
$\vec{m}(\vec{i})= (m_1,m_2,\ldots,m_n)$, in which each $m_{\ell}$ is the number of occurrences of $\ell$ in $\vec{i}$.
For example, if $\vec{i} = (1,3,2,4,3,1) \in [5]^6$, then $\vec{m}(\vec{i})= (2,1,2,1,0)$. 
We may write $\vec{m}$ if the dependence on $\vec{i}$ is clear. There are
\begin{equation*}
\binom{k}{\vec{m}} := \frac{k!}{m_1! m_2! \cdots m_n!}
\end{equation*}
elements of $[n]^k$ that give rise to the same symmetric tensor
$v_{i_1} \odot v_{i_2} \odot \cdots \odot v_{i_k}$.  The quantity above equals the cardinality of  $\Orb(\vec{i})$. 

\begin{lemma}\label{Lemma:ONB}
Fix $n,k \in \N$ and let $N = \binom{n +k-1}{k}$. The $N$ vectors
\begin{equation}\label{eq:ONB}
u_{\vec{i}} :=  \binom{k}{ \vec{m}(\vec{i})}^{1/2}
v_{i_1} \odot v_{i_2} \cdot \cdots \odot v_{i_k},
\end{equation}
where $\vec{i} = (i_1,i_2,\ldots,i_k) \in [n]^k$ is nondecreasing, form
an orthonormal basis for $\V^{\odot k}$. 
\end{lemma}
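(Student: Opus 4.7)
The plan is to verify orthonormality by direct computation from the definition \eqref{eq:SymmetricTensor} and the inner product \eqref{eq:InnerProduct}, and then invoke a dimension count to conclude that we have a basis.

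First I would establish spanning. Every symmetric tensor $v_{i_1}\odot\cdots\odot v_{i_k}$ is invariant under the $\S_k$-action on indices, so after reordering we may assume the representative tuple $\vec{i}$ is nondecreasing. Thus the family $\{u_{\vec{i}}\}$ indexed by nondecreasing $\vec{i}\in[n]^k$ spans $\V^{\odot k}$. The number of such nondecreasing tuples is the stars-and-bars count $\binom{n+k-1}{k}=N=\dim \V^{\odot k}$, so once orthonormality is established, spanning and basis-hood follow automatically.

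Next I would compute the inner product of two symmetric tensors. Expanding via \eqref{eq:SymmetricTensor}, for any $\vec{i},\vec{j}\in[n]^k$,
\begin{equation*}
\bigl\langle v_{i_1}\odot\cdots\odot v_{i_k},\, v_{j_1}\odot\cdots\odot v_{j_k}\bigr\rangle
=\frac{1}{(k!)^2}\sum_{\sigma,\tau\in\S_k}\bigl\langle v_{i_{\sigma(1)}},v_{j_{\tau(1)}}\bigr\rangle\cdots\bigl\langle v_{i_{\sigma(k)}},v_{j_{\tau(k)}}\bigr\rangle.
\end{equation*}
Because $v_1,\ldots,v_n$ is orthonormal, the summand equals $1$ exactly when $i_{\sigma(\ell)}=j_{\tau(\ell)}$ for every $\ell$, and $0$ otherwise. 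If $\vec{i}$ and $\vec{j}$ are nondecreasing and not equal, then they lie in different $\S_k$-orbits and no such $(\sigma,\tau)$ exists, so the inner product vanishes; this gives orthogonality.

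For the norm, take $\vec{i}=\vec{j}$ with $\vec{m}=\vec{m}(\vec{i})$. The nonzero summands are those with $\sigma\tau^{-1}$ lying in the stabilizer of $\vec{i}$, which is isomorphic to $\S_{m_1}\times\cdots\times\S_{m_n}$ and therefore has order $m_1!\cdots m_n!$. Summing over $\tau$ for each admissible $\sigma$ gives
\begin{equation*}
\bigl\|v_{i_1}\odot\cdots\odot v_{i_k}\bigr\|^2
=\frac{k!\,m_1!\cdots m_n!}{(k!)^2}=\binom{k}{\vec{m}}^{-1},
\end{equation*}
so the normalization $\binom{k}{\vec{m}}^{1/2}$ in \eqref{eq:ONB} produces $\|u_{\vec{i}}\|=1$, as desired. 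Combined with orthogonality and the dimension count, this proves that the $u_{\vec{i}}$ form an orthonormal basis of $\V^{\odot k}$. The main bookkeeping hurdle is the stabilizer computation; everything else is formal.
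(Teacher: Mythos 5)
Your proposal is correct and follows essentially the same route as the paper: orthogonality of distinct nondecreasing representatives via the orbit structure, a stabilizer count of order $m_1!\cdots m_n!$ to evaluate $\|v_{i_1}\odot\cdots\odot v_{i_k}\|^2 = \binom{k}{\vec{m}}^{-1}$, and a dimension count $\dim\V^{\odot k}=N$ to upgrade the orthonormal set to a basis. The only cosmetic difference is that you spell out the spanning argument explicitly, whereas the paper cites the dimension formula stated earlier in the text.
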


\begin{proof}
If $\vec{i},\vec{j} \in [n]^k$ are nondecreasing and distinct, 
they are in different $\S_k$ orbits in $[n]^k$, so \eqref{eq:InnerProduct} ensures that
$u_{\vec{i}}$ and $u_{\vec{j}}$ are orthogonal.
Since $\dim \V^{\odot{k}}  = N$, we need only
show that each $u_{\vec{i}}$ is a unit vector.  To this end, observe that
for each of the $N$ nondecreasing $\vec{i} = (i_1,i_2,\ldots,i_k) \in [n]^k$, 
\begin{align*}
\bigg\langle  \bigodot_{\ell=1}^k \ket{i}_{\ell}, \,\bigodot_{\ell=1}^k \ket{i}_{\ell} \bigg \rangle
&=\bigg\langle
\frac{1}{k!} \sum_{\sigma \in \S_k} \bigotimes_{\ell=1}^k \ket{i}_{\sigma(\ell)} , \,
\frac{1}{k!} \sum_{\tau \in \S_k} \bigotimes_{\ell=1}^k \ket{i}_{\tau(\ell)} 
\bigg\rangle \\
&=\frac{1}{(k!)^2} \sum_{\sigma \in \S_k} \bigg( \sum_{\tau \in \S_k} \prod_{\ell=1}^k \big\langle \ket{i}_{\sigma(\ell)} , \,\ket{i}_{\tau(\ell)}  \big\rangle \bigg) \\
&=\frac{1}{(k!)^2} \sum_{\sigma \in \S_k} \big| \big\{ \tau \in \S_k : \text{$\ket{i}_{\tau(\ell)} = \ket{i}_{\sigma(\ell)}$ for all $\ell \in [n]$} \big\} \big|\\
&=\frac{1}{(k!)^2} \!\!\sum_{\sigma \in \S_k} m_1! m_2! \cdots m_n! 
=\frac{m_1! m_2! \cdots m_n!}{k!} 
= \binom{k}{\vec{m}(\vec{i})}^{-1}\!\!\!\!. \qedhere
\end{align*}
\noindent
\end{proof}

Let $A$ be a linear operator on $\V$.
For $i,j \in [n]$, the $(i,j)$ matrix entry of $A$ with respect to the orthonormal basis $v_1 , v_2 ,\ldots, v_n$ is
$[A]_{i,j} = \inner{ A v_j, v_i}$.  Define
\begin{equation}\label{eq:SymADefi}
A^{\odot k}(v_{i_1} \odot v_{i_2} \odot \cdots \odot v_{i_k})
:= \frac{1}{k!}\sum_{\sigma \in \S_k} (Av_{i_{\sigma(1)}}) \otimes (Av_{i_{\sigma(2)}}) \otimes \cdots \otimes (Av_{i_{\sigma(k)}})
\end{equation}
and extend this by linearity to $\V^{\odot k}$.  The right side above is permutation invariant, so 
$A^{\odot k}$ is a linear transformation from $\V^{\odot k}$ to itself.

\begin{lemma}\label{Lemma:MatrixRep}
The matrix entries of $A^{\odot k}$ with respect to the orthonormal basis \eqref{eq:ONB} are
\begin{equation*}
[A^{\odot k}]_{\vec{i}, \vec{j}}
= \frac{1}{ \sqrt{ \binom{k}{\vec{m}(\vec{i})} \binom{k}{\vec{m}(\vec{j})}} }
    \sum_{\substack{ \vec{p} \in \Orb( \vec{i}) \\ \vec{q} \in \Orb( \vec{j}) }} 
        \prod_{\ell=1}^{k} [A]_{p_{\ell}, q_{\ell}} ,
\end{equation*}
in which $\vec{i}, \vec{j}$ run over the $N = \binom{n +k-1}{k}$ nondecreasing
elements of $[n]^k$ and $\vec{p} = (p_1,p_2,\ldots,p_k), \, \vec{q} = (q_1,q_2,\ldots,q_k)$
belong to the $\S_k$ orbits of $\vec{i}$ and $\vec{j}$ in $[n]^k$, respectively.  In particular, 
$A^{\odot k}$ is real symmetric if $A$ is real symmetric.
\end{lemma}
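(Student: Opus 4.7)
The plan is to compute $[A^{\odot k}]_{\vec{i},\vec{j}} = \langle A^{\odot k} u_{\vec{j}}, u_{\vec{i}}\rangle$ directly from the definitions and then convert the resulting double sum over $\S_k \times \S_k$ into a double sum over the orbits $\Orb(\vec{i}) \times \Orb(\vec{j})$, absorbing the combinatorial factors along the way. The underlying mechanism is the same stabilizer count used in the unit-norm computation at the end of the proof of Lemma \ref{Lemma:ONB}.

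First I would substitute \eqref{eq:ONB} to pull out a factor of $\sqrt{\binom{k}{\vec{m}(\vec{i})} \binom{k}{\vec{m}(\vec{j})}}$, reducing the problem to evaluating the inner product of $A^{\odot k}(v_{j_1} \odot \cdots \odot v_{j_k})$ against $v_{i_1} \odot \cdots \odot v_{i_k}$. Expanding the outer symmetric tensor via \eqref{eq:SymmetricTensor}, applying \eqref{eq:SymADefi} to produce the inner one, and distributing by the multilinear rule \eqref{eq:InnerProduct} yields
\begin{equation*}
\frac{1}{(k!)^2} \sum_{\sigma, \tau \in \S_k} \prod_{\ell=1}^k [A]_{i_{\sigma(\ell)},\, j_{\tau(\ell)}}.
\end{equation*}
Next I would replace the $\S_k$-sums by orbit sums: as $\sigma$ ranges over $\S_k$, the tuple $(i_{\sigma(1)}, \ldots, i_{\sigma(k)})$ sweeps out $\Orb(\vec{i})$, hitting each element exactly $k!/\binom{k}{\vec{m}(\vec{i})}$ times (the stabilizer order), and analogously for $\tau$. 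The two stabilizer factors cancel one copy of $(k!)^2$, leaving $1/\bigl(\binom{k}{\vec{m}(\vec{i})} \binom{k}{\vec{m}(\vec{j})}\bigr)$; combined with the leading square root, this is exactly the claimed expression.

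For the symmetry statement, if $A$ is real symmetric then $[A]_{p_\ell, q_\ell} = [A]_{q_\ell, p_\ell}$, so the simultaneous exchange $\vec{i} \leftrightarrow \vec{j}$, $\vec{p} \leftrightarrow \vec{q}$ leaves the right-hand side invariant. The main obstacle is purely bookkeeping the interacting combinatorial factors — the $(k!)^2$ from the symmetrizer, the two stabilizer multiplicities, and the square roots coming from the normalization in \eqref{eq:ONB} — and checking that they assemble into the single factor $1/\sqrt{\binom{k}{\vec{m}(\vec{i})}\binom{k}{\vec{m}(\vec{j})}}$. No new idea is needed beyond what already appears in Lemma \ref{Lemma:ONB}.
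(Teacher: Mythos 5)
Your proposal is correct and follows essentially the same route as the paper: expand $\langle A^{\odot k} u_{\vec{j}}, u_{\vec{i}}\rangle$ via the symmetrizers, distribute the inner product using \eqref{eq:InnerProduct}, and convert the double sum over $\S_k \times \S_k$ into an orbit sum via the stabilizer count $k!/\binom{k}{\vec{m}(\cdot)}$, exactly as in Lemma \ref{Lemma:ONB}. If anything, your tracking of the $1/(k!)^2$ factor is more explicit than the paper's displayed computation, which suppresses it in the intermediate lines and reabsorbs it only at the orbit-conversion step.
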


\begin{proof}
A computation  using stabilizers gives 
\begin{align*}
[A^{\odot k}]_{\vec{i}, \vec{j}}
&= \big \langle A^{\odot k} u_\vec{j}, u_{\vec{i} } \big \rangle  \\
&= \sqrt{ \binom{k}{\vec{m}(\vec{i})} \binom{k}{\vec{m}(\vec{j})}} 
        \left \langle A^{\odot k} (v_{j_1} \odot v_{j_2} \cdot \cdots \odot v_{j_k}), \,
        v_{i_1} \odot v_{i_2} \cdot \cdots \odot v_{i_k} \right\rangle \\
&=   \sqrt{ \binom{k}{\vec{m}(\vec{i})} \binom{k}{\vec{m}(\vec{j})}}  \sum_{\sigma,\tau \in \S_k} 
        \bigg \langle \bigotimes_{\ell=1}^{k} A v_{j_{\sigma(\ell)}}  ,\, 
        \bigotimes_{\ell=1}^{k} v_{i_{\tau(\ell)}} \bigg \rangle \\
&=  \sqrt{ \binom{k}{\vec{m}(\vec{i})} \binom{k}{\vec{m}(\vec{j})}} 
    \sum_{\sigma,\tau \in \S_k} 
        \prod_{\ell=1}^{k} \langle Av_{j_{\sigma(\ell)}} , \, v_{i_{\tau(\ell)}} \rangle \\
&=  \sqrt{ \binom{k}{\vec{m}(\vec{i})} \binom{k}{\vec{m}(\vec{j})}} 
    \sum_{\sigma,\tau \in \S_k} 
        \prod_{\ell=1}^{k} [A]_{j_{\sigma(\ell)}, i_{\tau(\ell)}} \\
&=  \sqrt{ \binom{k}{\vec{m}(\vec{i})} \binom{k}{\vec{m}(\vec{j})}} 
    \sum_{\substack{ \vec{p}\in \Orb( \vec{i}) \\ \vec{q} \in \Orb(\vec{j})}} 
\binom{k}{\vec{m}(\vec{i})}^{-1} \binom{k}{\vec{m}(\vec{j})}^{-1}
        \prod_{\ell=1}^{k} [A]_{p_{\ell}, q_{\ell}} \\
&=  \frac{1}{ \sqrt{ \binom{k}{\vec{m}(\vec{i})} \binom{k}{\vec{m}(\vec{j})}} }
    \sum_{\substack{ \vec{p}\in \Orb( \vec{i}) \\ \vec{q} \in \Orb(\vec{j})}} 
        \prod_{\ell=1}^{k} [A]_{p_{\ell}, q_{\ell}} . \qedhere
\end{align*}
\end{proof}

%%%%%%%%%%%%%%%%%%%%%%%%%%%%%%%%%%%%%%%%
%\section{Matrix examples}\label{Section:MatrixExamples}

We often identify $\V$ with $\R^n$ and $v_1 , v_2 ,\ldots, v_n$ with the standard basis.
Thus, we identify the linear operator $A$ on $\V$ with its matrix representation $[a_{ij}] \in \M_n$
with respect to $v_1 , v_2 ,\ldots, v_n $. Then $A^{\odot k}$ acts on the $N$-dimensional space $\V^{\odot k}$, in which 
$N = \binom{n+k-1}{k}$.  We identify $A^{\odot} \in \M_N$ with its matrix representation with respect to the orthonormal basis 
defined in Lemma \ref{Lemma:ONB}. 

\begin{example}\label{ex:Sym2Mat}
Let $n=k=2$ and $A = [a_{ij}] \in \M_2$.  
Then $A^{\odot 2} \in \M_3$ since $N=\binom{2+2-1}{2} =\binom{3}{2} = 3$.
Proposition \ref{Lemma:MatrixRep} and Table \ref{Table:n2k2}  produce
 \begin{equation}\label{eq:Ank22}
    A^{\odot 2}
    = 
    \begin{bmatrix}
    a_{11}^2 & a_{12}^2 &  \sqrt{2}a_{11}a_{12} \\[3pt]
    a_{21}^2 & a_{22}^2 &  \sqrt{2}a_{21}a_{22} \\[3pt]
    \sqrt{2} a_{11}a_{12} & \sqrt{2}a_{21}a_{22} &  a_{11}a_{22} +a_{12}a_{21}\\
    \end{bmatrix}.
\end{equation}
For example, 
{\small
\begin{equation*}
    [A^{\odot 2}]_{1,3}
    = \frac{1}{ \sqrt{ \frac{2!}{2!0!} \cdot \frac{2!}{1!1!} } } 
    \sum_{ \substack{ \vec{p} \in \{ (1,1)\} \\[2pt] \vec{q} \in \{ (1,2),(2,1) \} } }
    [A]_{p_1,q_1} [A]_{p_2,q_2} 
    = \frac{1}{ \sqrt{ 2 } }
    ( a_{11}a_{12} + a_{12}a_{11}) 
    = \sqrt{2} (a_{11} a_{12}).
\end{equation*}
}
 \begin{table}
        \begin{equation*}
       \begin{array}{|c|cccc|}
       \hline
        \text{index} &\vec{i} & \vec{m}(\vec{i}) & \binom{k}{\vec{m}(\vec{i})} & \Orb(\vec{i}) \\[3pt]
        \hline
        1 & (1,1) & (2,0) & \frac{2!}{2!0!} = 1 & \{ (1,1) \}\\[5pt]
        2 & (2,2) & (0,2) & \frac{2!}{0!2!} = 1 &\{ (2,2) \} \\[5pt]
        3 & (1,2) & (1,1) & \frac{2!}{1!1!} = 2 &\{ (1,2),(2,1) \}\\[3pt]
       \hline
        \end{array}
        \end{equation*}
        \caption{For $n=k=2$, there are $N=3$ nondecreasing elements of $[n]^k$.} 
        \label{Table:n2k2}
    \end{table}
 \end{example}

\begin{example}
Let $n=2$, $k=3$, and $A = [a_{ij}] \in \M_3$.   Then $A^{\odot 3} \in \M_4$ since $N = \binom{3+2-1}{3} = \binom{4}{3} = 4$.
Proposition \ref{Lemma:MatrixRep} and Table \ref{Table:n2k3}  produce
\begin{equation}\label{eq:Ank23}
A^{\odot 3} =
\begin{bmatrix}
a_{11}^{3} & a_{12}^{3} & \sqrt{3} a_{11}^{2} a_{12} & \sqrt{3} a_{11} a_{12}^{2} \\[3pt]
a_{21}^{3} & a_{22}^{3} & \sqrt{3} a_{21}^{2} a_{22} & \sqrt{3} a_{21} a_{22}^{2} \\[3pt]
\sqrt{3} a_{11}^{2} a_{21} & \sqrt{3} a_{12}^{2} a_{22} & 2 a_{11} a_{12} a_{21} + a_{11}^{2} a_{22} & a_{12}^{2} a_{21} + 2 a_{11} a_{12} a_{22} \\[3pt]
\sqrt{3} a_{11} a_{21}^{2} & \sqrt{3} a_{12} a_{22}^{2} & a_{12} a_{21}^{2} + 2 a_{11} a_{21} a_{22} & 2 a_{12} a_{21} a_{22} + a_{11} a_{22}^{2}
\end{bmatrix}.
\end{equation}
\noindent
For example,
\begin{align*}
[A^{\odot 3}]_{3,4}
    &= \frac{1}{ \sqrt{ \frac{3!}{2!1!} \cdot \frac{3!}{1!2!} } } 
    \sum_{ \substack{ \vec{p} \in \{(1,1,2),(1,2,1),(2,1,1)\}  \\[2pt] \vec{q} \in \{(1,2,2),(2,1,2),(2,2,1)\} }}
    [A]_{p_1,q_1} [A]_{p_2,q_2} [A]_{p_3,q_3} \\
    &= \frac{1}{3} ( a_{11}a_{12}a_{22} + a_{12} a_{11} a_{22} + a_{12}a_{12}a_{21} 
    +a_{11}a_{22}a_{12} + a_{12} a_{21} a_{12} \\
    &\quad\qquad  + a_{12} a_{22} a_{11} + a_{21} a_{12} a_{12} + a_{22} a_{11} a_{12} + a_{22} a_{12} a_{11}) \\
    &=a_{12}^{2} a_{21} + 2 a_{11} a_{12} a_{22}.
\end{align*}
 \begin{table}
 \captionsetup{width=.9\linewidth}
    \begin{equation*}
   \begin{array}{|c|cccc|}
   \hline
    \text{index} & \vec{i} & \vec{m}(\vec{i}) & \binom{k}{\vec{m}(\vec{i})} & \Orb(\vec{i}) \\
    \hline
    1 & (1,1,1) & (3,0) & \frac{3!}{3!0!} = 1 & \{(1,1,1) \}\\[5pt]
    2 & (2,2,2) & (0,3) & \frac{3!}{0!3!} = 1 & \{(2,2,2)\} \\[5pt]
    3 & (1,1,2) & (2,1) & \frac{3!}{2!1!} = 3 & \{(1,1,2),(1,2,1),(2,1,1)\}\\[5pt]
    4 & (1,2,2) & (1,2) & \frac{3!}{1!2!} = 3 & \{(1,2,2),(2,1,2),(2,2,1)\}\\[3pt]
   \hline
    \end{array}
    \end{equation*}
        \caption{For $n=2$ and $k=3$, there are $N=4$ nondecreasing
        elements of $[n]^k$.}
        \label{Table:n2k3}
    \end{table}
\end{example}

\begin{example}
Let $n=3$, $k=2$, and $A = [a_{ij}] \in \M_3$.  Then $A^{\odot 3} \in \M_6$ since
$N = \binom{3+2-1}{2} = \binom{4}{2} = 6$.
\noindent
Proposition \ref{Lemma:MatrixRep} and Table \ref{Table:n3k2} give
\begin{equation}\label{eq:Ank32}
A^{\odot{3}} = {\scriptsize
\begin{bmatrix}
a_{11}^{2} & a_{12}^{2} & a_{13}^{2} & \sqrt{2} a_{11} a_{12} & \sqrt{2} a_{11} a_{13} & \sqrt{2} a_{12} a_{13} \\[3pt]
a_{21}^{2} & a_{22}^{2} & a_{23}^{2} & \sqrt{2} a_{21} a_{22} & \sqrt{2} a_{21} a_{23} & \sqrt{2} a_{22} a_{23} \\[3pt]
a_{31}^{2} & a_{32}^{2} & a_{33}^{2} & \sqrt{2} a_{31} a_{32} & \sqrt{2} a_{31} a_{33} & \sqrt{2} a_{32} a_{33} \\[3pt]
\sqrt{2} a_{11} a_{21} & \sqrt{2} a_{12} a_{22} & \sqrt{2} a_{13} a_{23} & a_{12} a_{21} + a_{11} a_{22} & a_{13} a_{21} + a_{11} a_{23} & a_{13} a_{22} + a_{12} a_{23} \\[3pt]
\sqrt{2} a_{11} a_{31} & \sqrt{2} a_{12} a_{32} & \sqrt{2} a_{13} a_{33} & a_{12} a_{31} + a_{11} a_{32} & a_{13} a_{31} + a_{11} a_{33} & a_{13} a_{32} + a_{12} a_{33} \\[3pt]
\sqrt{2} a_{21} a_{31} & \sqrt{2} a_{22} a_{32} & \sqrt{2} a_{23} a_{33} & a_{22} a_{31} + a_{21} a_{32} & a_{23} a_{31} + a_{21} a_{33} & a_{23} a_{32} + a_{22} a_{33}
\end{bmatrix}}.
\end{equation}
\begin{table}
\captionsetup{width=.9\linewidth}
    \begin{equation*}
   \begin{array}{|c|cccc|}
   \hline
    \text{index} & \vec{i} & \vec{m}(\vec{i}) & \binom{k}{\vec{m}(\vec{i})} & \Orb(\vec{i}) \\
    \hline
    1 & (1,1) & (2,0,0) & \frac{2!}{2!0!0!} = 1 & \{ (1,1) \} \\[5pt]
    2 & (2,2) & (0,2,0) & \frac{2!}{0!2!0!} = 1 &\{(2,2)\} \\[5pt]
    3 & (3,3) & (0,0,2) & \frac{2!}{0!0!2!} = 1 &\{(3,3)\} \\[5pt]
    4 & (1,2) & (1,1,0) & \frac{2!}{1!1!0!} = 2 &\{(1,2),(2,1)\} \\[5pt]
    5 & (1,3) & (1,0,1) & \frac{2!}{1!0!1!} = 2 &\{(1,3),(3,1)\} \\[5pt]
    6 & (2,3) & (0,1,1) & \frac{2!}{0!1!1!} = 2 &\{(2,3),(3,2)\} \\[3pt]
   \hline
    \end{array}
    \end{equation*}
    \caption{For $n=3$ and $k=2$, there are $N=6$ nondecreasing
    elements of $[n]^k$.}
    \label{Table:n3k2}
\end{table}
\end{example}

%%%%%%%%%%%%%%%%%%%%
\section{Symmetric tensor powers of graphs}\label{Section:Products}

%%%%%%%%%%%%%%%%%%%%%%%%%%%%%%%%%%
Let $\G=(V,E,\omega)$ denote a weighted graph with vertex set $V = \{ v_1,v_2,\ldots,v_n\}$,
edge set $E$, and edge-weight function $\omega$;
nonexistent edges have weight $0$.
The vertices of $\G^{\odot k}$ are the $N = \binom{n+k-1}{k}$ nondecreasing 
elements $\vec{i}=(i_1,i_2,\ldots, i_k)$  of $[n]^k$, each of which we may identify with the
corresponding symmetric tensor $v_{i_1} \odot v_{i_2} \odot \cdots \odot v_{i_k}$ or its
normalization $u_{\vec{i}}$ in \eqref{eq:ONB}.
The edge weights are
\begin{equation}\label{eq:GraphWeight}
\omega^{\odot k} ( \vec{i}, \vec{j})
= \frac{1}{ \sqrt{ \binom{k}{\vec{m}(\vec{i})} \binom{k}{\vec{m}(\vec{j})}} }
\sum_{\substack{ \vec{p} \in \Orb( \vec{i}) \\ \vec{q} \in \Orb( \vec{j}) }} 
\prod_{\ell = 1}^k\omega (v_{p_\ell},v_{q_\ell}).
\end{equation}
The map $\G \mapsto \G^{\odot k}$ is well defined up to graph isomorphism;
see Appendix \ref{Section:WellDefined}.  If $A$ is the adjacency matrix of $\G$, then $A^{\odot k}$
is the adjacency matrix of $\G^{\odot k}$.

\begin{example}\label{Example:Complete}
Let $\G = \K_3$ be the complete graph on three vertices $v_1 , v_2 , v_3$. Its adjacency matrix
$A = \left[\begin{smallmatrix}  \0 & 1 & 1 \\ 1 & \0 & 1 \\ 1 & 1 & \0 \end{smallmatrix}\right]$
acts naturally on the inner-product space $\V$ with orthonormal basis $v_1 ,  v_2 ,  v_3$.
Then $\V^{\otimes 2}$ has orthonormal basis
\begin{equation*}
v_1 \otimes v_1, \,\,   v_1 \otimes v_2, \,\,   v_1 \otimes v_3, \,\,  
  v_2 \otimes v_1, \,\,   v_2 \otimes v_2, \,\,   v_2 \otimes v_3, \,\,  
   v_3 \otimes v_1, \,\,   v_3 \otimes v_2, \,\,   v_3 \otimes v_3.
\end{equation*}
The symmetric group $\S_2$ produces $N = \binom{3 +2-1}{2} = 6$ orbits in $[3]^2$, which yields
{\small
\begin{equation*}
 \{ v_1   \otimes v_1  \}, \,  \{ v_2   \otimes v_2  \}, \,  \{ v_3  \otimes v_3 \}, 
 \{ v_1   \otimes v_2 ,  v_2 \otimes v_1 \},  \{ v_1   \otimes v_3,  v_3 \otimes v_1 \} ,
  \{ v_2   \otimes v_3,  v_3 \otimes v_2  \}.
\end{equation*}
}%
Each orbit contains a nondecreasing representative; these are, respectively,
\begin{equation}
  v_1   \otimes v_1  , \quad   v_2   \otimes v_2  , \quad   v_3  \otimes v_3 , \quad 
  v_1   \otimes v_2  , \quad  v_1   \otimes v_3  ,\quad
   v_2   \otimes v_3  .
\end{equation}
The orthonormal basis provided by Lemma \ref{Lemma:ONB} is
\begin{align*}
  u_{(1,1)} & =   v_1 \odot v_1 ,
  &u_{(2,2)} & =  v_2 \odot  v_2  ,
  &u_{(3,3)} & =  v_3 \odot v_3,\\
  u_{(2,3)} & =  \sqrt{2} ( v_2 \odot v_3 ) ,
  &u_{(1,3)} & = \sqrt{2} ( v_1 \odot v_3)  , 
  &u_{(1,2)} & =   \sqrt{2} ( v_1 \odot  v_2)  ,
\end{align*}
and Lemma \ref{Lemma:MatrixRep} provides the adjacency matrix of $\K_3^{\odot 2}$:
\begin{equation*}
A^{\odot 2} = 
\left[
\begin{smallmatrix}
\0 & 1 & 1 & \0 & \0 & \sqrt{2} \\
1 & \0 & 1 & \0 & \sqrt{2} & \0 \\
1 & 1 & \0 & \sqrt{2} & \0 & \0 \\
\0 & \0 & \sqrt{2} & 1 & 1 & 1 \\
\0 & \sqrt{2} & \0 & 1 & 1 & 1 \\
\sqrt{2} & \0 & \0 & 1 & 1 & 1
\end{smallmatrix}
\right].
\end{equation*}
Note that $\K_3$ (Figure \ref{Figure:K3-1}) is a subgraph of $\K_3^{\odot 2}$ (Figure \ref{Figure:K3-2}).
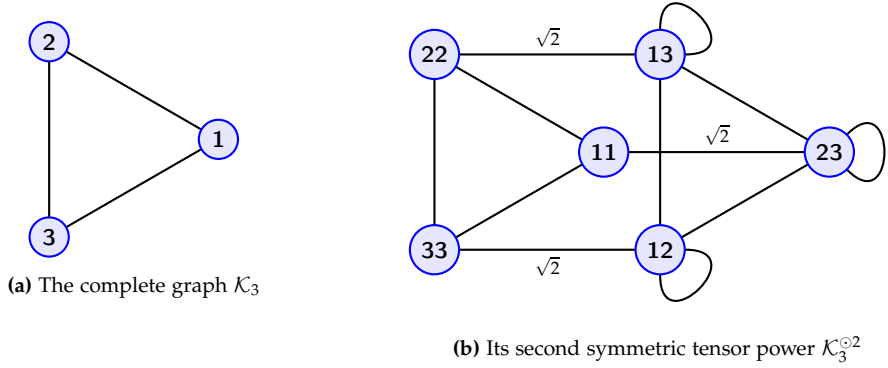
\begin{figure}
    \begin{subfigure}[c]{0.3\textwidth}
	                \centering
                        \begin{tikzpicture}[thick,scale=0.75, every node/.style={scale=0.75},auto, node distance=2cm,  thick,
                           node_style/.style={circle,draw=blue,fill=blue!10!,font=\sffamily\Large\bfseries},
                           edge_style/.style={draw=black,font=\small}]
                        \node[node_style] (v0) at (2,0) {1} ;
                        \node[node_style] (v1) at (-1,1.73205) {2} ;
                        \node[node_style] (v2) at (-1,-1.73205) {3} ;
                        \draw[edge_style]  (v0)--(v1);
                        \draw[edge_style]  (v0)--(v2);
                        \draw[edge_style]  (v1)--(v2);
                        \end{tikzpicture}
                        \caption{The complete graph $\K_3$} 
                        \label{Figure:K3-1}
	        \end{subfigure}
	        \hfill
	        \begin{subfigure}[c]{0.6\textwidth}
	                \centering
                        \begin{tikzpicture}[rotate=0,thick,scale=0.75, every node/.style={scale=0.75},auto, node distance=1.5cm,  thick,
                           node_style/.style={circle,draw=blue,fill=blue!10!,font=\sffamily\Large\bfseries},
                           edge_style/.style={draw=black,font=\small}]
                            \node[node_style] (v0) at (2,0) {11} ;
                            \node[node_style] (v1) at (-1,1.73205) {22} ;
                            \node[node_style] (v2) at (-1,-1.73205) {33} ;
                            \node[node_style] (v3) at (3,-1.73205) {12} ;
                            \node[node_style] (v4) at (3,1.73205) {13} ;
                            \node[node_style] (v5) at (6,0) {23} ;
                            \draw[edge_style]  (v0)--(v1);
                            \draw[edge_style]  (v0)--(v2);
                            \draw[edge_style]  (v0) edge node{$ \sqrt{2} $} (v5);
                            \draw[edge_style]  (v1)--(v2);
                            \draw[edge_style]  (v1) edge node{$ \sqrt{2} $} (v4);
                            \draw[edge_style]  (v3) edge node{$ \sqrt{2} $} (v2);
                            \draw[edge_style]  (v3)--(v4);
                            \draw[edge_style]  (v3)--(v5);
                            \draw[edge_style]  (v4)--(v5);
                            \draw (v3) to [out=0,in=-90,looseness=5] (v3) ;
                            \draw (v4) to [out=0,in=90,looseness=5] (v4) ;
                            \draw (v5) to [out=45,in=-45,looseness=5] (v5) ;
                        \end{tikzpicture}
                        \vspace{-10pt}
                        \caption{Its second symmetric tensor power $\K_3^{\odot 2}$}        
                        \label{Figure:K3-2}      
	        \end{subfigure}
    \captionsetup{width=.95\linewidth}	        
    \caption{The complete graph $\K_3$ and its second symmetric tensor power.  Symmetric tensor powers of unweighted loopless
    graphs can be weighted and may have loops. }
    \label{Figure:Complete}
\end{figure}
\end{example}

\begin{example}\label{Example:Path}
\begin{figure}
\centering		
		\begin{subfigure}[t]{0.125\textwidth}
	                \centering
                       \begin{tikzpicture}[thick,scale=0.75, yscale=0.7, every node/.style={scale=0.75},auto, node distance=1,  thick,
                           node_style/.style={circle,draw=blue,fill=blue!10!,font=\sffamily\Large\bfseries},
                           edge_style/.style={draw=black,font=\small}]
                        
                            \node[node_style] (v1) at (-2,2) {3};
                            \node[node_style] (v2) at (-2,-1) {2};
                            \node[node_style] (v3) at (-2,-4) {1};
                            \draw[edge_style]  (v1)--(v2);
                            \draw[edge_style]  (v2)--(v3);
                            \end{tikzpicture}
                        \caption{$\P_3$}   
                        \label{Figure:Path1}           
       	        \end{subfigure}
	        \quad
       	        \begin{subfigure}[t]{0.4\textwidth}
	                \centering
                        \begin{tikzpicture}[thick,scale=0.75, every node/.style={scale=0.75},auto, node distance=2.5,  thick,
                           node_style/.style={circle,draw=blue,fill=blue!10!,font=\sffamily\Large\bfseries},
                           edge_style/.style={draw=black,font=\small}]
                        
                            \node[node_style] (v1) at (-2,-2) {11};
                            \node[node_style, circle,draw=green,fill=green!10] (v2) at (0,-2) {12};
                            \node[node_style, circle,draw=orange,fill=orange!10] (v3) at (2,-2) {13};
                            \node[node_style, circle,draw=green,fill=green!10] (v4) at (-2,0) {21};
                            \node[node_style] (v5) at (0,0) {22};
                            \node[node_style, circle,draw=red,fill=red!10!] (v6) at (2,0) {23};
                            \node[node_style, circle,draw=orange,fill=orange!10!] (v7) at (-2,2) {31};
                            \node[node_style, circle,draw=red,fill=red!10!] (v8) at (0,2) {32};
                            \node[node_style] (v9) at (2,2) {33};
                            \draw[edge_style]  (v1)--(v5);
                            \draw[edge_style]  (v5)--(v9);  
                            \draw[edge_style]  (v5)--(v3);
                            \draw[edge_style]  (v5)--(v7);
                            \draw[edge_style]  (v2)--(v6);
                            \draw[edge_style]  (v2)--(v4);
                            \draw[edge_style]  (v8)--(v4);
                            \draw[edge_style]  (v8)--(v6);                            
                            \end{tikzpicture}
                        \caption{$\P_3^{\otimes 2}$}              
                        \label{Figure:Path2}           
	        \end{subfigure}
	        \quad
	        \begin{subfigure}[t]{0.4\textwidth}
	                \centering
                       \begin{tikzpicture}[thick,scale=0.75, every node/.style={scale=0.75},auto, thick,
                           node_style/.style={circle,draw=blue,fill=blue!10!,font=\sffamily\Large\bfseries},
                           edge_style/.style={draw=black,font=\small}]
                        
                            \node[node_style] (v11) at (-2,1.9) {11};
                            \node[node_style] (v22) at (-2,0) {22};
                      \node[node_style] (v33) at (-2,-1.9) {33};      
                      \node[node_style,draw=green,fill=green!10] (v12) at (2.25,-1.25) {12};
                      \node[node_style,draw=orange,fill=orange!10] (v13) at (0.5,0) {13};
                      \node[node_style,draw=red,fill=red!10] (v23) at (2.25,1.25) {23};      
                      \draw[edge_style]  (v11)--(v22);
                      \draw[edge_style]  (v22)--(v33);                      
                      \draw[edge_style]  (v22) edge node{$\sqrt{2}$} (v13);
                      \draw[edge_style]  (v12)--(v23);                           
                            \draw (v12) to [out=180+45,in=180-45,looseness=5] (v12) ;
                            \draw (v23) to [out=180+45,in=180-45,looseness=5] (v23) ;
                            \end{tikzpicture}
                        \caption{$\P_3^{\odot 2}$}              
                        \label{Figure:Path3}           
	        \end{subfigure}
                 \caption{The symmetric tensor power $\P_3^{\odot 2}$ of the path graph $\P_3$
                 is obtained by identifying vertices in the tensor (Kronecker) power $\P_3^{\otimes 2}$
                 and adding the appropriate weights. Edge weights equal to $1$ are suppressed.}              
    \label{Figure:Compress}
\end{figure}
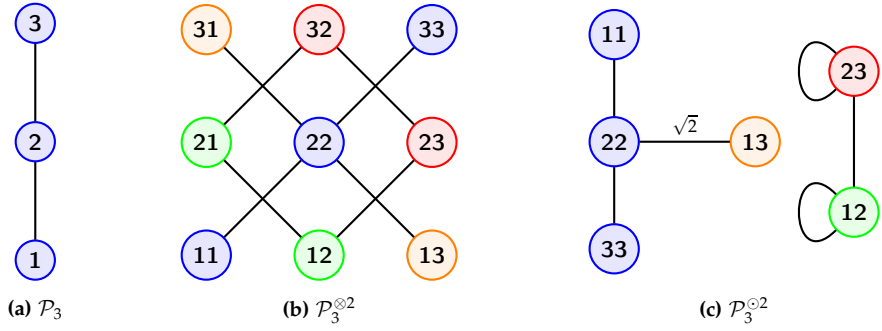
Let $\P_3$ denote the path graph on three vertices (Figure \ref{Figure:Path1}).
Its second symmetric tensor power $\P_3^{\odot 2}$ (Figure \ref{Figure:Path3}) 
is obtained from the tensor (Kronecker) power $\P^{\otimes 2}$ (Figure \ref{Figure:Path2})
by identifying ordered pairs in the same $\S_2$ orbit, selecting nondecreasing representatives, 
and adding weights according to \eqref{eq:GraphWeight}.
\end{example}

Examples \ref{Example:Complete} and \ref{Example:Path} suggest that an (unweighted) graph
is a subgraph of its symmetric tensor powers.  The next theorem establishes this fact.

\begin{theorem}\label{Theorem:Subgraph}
If $\G$ is an unweighted graph and $k \in \N$, then $\G$ is a subgraph of $\G^{\odot k}.$
\end{theorem}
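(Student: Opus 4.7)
The plan is to exhibit an explicit embedding of $\G$ into $\G^{\odot k}$ by sending each vertex $v_i$ of $\G$ to the ``diagonal'' vertex of $\G^{\odot k}$ represented by the nondecreasing $k$-tuple $\vec{i} = (i,i,\ldots,i) \in [n]^k$ (equivalently, the normalized symmetric tensor $u_{\vec{i}} = v_i \odot v_i \odot \cdots \odot v_i$). I would then verify that this map preserves adjacency with the correct weights by directly evaluating the edge-weight formula \eqref{eq:GraphWeight}.

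The key observation is that diagonal tuples are fixed by every permutation. Concretely, for $\vec{i} = (i,\ldots,i)$ the multiplicity vector $\vec{m}(\vec{i})$ has a single $k$ in position $i$ and zeros elsewhere, so $\binom{k}{\vec{m}(\vec{i})} = 1$, and the orbit $\Orb(\vec{i}) = \{\vec{i}\}$ is a singleton. The same holds for $\vec{j} = (j,\ldots,j)$. Substituting into \eqref{eq:GraphWeight} collapses the double sum to a single product, yielding
\begin{equation*}
    \omega^{\odot k}(\vec{i}, \vec{j}) \;=\; \prod_{\ell=1}^{k} \omega(v_i, v_j) \;=\; \omega(v_i, v_j)^k.
\end{equation*}
Because $\G$ is unweighted, $\omega(v_i, v_j) \in \{0,1\}$, and therefore $\omega(v_i, v_j)^k = \omega(v_i, v_j)$ for every $i,j$ and every $k \in \N$.

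Thus the induced subgraph of $\G^{\odot k}$ on the $n$ diagonal vertices $\{(i,i,\ldots,i) : i \in [n]\}$ has exactly the same adjacency structure and the same edge weights as $\G$, giving an (induced) subgraph isomorphic to $\G$. I expect no serious obstacle here: the only point that requires care is confirming that $u_{(i,\ldots,i)}$ is the correct representative and that the multinomial coefficient and orbit both reduce to $1$, after which the computation is a one-line substitution into Lemma~\ref{Lemma:MatrixRep} (equivalently \eqref{eq:GraphWeight}). The step that uses the unweighted hypothesis in an essential way is the identity $\omega(v_i,v_j)^k = \omega(v_i,v_j)$; this is precisely where the argument would fail for weighted graphs, since in general a weight $w$ with $|w| \neq 0,1$ satisfies $w^k \neq w$.
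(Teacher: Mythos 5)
Your proposal is correct and follows essentially the same route as the paper: both map each vertex $v_i$ to the diagonal tuple $(i,i,\ldots,i)$, observe that the orbit is a singleton and the multinomial coefficient is $1$, and conclude via $\omega(v_i,v_j)^k = \omega(v_i,v_j)$ for $\{0,1\}$-valued weights. No gaps.
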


\begin{proof}
Let $\G$ be an unweighted graph on $n$ vertices, with adjacency matrix $A$.
Let $\vec{i} = (i,i,\ldots,i) \in [n]^k$ and $\vec{j} = (j,j,\ldots,j) \in [n]^k$.
Then $\vec{m}(\vec{i})$ and $\vec{m}(\vec{j})$ are $k$ times the standard basis vectors in $\R^n$
that have their $1$s in the $i$th and $j$th positions, respectively.  
Since $\Orb( \vec{i} ) = \{ \vec{i}\}$ and $\Orb( \vec{j} ) = \{ \vec{j} \}$,
Proposition \ref{Lemma:MatrixRep} ensures that
\begin{equation*}
[A^{\odot k}]_{\vec{i}, \vec{j}}
= \frac{1}{ \sqrt{ \binom{k}{\vec{m}(\vec{i})} \binom{k}{\vec{m}(\vec{j})}} }
    \sum_{\substack{ \vec{p} \in \Orb( \vec{i}) \\ \vec{q} \in \Orb( \vec{j}) }} 
        \prod_{\ell=1}^{k} [A]_{p_{\ell}, q_{\ell}} 
=     \prod_{\ell=1}^{k} [A]_{i,j} = [A]_{i,j}
\end{equation*}
because the matrix entries of $A$ belong to $\{0,1\}$.
\end{proof}

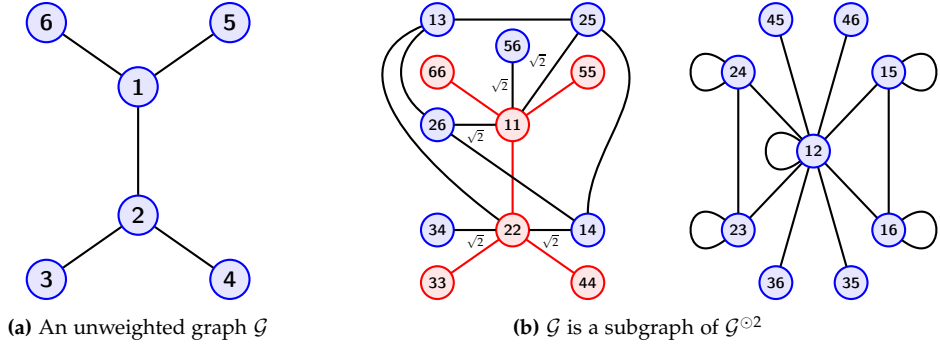
\begin{figure}    
\centering
\begin{subfigure}[t]{0.275\textwidth}\centering
    \begin{tikzpicture}[thick,scale=1.22, yscale=0.7, every node/.style={scale=0.75},auto, node distance=2cm,  thick,                           node_style/.style={circle,draw=blue,fill=blue!10!,font=\sffamily\Large\bfseries},                          edge_style/.style={draw=black,font=\small}]
    \node[node_style] (v0) at (0,1) {1} ;
    \node[node_style] (v1) at (0,-1) {2} ;
    \node[node_style] (v2) at (-1,-2) {3} ;
    \node[node_style] (v3) at (1,-2) {4} ;
    \node[node_style] (v4) at (1,2) {5} ;
    \node[node_style] (v5) at (-1,2) {6} ;
    \draw[edge_style]  (v0)--(v1);
    \draw[edge_style]  (v0)--(v4);
    \draw[edge_style]  (v0)--(v5);
    \draw[edge_style]  (v1)--(v2);
    \draw[edge_style]  (v1)--(v3);
\end{tikzpicture} 
\caption{An unweighted graph $\G$}                         
\label{Figure:Barbell1}	        
\end{subfigure}
\hfill
\begin{subfigure}[t]{0.675\textwidth}\centering
    \begin{tikzpicture}[thick,scale=1.0, yscale=0.7,every node/.style={scale=0.5},auto, node distance=2cm,  thick,                           node_style/.style={circle,draw=blue,fill=blue!10!,font=\sffamily\Large\bfseries},                          edge_style/.style={draw=black,font=\small}]
    \node[node_style, fill=red!10, draw=red] (v0) at (0,1) {11} ;
    \node[node_style, fill=red!10, draw=red] (v1) at (0,-1) {22} ;
    \node[node_style, fill=red!10, draw=red] (v2) at (-1,-2) {33} ;
    \node[node_style, fill=red!10, draw=red] (v3) at (1,-2) {44} ;
    \node[node_style, fill=red!10, draw=red] (v4) at (1,2) {55} ;
    \node[node_style, fill=red!10, draw=red] (v5) at (-1,2) {66} ;
    \node[node_style] (v6) at (4,0.5) {12} ;
    \node[node_style] (v7) at (-1,3) {13} ;
    \node[node_style] (v8) at (1,-1) {14} ;
    \node[node_style] (v9) at (5,2) {15} ;
    \node[node_style] (v10) at (5,-1) {16} ;
    \node[node_style] (v11) at (3,-1) {23} ;
    \node[node_style] (v12) at (3,2) {24} ;
    \node[node_style] (v13) at (1,3) {25} ;
    \node[node_style] (v14) at (-1,1) {26} ;
    \node[node_style] (v15) at (-1,-1) {34} ;
    \node[node_style] (v16) at (4.5,-2) {35} ;
    \node[node_style] (v17) at (3.5,-2) {36} ;
    \node[node_style] (v18) at (3.5,3) {45} ;
    \node[node_style] (v19) at (4.5,3) {46} ;
    \node[node_style] (v20) at (0,2.5) {56} ;
    \draw[edge_style, draw=red]  (v0)--(v1);
    \draw[edge_style, draw=red]  (v0)--(v4);
    \draw[edge_style]  (v0) edge node{$ \sqrt{2} $} (v20);
    \draw[edge_style, draw=red]  (v0)--(v5);
    \draw[edge_style]  (v0) edge node{$ \sqrt{2} $} (v13);
    \draw[edge_style]  (v0) edge node{$ \sqrt{2} $} (v14);
    \draw[edge_style, draw=red]  (v1)--(v2);
    \draw[edge_style, draw=red]  (v1)--(v3);
    \draw (v7) to [out=210, in=135, looseness=1] (v1);
    \draw[edge_style]  (v8) edge node{$ \sqrt{2} $} (v1);
    \draw[edge_style]  (v1) edge node{$ \sqrt{2} $} (v15);
    \draw[edge_style]  (v6)--(v16);
    \draw[edge_style]  (v6)--(v17);
    \draw[edge_style]  (v6)--(v18);
    \draw[edge_style]  (v6)--(v19);
    \draw[edge_style]  (v6)--(v9);
    \draw[edge_style]  (v6)--(v10);
    \draw[edge_style]  (v6)--(v11);
    \draw[edge_style]  (v6)--(v12);
    \draw[edge_style]  (v7)--(v13);
    \draw (v7) to [out=-135, in=135, looseness=1] (v14);
    \draw (v8) to [out=90, in=-45, looseness=1] (v13);
    \draw[edge_style]  (v8)--(v14);
    \draw[edge_style]  (v9)--(v10);
    \draw[edge_style]  (v11)--(v12);
    \draw (v6) to [out=135,in=-135,looseness=5] (v6) ;
    \draw (v9) to [out=45,in=-45,looseness=5] (v9) ;
    \draw (v10) to [out=45,in=-45,looseness=5] (v10) ;
    \draw (v11) to [out=135,in=-135,looseness=5] (v11) ;
    \draw (v12) to [out=135,in=-135,looseness=5] (v12) ;
\end{tikzpicture} 
\caption{$\G$ is a subgraph of $\G^{\odot 2}$}                         
\label{Figure:Barbell2}	        
\end{subfigure}
\caption{Illustration of Theorem \ref{Theorem:Subgraph}.}    
\label{Figure:Barbell}
\end{figure}

The previous theorem is illustrated in Figures \ref{Figure:Barbell} and \ref{Figure:Scepter}.

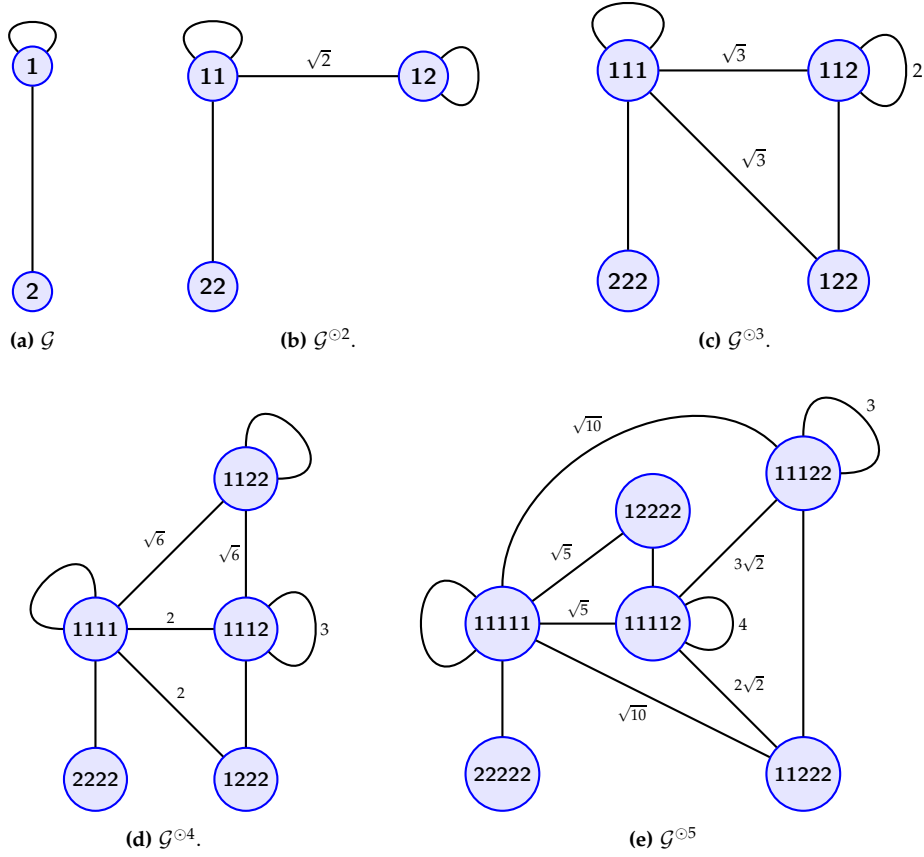
\begin{figure}[h]
\centering
		\begin{subfigure}[t]{0.125\textwidth}
	                \centering
                        \begin{tikzpicture}[thick,scale=0.75, every node/.style={scale=0.75},auto, thick,
                           node_style/.style={circle,draw=blue,fill=blue!10!,font=\sffamily\Large\bfseries},
                           edge_style/.style={draw=black,font=\small}]                        
                            \node[node_style] (v1) at (0,2) {1};
                            \node[node_style] (v2) at (0,-2) {2};
                            \draw[edge_style]  (v1)--(v2);
                            \draw[black] (v1) to [out=45,in=135,looseness=5] (v1) ;
                            \end{tikzpicture}
                        \caption{$\G$}
                        \label{Figure:Scepter1}              
	        \end{subfigure}                	
\hfill     
		\begin{subfigure}[t]{0.4\textwidth}
	                \centering
                       \begin{tikzpicture}[thick,scale=0.7, every node/.style={scale=0.75},auto, thick,
                           node_style/.style={circle,draw=blue,fill=blue!10!,font=\sffamily\Large\bfseries},
                           edge_style/.style={draw=black,font=\small}]
                        
                            \node[node_style] (v1) at (-2,2) {11};
                            \node[node_style] (v2) at (-2,-2) {22};
                            \node[node_style] (v3) at (2,2) {12};
                            \draw[edge_style]  (v1)--(v2);
                            \draw[edge_style]  (v1) edge node{$\sqrt{2}$} (v3);

                            \draw[black] (v1) to [out=45,in=135,looseness=5] (v1) ;
                            \draw[black] (v3) to [out=45,in=-45,looseness=5] (v3) ;
                            \end{tikzpicture}
                        \caption{$\G^{\odot 2}$.}              
                        \label{Figure:Scepter2}              
       	        \end{subfigure}
\hfill	        
\begin{subfigure}[t]{0.4\textwidth}	                
    \centering                        
    \begin{tikzpicture}[thick,scale=0.7, every node/.style={scale=0.75},auto, thick,                           node_style/.style={circle,draw=blue,fill=blue!10!,font=\sffamily\Large\bfseries},                          edge_style/.style={draw=black,font=\small}]
    \node[node_style] (v0) at (-2,2) {111} ;
    \node[node_style] (v1) at (-2,-2) {222} ;
    \node[node_style] (v2) at (2,2) {112} ;
    \node[node_style] (v3) at (2,-2) {122} ;
    \draw[edge_style]  (v0)--(v1);
    \draw[edge_style]  (v0) edge node{$ \sqrt{3} $} (v2);
    \draw[edge_style]  (v0) edge node{$ \sqrt{3} $} (v3);
    \draw[edge_style]  (v2)--(v3);
    \draw[black] (v0) to [out=135,in=45,looseness=5] (v0) ;
    \draw[black] (v2) to [out=45,in=-45,looseness=5]  (v2) ;
    \node at (3.5,2){$2$};
    \end{tikzpicture} 
    \caption{$\G^{\odot 3}$.}              
    \label{Figure:Scepter3}              
\end{subfigure}
\\[-10pt]
\begin{subfigure}[t]{0.4\textwidth}	                
    \centering                        
    \begin{tikzpicture}[scale=0.5, every node/.style={scale=0.65},auto, node distance=2,  thick,                           node_style/.style={circle,draw=blue,fill=blue!10!,font=\sffamily\Large\bfseries},                          edge_style/.style={draw=black,font=\small}]
    \node[node_style] (v0) at (-2,2) {1111} ;
    \node[node_style] (v1) at (-2,-2) {2222} ;
    \node[node_style] (v2) at (2,2) {1112} ;
    \node[node_style] (v3) at (2,6) {1122} ;
    \node[node_style] (v4) at (2,-2) {1222} ;
    \draw[edge_style]  (v0)--(v1);
    \draw[edge_style]  (v0) edge node{$2$} (v2);
    \draw[edge_style]  (v0) edge node{$\sqrt{6}$} (v3);
    \draw[edge_style]  (v0) edge node{$2$} (v4);
    \draw[edge_style]  (v2) edge node{$\sqrt{6}$} (v3);
    \draw[edge_style]  (v2)--(v4);
    \draw[black] (v0) to [out=90,in=180,looseness=5] (v0) ;
    \draw[black] (v2) to [out=45,in=-45,looseness=5] (v2) ;
    \node at (4.1,2){$3$};
    \draw[black] (v3) to [out=0,in=90,looseness=5] (v3) ;
    \end{tikzpicture} 
    \caption{$\G^{\odot 4}$.}              
    \label{Figure:Scepter4}              
\end{subfigure}	        
\hfill
\begin{subfigure}[t]{0.55\textwidth}
    \centering
    \begin{tikzpicture}[thick,scale=0.5, every node/.style={scale=0.65},auto, node distance=2,  thick,                           node_style/.style={circle,draw=blue,fill=blue!10!,font=\sffamily\Large\bfseries},                          edge_style/.style={draw=black,font=\small}]
    \node[node_style] (v0) at (-2,2) {11111} ;
    \node[node_style] (v1) at (-2,-2) {22222} ;
    \node at (4.4,2){$4$};
    \node[node_style] (v2) at (2,2) {11112} ;
    \node[node_style] (v3) at (6,6) {11122} ;
    \node at (7.8,7.8){$3$};
    \node[node_style] (v4) at (6,-2) {11222} ;
    \node[node_style] (v5) at (2,5) {12222} ;
    \draw[edge_style]  (v0)--(v1);
    \draw[edge_style]  (v0) edge node{$ \sqrt{5} $} (v2);
    \draw[edge_style, in=135, out=90, looseness=1]  (v0) edge node{$ \sqrt{10} $} (v3);
    \draw[edge_style]  (v4) edge node{$ \sqrt{10} $} (v0);
    \draw[edge_style]  (v0) edge node{$ \sqrt{5} $} (v5);
    \draw[edge_style]  (v3) edge node{$3\sqrt{2}$} (v2);
    \draw[edge_style]  (v2) edge node{$2\sqrt{2}$} (v4);
    \draw[edge_style]  (v2)--(v5);
    \draw[edge_style]  (v3)--(v4);
    \draw[black] (v0) to [out=135,in=-135,looseness=5] (v0) ;
    \draw[black] (v2) to [out=30,in=-30,looseness=5] (v2) ;
    \draw[black] (v3) to [out=0,in=90,looseness=5] (v3) ;
    \end{tikzpicture} 
    \caption{$\G^{\odot 5}$}   
    \label{Figure:Scepter5}               
\end{subfigure}	        
    \captionsetup{width=.95\linewidth}	        
\caption{Symmetric tensor powers of the ``scepter'' graph $\G$;
edge weights with value $1$ are suppressed.  Theorem \ref{Theorem:Subgraph} ensures that $\G$ appears
as a subgraph in $\G^{\odot k}$ for $k \in \N$.}                
\label{Figure:Scepter}
\end{figure}

\begin{proposition}
    (a) Let $\G$ be a graph that contains a loop, then as unweighted graphs 
    $\G^{\odot k_1}$ is a subgraph of $\G^{\odot k_2}$ for all $k_1\leq k_2$.
    (b) Let $\G$ be a nonempty graph, then as unweighted graphs $\G^{\odot k_1}$ is a subgraph of $\G^{\odot k_1+2\cdot \ell}$ for all $\ell \geq 0$.
\end{proposition}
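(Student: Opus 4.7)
The plan is to construct, for each part, an injective map $\phi$ on vertex sets and verify that edges are sent to edges; this suffices to give a subgraph embedding of the underlying unweighted graphs. The essential observation is that, since $\G$ is unweighted, every factor $[A]_{p_\ell,q_\ell}$ in the sum of Lemma \ref{Lemma:MatrixRep} lies in $\{0,1\}$. Consequently, $\vec{i}$ and $\vec{j}$ are adjacent in $\G^{\odot k}$ precisely when there is some pair $\vec{p}\in\Orb(\vec{i})$, $\vec{q}\in\Orb(\vec{j})$ with $[A]_{p_\ell,q_\ell}=1$ for every $\ell\in[k]$. Hence, to certify adjacency in the enlarged power it suffices to exhibit a single witness pair in the enlarged orbits.

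For part (a), fix a vertex $v_s$ with $[A]_{s,s}=1$. Define $\phi(\vec{i})$ to be the unique nondecreasing tuple in $[n]^{k_2}$ obtained by inserting $k_2-k_1$ extra copies of $s$ into $\vec{i}$. Injectivity is immediate, as $\phi$ simply increases the multiplicity of $s$ by $k_2-k_1$ while preserving all other multiplicities. Given a witness $(\vec{p},\vec{q})$ for adjacency of $\vec{i},\vec{j}$ in $\G^{\odot k_1}$, take $\vec{p}':=(\vec{p},s,\ldots,s)$ and $\vec{q}':=(\vec{q},s,\ldots,s)$ with $k_2-k_1$ trailing copies of $s$. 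A multiplicity comparison shows $\vec{p}'\in\Orb(\phi(\vec{i}))$ and $\vec{q}'\in\Orb(\phi(\vec{j}))$, and $\prod_{\ell=1}^{k_2}[A]_{p'_\ell,q'_\ell}=1\cdot [A]_{s,s}^{k_2-k_1}=1$, so the extended vertices are adjacent in $\G^{\odot k_2}$.

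For part (b), if $\G$ has a loop, part (a) applies directly with $k_2=k_1+2\ell$. Otherwise, nonemptiness of $\G$ provides an edge $\{v_s,v_t\}$ with $s\neq t$. Let $\phi(\vec{i})$ be the nondecreasing tuple obtained from $\vec{i}$ by adding $\ell$ copies each of $s$ and of $t$; this is again injective. Extend a witness $(\vec{p},\vec{q})$ to $\vec{p}':=(\vec{p},s,\ldots,s,t,\ldots,t)$ and $\vec{q}':=(\vec{q},t,\ldots,t,s,\ldots,s)$ with $\ell$ copies each of $s$ and $t$ appended in opposite orders. A multiplicity check again confirms that $\vec{p}',\vec{q}'$ lie in $\Orb(\phi(\vec{i})),\Orb(\phi(\vec{j}))$, and each of the $2\ell$ new factors equals $[A]_{s,t}=[A]_{t,s}=1$, so the total product is $1$. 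The only mildly subtle step in either part is the multiplicity bookkeeping needed to verify that the extended tuples really belong to the intended orbits; this is routine once the construction is fixed, and no substantial obstacle is anticipated.
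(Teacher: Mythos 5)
Your proposal is correct and follows essentially the same route as the paper: pad each vertex tuple with extra copies of a looped vertex (part (a)) or with $\ell$ copies of each endpoint of a fixed edge (part (b)), then check that adjacency is preserved. The paper states the padded vertices are adjacent without further comment, whereas you make explicit the witness-pair characterization of adjacency from Lemma \ref{Lemma:MatrixRep} and the multiplicity bookkeeping; this is just a more detailed write-up of the identical argument.
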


\begin{proof}
    (a) We may assume the loop is at vertex $v_1$. 
    Notice that if $v_i$ is adjacent to $v_j$ in $\G^{\odot k_1}$, then $\underbrace{v_1 \odot v_1 \odot \cdots \odot v_1}_{k_2-k_1} \odot v_i$ 
    and $\underbrace{v_1 \odot v_1 \odot \cdots \odot v_1}_{k_2-k_1} \odot v_j$ are adjacent.
    
    \noindent
    (b) We may assume $v_1$ and $v_2$ are adjacent in $\G$. 
    If $v_i$ is adjacent to $v_j$ in $\G^{\odot k_1}$, then $\underbrace{v_1 \odot v_2\odot \cdots \odot v_1\odot v_2}_{\frac{k_2-k_1}{2}}\odot v_i$ and $\underbrace{v_1\odot v_2\odot \cdots \odot v_1\odot v_2}_{\frac{k_2-k_1}{2}}\odot v_j$ are adjacent. 
\end{proof}

%%%%%%%%%%%%%%%%%%%%%%
\section{Compatibility with graph spectra}\label{Section:Spectrum}

The \emph{spectrum} of a graph is the spectrum (multiset of eigenvalues) of its adjacency matrix.  This is
well defined since isomorphic graphs correspond to permutation-similar adjacency matrices,
and similar matrices have the same the spectrum.
Symmetric tensor powers respect graph spectra.  Although the next result is familiar
to representation theorists and mathematical physicists,
we prove it for the sake of graph theorists to whom it may be novel.
%\comment{I need to find references for this.}%

\begin{theorem}\label{Theorem:Eigenvalues}
Let $\G$ be a graph on $n$ vertices whose adjacency matrix $A$ has spectrum $\lambda_1,\lambda_2,\ldots,\lambda_n$.
Then the spectrum of $\G^{\odot k}$ consists of the $\binom{n+k-1}{k}$ products
$\lambda_{i_1}\lambda_{i_2}\cdots \lambda_{i_k}$ with $1 \leq i_1 \leq i_2 \leq \cdots \leq i_k \leq n$;
these are all of the eigenvalues of $A^{\odot k}$.
\end{theorem}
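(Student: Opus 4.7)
The plan is to diagonalize $A$ and then observe that symmetric tensor products of eigenvectors of $A$ are themselves eigenvectors of $A^{\odot k}$, with the expected product eigenvalues. Since the adjacency matrix $A$ is real symmetric, the spectral theorem furnishes an orthonormal basis $w_1, w_2, \ldots, w_n$ of $\V$ with $Aw_j = \lambda_j w_j$. The construction of $\V^{\odot k}$ and the definition \eqref{eq:SymADefi} of $A^{\odot k}$ are coordinate free (they refer only to tensor products and to the action of $A$), so nothing prevents us from repeating the setup of Section \ref{Section:Linear} with $\{w_j\}$ in place of $\{v_j\}$.

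The first step is to invoke Lemma \ref{Lemma:ONB} relative to the orthonormal basis $\{w_j\}$: this gives an orthonormal basis of $\V^{\odot k}$ consisting of the normalized symmetric tensors
\begin{equation*}
u^{w}_{\vec{i}} := \binom{k}{\vec{m}(\vec{i})}^{1/2} w_{i_1} \odot w_{i_2} \odot \cdots \odot w_{i_k},
\end{equation*}
indexed by the $N = \binom{n+k-1}{k}$ nondecreasing tuples $\vec{i} = (i_1, i_2, \ldots, i_k) \in [n]^k$.

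The second step is a direct computation using \eqref{eq:SymADefi}. For each such $\vec{i}$,
\begin{equation*}
A^{\odot k}\bigl(w_{i_1} \odot \cdots \odot w_{i_k}\bigr)
= \frac{1}{k!}\sum_{\sigma \in \S_k} (Aw_{i_{\sigma(1)}}) \otimes \cdots \otimes (Aw_{i_{\sigma(k)}})
= \frac{1}{k!}\sum_{\sigma \in \S_k} \Bigl(\prod_{\ell=1}^{k} \lambda_{i_{\sigma(\ell)}}\Bigr) w_{i_{\sigma(1)}} \otimes \cdots \otimes w_{i_{\sigma(k)}}.
\end{equation*}
The scalar $\prod_{\ell} \lambda_{i_{\sigma(\ell)}}$ equals $\lambda_{i_1} \lambda_{i_2} \cdots \lambda_{i_k}$ for every $\sigma \in \S_k$ because multiplication is commutative, so it pulls out of the sum and we obtain $A^{\odot k} u^{w}_{\vec{i}} = \lambda_{i_1} \lambda_{i_2} \cdots \lambda_{i_k} \, u^{w}_{\vec{i}}$.

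The third step is bookkeeping: we have exhibited $N$ orthonormal eigenvectors of $A^{\odot k}$ in the $N$-dimensional space $\V^{\odot k}$, so this list accounts for the entire spectrum, with multiplicities, of $A^{\odot k}$. There is no real obstacle here; the only subtle point is the conceptual one, namely that \eqref{eq:SymADefi} and Lemma \ref{Lemma:ONB} are basis-independent and therefore may be applied to the eigenbasis of $A$ rather than to the standard basis. Once that observation is in hand, the argument is essentially forced.
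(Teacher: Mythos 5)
Your proposal is correct and follows essentially the same route as the paper: symmetric tensors of eigenvectors of $A$ are eigenvectors of $A^{\odot k}$ with the product eigenvalues, and counting dimensions shows these exhaust the spectrum. If anything, you are more careful than the paper, which silently reuses the vertex basis symbols $v_i$ as if they were eigenvectors of $A$; your explicit appeal to the spectral theorem and to the basis-independence of \eqref{eq:SymADefi} and Lemma \ref{Lemma:ONB} supplies the justification the paper leaves implicit.
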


\begin{proof}
Let $\G$ have vertices $v_1 , v_2 ,\ldots,v_n $.
For $1 \leq i_1 \leq i_2 \leq \cdots \leq i_k \leq n$, 
\begin{align*}
A^{\odot k}(v_{i_1} \odot v_{i_2} \odot \cdots \odot v_{i_k} )
&= \frac{1}{k!} \sum_{\sigma \in \S_k} (Av_{i_{\sigma(1)}}) \otimes (Av_{i_{\sigma(2)}}) \otimes \cdots \otimes (Av_{i_{\sigma(k)}}) \\
&= \frac{1}{k!} \sum_{\sigma \in \S_k} (\lambda_{i_1} v_{i_{\sigma(1)}}) \otimes (\lambda_{i_2} v_{i_{\sigma(2)}} )\otimes \cdots \otimes (\lambda_{i_k} v_{i_{\sigma(k)}} )\\
&= \lambda_{i_1}\lambda_{i_2}\cdots \lambda_{i_k} \bigg(\frac{1}{k!} \sum_{\sigma \in \S_k} v_{i_{\sigma(1)}} \otimes v_{i_{\sigma(2)}} \otimes \cdots \otimes v_{i_{\sigma(k)}} \bigg)\\
&= \lambda_{i_1}\lambda_{i_2}\cdots \lambda_{i_k}(v_{i_1} \odot v_{i_2} \odot \cdots \odot v_{i_k}).
\end{align*}
Thus, $v_{i_1} \odot v_{i_2} \odot \cdots \odot v_{i_k}$ is an eigenvector of $A^{\odot k}$ with eigenvalue
$\lambda_{i_1}\lambda_{i_2}\cdots \lambda_{i_k}$.  
There are no others since the $v_{i_1} \odot v_{i_2} \odot \cdots \odot v_{i_k}$ form 
 a basis for $\V^{\odot k}$.
\end{proof}

\begin{example}\label{Example:Fibonacci}
Figure \ref{Figure:Scepter} illustrates the graphs $\G^{\odot k}$ for $k=2,3,4,5$, in which 
$\G$ is the scepter graph (Figure \ref{Figure:Scepter1}).  The corresponding adjacency matrices are
\begin{equation*}
A = \displaystyle \minimatrix{1}{1}{1}{\0},
\quad
A^{\odot 2} =
\begin{bmatrix}
 1 & 1 & \sqrt{2} \\
 1 & \0 & \0 \\
 \sqrt{2} & \0 & 1 \\
\end{bmatrix},
\quad
A^{\odot 3} =\left[
\begin{smallmatrix}
 1 & 1 & \sqrt{3} & \sqrt{3} \\
 1 & \0 & \0  & \0\\
 \sqrt{3} & \0 & 2 & 1 \\
 \sqrt{3} & \0 & 1 & \0
\end{smallmatrix}\right],
\end{equation*}
\begin{equation*}
A^{\odot 4}=
\left[
\begin{smallmatrix}
 1 & 1 & 2 & \sqrt{6} & 2 \\
 1 & \0 & \0 & \0 & \0 \\
 2 & \0 & 3 & \sqrt{6} & 1 \\
 \sqrt{6} & 0 & \sqrt{6} & 1 & \0 \\
 2 & \0 & 1 & \0 & \0 \\
\end{smallmatrix}
\right],
\quad
A^{\odot 5}=
\left[
\begin{smallmatrix}
 1 & 1 & \sqrt{5} & \sqrt{10} & \sqrt{10} & \sqrt{5} \\
 1 & \0 & \0 & \0 & \0 & \0 \\
 \sqrt{5} & \0 & 4 & 3 \sqrt{2} & 2 \sqrt{2} & 1 \\
 \sqrt{10} & \0 & 3 \sqrt{2} & 3 & 1 & \0 \\
 \sqrt{10} & \0 & 2 \sqrt{2} & 1 & \0 & \0 \\
 \sqrt{5} & \0 & 1 & \0 & \0 & \0 \\
\end{smallmatrix}
\right].
\end{equation*}
The respective graph spectra are
\begin{align*}
\G: &\quad \tfrac{1}{2}(1-\sqrt{5}),\quad \tfrac{1}{2}(1+\sqrt{5}), \\
\G^{\odot 2}: &\quad -1,\quad\tfrac{1}{2} (3-\sqrt{5}),\quad\tfrac{1}{2} (\sqrt{5}+3), \\
\G^{\odot 3}: &\quad \tfrac{1}{2} (-1-\sqrt{5}),\quad  2-\sqrt{5},  \quad \tfrac{1}{2} (-1+\sqrt{5}), \quad \sqrt{5}+2 , \\
\G^{\odot 4}: &\quad 1,\quad\tfrac{1}{2} (7-3 \sqrt{5}),\quad\tfrac{1}{2} (-\sqrt{5}-3),\quad
\tfrac{1}{2} (\sqrt{5}-3),\quad\tfrac{1}{2} (3 \sqrt{5}+7), \\
\G^{\odot 5}: &\quad\tfrac{1}{2} (11-5 \sqrt{5}),\,\,-\sqrt{5}-2,\,\,\tfrac{1}{2} (1-\sqrt{5}),\,\,\sqrt{5}-2,\,\,\tfrac{1}{2} (\sqrt{5}+1),\,\,\tfrac{1}{2} (5 \sqrt{5}+11).
\end{align*}
These agree with the results  of Theorem \ref{Theorem:Eigenvalues}.
The eigenvalues $\lambda_1 = \frac{1}{2}(1-\sqrt{5})$ and $\lambda_2 = \frac{1}{2}(1+\sqrt{5})$ of $A$
give rise to the eigenvalues of $A^{\odot 2}$:
\begin{equation*}
\lambda_1^2 = \tfrac{1}{2} (3-\sqrt{5}),\qquad
\lambda_1 \lambda_2 = -1, \qquad \text{and} \qquad
\lambda_2^2 = \tfrac{1}{2} (3+\sqrt{5}).
\end{equation*}
A similar calculation shows that  the eigenvalues of $A^{\odot 3}$ are 
\begin{equation*}
\lambda_1^3 =  2-\sqrt{5}, \!\quad
\lambda_1^2 \lambda_2 =  \tfrac{1}{2} (-1+\sqrt{5}), \quad\!
\lambda_1 \lambda_2^2 = \tfrac{1}{2} (-1-\sqrt{5}), \quad\! \text{and} \quad\!
\lambda_2^3 = \sqrt{5}+2.
\end{equation*}
\end{example}

Theorem \ref{Theorem:Eigenvalues} provides convenient formulas for the trace and determinant of a graph, that is, the trace and determinant of its adjacency matrix.

\begin{corollary}
Let $\G$ be a graph with spectrum $\lambda_1,\lambda_2,\ldots,\lambda_n$ and let $k \in \N$.
Then $\det( \G^{\odot k}) = (\det \G)^{\binom{n+k-1}{n}}$ and 
$\tr (\G^{\odot k}) = \sum_{i_1\leq i_2 \leq \cdots \leq i_k} \lambda_{i_1} \lambda_{i_2} \cdots \lambda_{i_k}$, 
the complete homogeneous symmetric polynomial of degree $k$ in the eigenvalues of $\G$.
\end{corollary}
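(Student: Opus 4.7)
The plan is to invoke Theorem~\ref{Theorem:Eigenvalues} directly for both identities, since it pins down the spectrum of $A^{\odot k}$ exactly as the multiset of products $\lambda_{i_1}\cdots\lambda_{i_k}$ indexed by nondecreasing $k$-tuples in $[n]^k$. The trace and determinant of $\G^{\odot k}$ are the sum and product of the eigenvalues of $A^{\odot k}$, so the task reduces to identifying those two symmetric functions of the $\lambda_i$.

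For the trace, I would simply observe that summing $\lambda_{i_1}\cdots\lambda_{i_k}$ over all nondecreasing $1\le i_1\le\cdots\le i_k\le n$ is by definition the complete homogeneous symmetric polynomial $h_k(\lambda_1,\ldots,\lambda_n)$. No further work is required.

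For the determinant, I would take the product of all $N=\binom{n+k-1}{k}$ eigenvalues and determine the exponent with which each $\lambda_j$ appears. Every $\lambda_j$ must appear with the same exponent $E$ by the symmetry of the indexing set (swapping the roles of $j$ and $j'$ is a bijection of nondecreasing tuples), so counting the total number of ``factor slots'' gives $nE = k\cdot N = k\binom{n+k-1}{k}$. Solving,
\begin{equation*}
E \;=\; \frac{k}{n}\binom{n+k-1}{k} \;=\; \frac{(n+k-1)!}{n!\,(k-1)!} \;=\; \binom{n+k-1}{n},
\end{equation*}
and therefore $\det(\G^{\odot k}) = \prod_{j=1}^n \lambda_j^{E} = (\det \G)^{\binom{n+k-1}{n}}$.

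There is essentially no obstacle beyond the small combinatorial identity above; the content of the corollary sits entirely in Theorem~\ref{Theorem:Eigenvalues}, and the symmetry argument for the uniform exponent could alternatively be replaced by a direct count of nondecreasing tuples containing $j$ a prescribed number of times, which would land on the same binomial coefficient.
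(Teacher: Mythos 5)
Your proof is correct, and the overall skeleton matches the paper's: both reduce everything to Theorem \ref{Theorem:Eigenvalues}, read off the trace immediately as the complete homogeneous symmetric polynomial, and obtain the determinant by finding the exponent with which each $\lambda_j$ occurs in the product of all $N=\binom{n+k-1}{k}$ eigenvalues. Where you diverge is in how that exponent is evaluated. The paper writes the exponent as the explicit sum $\sum_{\ell=0}^{k}\ell\binom{k-\ell+n-2}{k-\ell}$ (summing, over $\ell$, the number of nondecreasing tuples in which a fixed index appears exactly $\ell$ times, weighted by $\ell$) and then identifies this sum with $\binom{n+k-1}{n}$ via a bijection with compositions of $k-1$ into $n+1$ nonnegative parts. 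You instead double-count the total number of ``factor slots'': by the symmetry of relabelling $[n]$, every $\lambda_j$ occurs with the same exponent $E$, and $nE = kN$ forces $E=\tfrac{k}{n}\binom{n+k-1}{k}=\binom{n+k-1}{n}$. Your route is shorter and avoids the summation identity entirely; the paper's route, while more laborious, computes the count of occurrences of a fixed index directly and so does not rely on the uniformity-by-symmetry observation. Both are valid, and your closing remark that the symmetry step could be replaced by a direct count of tuples containing $j$ a prescribed number of times is exactly the computation the paper carries out.
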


\begin{proof}
%\comment{We have phrased everything in terms of nondecreasing sequences $\vec{i} = (i_1,i_2,\ldots,i_k)$
%with $1\leq i_1 \leq i_2 \leq \cdots \leq i_k \leq n$, so the red part may be confusing.  Can you please rephrase or explain to avoid confusion? {\color{red}{done}}}%
The trace of a graph $\G$ is the sum of the eigenvalues of its adjacency matrix $A$, so the desired trace formula follows from Theorem \ref{Theorem:Eigenvalues}.  The determinant is the product of the eigenvalues, so
\begin{align*}
  \det  (A^{\odot k} )
  = \prod _{\substack{\vec{i} = (i_1,i_2,\ldots,i_k)\\ 1\leq i_1 \leq i_2 \leq \cdots \leq i_k \leq n}}\,\,
  \prod _{j=1}^n\lambda _j^{\vec{m}(\vec{i})_j}=\prod _{i=1}^{n}\lambda _i^{\beta _i},  
\end{align*}
in which $\beta _i$ is the number of times $\lambda _i$ appears in the whole product. This is equivalent to adding the contribution of each individual product, so 
\begin{equation*}
\beta _i=\sum _{\ell =0}^k\ell \cdot \binom{k-\ell +n-2}{k-\ell}=\binom{k+n-1}{n},
\end{equation*}
in which the final equality arises as follows: let $(x_1,x_2,\cdots ,x_n)$ be a composition of $k$ into $n$ nonnegative parts. Fix $i\in [n]$ such that $x_i=\ell$ and take out $0\leq r<\ell$ from  the part $x_i$ to a new part $x_{n+1}$. 
%\comment{This sentence is confusing.  Can you please rephrase so that it is clearer?  The use of ``it'' and ``new part'' add to the confusion.\color{red}done.}%
%\highlight{
There are $\ell$ choices for $r$ to take to the new part $x_{n+1}$ and it cannot be $0$ so $x _{n+1}>0$, so we have taken 
one out, which adds up to $k-1$, and so these are compositions of $k-1$ into $n+1$ nonnegative parts which are counted 
by $\binom{(k-1)+n+1-1}{n+1-1}=\binom{n+k-1}{n}$.
\end{proof}

%%%%%%%%%%%%%%%%%%%%%%%%%%%%%%%%%%%%%%%%%%%%%%%%%%%%%%
\section{Combinatorial properties}\label{Section:Combinatorial}
The number $|E_{\G^{\otimes k}}|$ of edges of a tensor (Kronecker) power $\G^{\otimes k}$ of an undirected simple graph $\G$ is related to the original number of edges $| E_{\G} |$ by $ | E_{\G^{\otimes k}} |=2^{k-1} | E_{\G} |^k$ \cite{hammack-2011a}.  This yields the upper bound
in the next result. 
%{\color{red}
The lower bound is obtained by noticing that an edge in $\G^{\otimes k}$ can be seen as an edge in $\G^{\otimes k}$ when one permute the indices of one of the vertices, so $|E_{\G^{\otimes k}}|\leq |E_{\G^{\odot k}}|\cdot k!$.

\begin{proposition}
For a simple undirected graph, $\G$ we have
%\comment{How is the lower bound obtained?  This may warrant a proof.{\color{red}done.}}%
\begin{equation*}
%{\color{red}
\frac{2^{k-1}|E_{\G}|^k}{k!} \leq |E_{\G^{\odot k}}|\leq 2^{k-1}|E_{\G}|^k.
\end{equation*}
\end{proposition}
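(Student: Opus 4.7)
My plan is to prove both inequalities by exploiting the natural map $\Phi: E_{\G^{\otimes k}} \to E_{\G^{\odot k}}$ that sends an edge $\{\vec{p},\vec{q}\}$ of the tensor (Kronecker) power to the edge $\{[\vec{p}],[\vec{q}]\}$ of the symmetric tensor power, where $[\vec{p}]$ denotes the $\S_k$-orbit of $\vec{p} \in [n]^k$ (equivalently, its nondecreasing representative). This map is well-defined because $\G^{\odot k}$ is constructed by quotienting $\G^{\otimes k}$ by the diagonal $\S_k$ action on vertices, and it is surjective because each edge of $\G^{\odot k}$ corresponds to a nonzero entry of $A^{\odot k}$ which, by Lemma~\ref{Lemma:MatrixRep}, requires at least one adjacent pair $(\vec{p},\vec{q}) \in \Orb(\vec{i}) \times \Orb(\vec{j})$ in $\G^{\otimes k}$.

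For the upper bound, the cited identity $|E_{\G^{\otimes k}}| = 2^{k-1}|E_{\G}|^k$ from \cite{hammack-2011a} combined with the surjectivity of $\Phi$ immediately gives $|E_{\G^{\odot k}}| \leq |E_{\G^{\otimes k}}| = 2^{k-1}|E_{\G}|^k$.

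For the lower bound, I would bound every fiber of $\Phi$ by $k!$. The diagonal $\S_k$ action on $V(\G^{\otimes k})^2$ preserves adjacency: if $p_\ell \sim q_\ell$ in $\G$ for all $\ell$, then $p_{\sigma(\ell)} \sim q_{\sigma(\ell)}$ for all $\ell$ and every $\sigma \in \S_k$, so the action permutes preimages of any edge of $\G^{\odot k}$. Given an edge $\{[\vec{i}],[\vec{j}]\}$, any preimage $\{\vec{p},\vec{q}\}$ may be normalized by applying a suitable $\sigma$ so that the first coordinate becomes $\vec{i}$; the second then lies in $\Orb(\vec{j})$, which has cardinality at most $k!$. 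Summing this fiber bound over all edges of $\G^{\odot k}$ yields $|E_{\G^{\otimes k}}| \leq k! \cdot |E_{\G^{\odot k}}|$, which rearranges to the desired lower bound $|E_{\G^{\odot k}}| \geq 2^{k-1}|E_{\G}|^k / k!$.

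The main obstacle is the fiber-counting step: one must verify carefully that $\S_k$-normalization really exhausts the fiber (so the $\leq k!$ bound applies to \emph{every} edge of $\G^{\odot k}$), and separately handle the case $[\vec{i}] = [\vec{j}]$, where the edge is a loop. In the loop case the unordered nature of edges and the possible coincidence of endpoints may introduce a factor of $2$ or permit additional identifications that must be tracked to avoid overcounting.
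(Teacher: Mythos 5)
Your strategy coincides with the paper's own (two-sentence) argument: the upper bound from surjectivity of the quotient map $\G^{\otimes k}\to\G^{\odot k}$ on edges, and the lower bound from the claim $|E_{\G^{\otimes k}}|\le k!\,|E_{\G^{\odot k}}|$. The upper bound is fine. But the fiber estimate in your lower bound is false, and you were right to single it out as the main obstacle. After normalizing a preimage $\{\vec{p},\vec{q}\}$ so that its first endpoint equals $\vec{i}$, the normalization map is not injective: $\{\vec{p},\vec{q}\}$ and $\{\sigma\vec{p},\sigma\vec{q}\}$ are in general \emph{distinct} edges of $\G^{\otimes k}$ that normalize to the same pair $(\vec{i},\vec{q}')$. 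The fiber of $\{[\vec{i}],[\vec{j}]\}$ can therefore have up to $|\Orb(\vec{i})|\cdot|\Orb(\vec{j})|$ elements, which is of order $(k!)^2$ rather than $k!$. Concretely, in $\C_4$ with $k=2$ the vertices $(1,3)$ and $(2,4)$ of $\C_4^{\odot 2}$ are joined by an edge whose fiber consists of the four distinct edges $\{(1,3),(2,4)\}$, $\{(1,3),(4,2)\}$, $\{(3,1),(2,4)\}$, $\{(3,1),(4,2)\}$ of $\C_4^{\otimes 2}$; that is $4>2=k!$.

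Worse, the gap cannot be repaired, because the stated lower bound is itself false. Take $\G=\K_{4,4}$ and $k=2$. By Theorem \ref{Theorem:Bite} (or a direct check), $\K_{4,4}^{\odot 2}$ is the disjoint union of $\K_{10,10}$ and $\J_{16}$, which together have $100+\binom{17}{2}=236$ edges, whereas $2^{k-1}|E_{\G}|^{k}/k!=2\cdot 16^{2}/2=256$. Thus the average fiber size ($512/236$) already exceeds $k!=2$, so no fiber-counting argument with denominator $k!$ can succeed; the paper's own sketch has exactly the same defect. A version of the inequality that your argument does support is $|E_{\G^{\odot k}}|\ge 2^{k-1}|E_{\G}|^{k}/(k!)^{2}$, since each fiber has at most $|\Orb(\vec{i})|\cdot|\Orb(\vec{j})|\le (k!)^{2}$ elements.
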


The number of edges adjacent to a vertex $v$ of a graph $\G$ is the \emph{degree} of $v$ in $\G$, denoted  $\deg_{\G} v$. 
This is the size of the set $N_{\G}(v)=\{u\in V_{\G}:\{u,v\}\in E_{\G}\}$ of neighboring vertices.  The next 
proposition characterizes the neighbor set of a vertex and gives a bound on its cardinality. 
%\comment{Can you harmonize this with the new notation?  We have been using $\vec{i},\vec{j}$ as the vertices in the symmetric
%tensor power graph.  I'm actually a bit confused by the Cartesian product notation and the exponents there.{\color{red}Done.
 The cartesian product is used to generate sequences from the neighbours of elements in the vertex.
 
\begin{proposition}\label{Prop:Degree}
Let $\G$ an undirected graph with vertex set $V_{\G} =\{v_1,v_2,\ldots ,v_n\}$ and let $k\in \N$. 
For $\vec{i}\in V_{\G^{\odot k}}$ (that is, $\vec{i}$ is a nondecreasing element of $[n]^k$), 
\begin{equation*}
N_{\G^{\odot k}}(\vec{i})=\left \{\vec{j}\in V_{\G^{\odot k}}:\Orb(\vec{j})\cap \left ( N(v_1)_{\G}^{\vec{m}(\vec{i})_1}\times N(v_2)_{\G}^{\vec{m}(\vec{i})_2}\times \cdots \times N(v_n)_{\G}^{\vec{m}(\vec{i})_n}\right )\neq \emptyset \right \}.
\end{equation*}

Moreover, its cardinality  is bounded by 
$$ \deg_{\G^{\odot k}}(\vec{i})\leq  \binom{ | \bigcup _{\vec{m}(\vec{x})_i>0}N_{\G}(v_i)  |+k-1}{k}\leq \prod_{\ell =1}^n\deg_{\G}(v_{\ell})^{\vec{m}(\vec{x})_{\ell}}.$$
\end{proposition}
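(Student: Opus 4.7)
The plan is to deduce everything from Lemma \ref{Lemma:MatrixRep}. Vertices $\vec{i}$ and $\vec{j}$ are adjacent in $\G^{\odot k}$ if and only if $[A^{\odot k}]_{\vec{i},\vec{j}} \neq 0$. Because $\G$ is unweighted, every $[A]_{i,j}$ lies in $\{0,1\}$, so the formula in Lemma \ref{Lemma:MatrixRep} is a sum of nonnegative terms and is positive iff at least one summand is positive. Adjacency is therefore equivalent to the existence of $\vec{p} \in \Orb(\vec{i})$ and $\vec{q} \in \Orb(\vec{j})$ with $v_{q_\ell} \in N_\G(v_{p_\ell})$ for every $\ell \in [k]$.

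To translate this into the stated characterization, I would fix the canonical sorted representative $\vec{p} \in \Orb(\vec{i})$ whose first $m_1(\vec{i})$ entries equal $1$, whose next $m_2(\vec{i})$ entries equal $2$, and so on. For this particular $\vec{p}$, the condition ``$v_{q_\ell} \in N_\G(v_{p_\ell})$ for all $\ell$'' becomes exactly membership $\vec{q} \in N_\G(v_1)^{m_1(\vec{i})} \times \cdots \times N_\G(v_n)^{m_n(\vec{i})}$. Since any other $\vec{p}'\in \Orb(\vec{i})$ is a permutation of this one, which merely re-shuffles the coordinates of the admissible set of $\vec{q}$'s, the union over $\vec{p}$ of admissible $\vec{q}$'s is the $\S_k$-saturation of this product; hence $\vec{j}$ is a neighbor iff $\Orb(\vec{j})$ meets this Cartesian product.

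For the degree estimates, the characterization reduces $\deg_{\G^{\odot k}}(\vec{i})$ to the number of $\S_k$-orbits (equivalently, nondecreasing $k$-tuples) meeting $P := N_\G(v_1)^{m_1(\vec{i})} \times \cdots \times N_\G(v_n)^{m_n(\vec{i})}$. Every coordinate of every element of $P$ lies in $U := \bigcup_{i:\, m_i(\vec{i}) > 0} N_\G(v_i)$, a set of size $M$; thus every neighbor $\vec{j}$ is a nondecreasing $k$-tuple with entries in $U$, and there are exactly $\binom{M+k-1}{k}$ such tuples. Independently, since distinct orbits are disjoint and each counted orbit contributes at least one element to $P$, the number of neighbors is also bounded by $|P| = \prod_{\ell=1}^n \deg_\G(v_\ell)^{m_\ell(\vec{i})}$.

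I expect the main obstacle to be notational bookkeeping around the choice of representative $\vec{p}$ in the characterization; once that is cleanly organized, both bounds are a direct count. One small caveat worth flagging: the two upper bounds above are not automatically comparable in the direction written in the chain (take $\G = \K_2$, $k=2$, $\vec{i}=(1,2)$, where the binomial equals $3$ while the product equals $1$), so I would either present them as two independent estimates on $\deg_{\G^{\odot k}}(\vec{i})$ or impose a hypothesis ensuring the intermediate inequality.
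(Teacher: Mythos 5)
Your proof follows essentially the same route as the paper's: read adjacency off the sign of the entry in Lemma \ref{Lemma:MatrixRep}, reduce the existence of a positive summand to the existence of $\vec{q}\in\Orb(\vec{j})$ lying in the product $N_\G(v_1)^{m_1}\times\cdots\times N_\G(v_n)^{m_n}$ (the paper skips the bookkeeping with the canonical sorted representative that you spell out, but the idea is identical), and then bound the degree by counting nondecreasing $k$-tuples with entries in the union $U$ on one hand and by $|P|$ via disjointness of orbits on the other. Both of your bounds are proved correctly.

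Your caveat about the chain of inequalities is a genuine catch, not mere pedantry: the asserted intermediate inequality $\binom{|U|+k-1}{k}\leq\prod_{\ell}\deg_\G(v_\ell)^{m_\ell}$ is false in general, and your example ($\G=\K_2$, $k=2$, $\vec{i}=(1,2)$, where the left side is $3$ and the right side is $1$) refutes it. The paper's own justification of this step --- ``the second part follows from the first since there is a single neighboring set'' --- only makes sense when exactly one $m_i$ is nonzero, i.e.\ $\vec{i}=(i,i,\ldots,i)$, in which case $U=N_\G(v_i)$ and the multiset-versus-tuple comparison $\binom{M+k-1}{k}\leq M^k$ does hold. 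So your proposed fix (state the two estimates as independent upper bounds on $\deg_{\G^{\odot k}}(\vec{i})$, or restrict the chain to the single-neighborhood case) is the right repair, and in this respect your write-up is more careful than the paper's.
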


\begin{proof}
 It follows from Proposition \ref{Lemma:MatrixRep} that the positivity of an entry requires the  existence of $(\vec{q},\vec{t})\in \Orb(\vec{x})\times \Orb(\vec{y})$ such that $\vec{t}_i\in N_{\G}(\vec{q}_{\ell})$ for every $\ell$. Therefore the elements must  belong to the union of the
 elements in $\vec{x}$. The second part follows from the first since there is a single  neighboring  set.
\end{proof}

If $\vec{i}=(v_{\ell},v_{\ell},\ldots, v_{\ell})$, with $v_{\ell}$ repeated $k$ times, then
$\deg_{\G^{\odot k}} (\vec{i}) =\binom{\deg_{\G}(v_\ell)+k-1}{k}$ since the neighbors of $\vec{i}$ are 
$k$-sequences of nondecreasing neighbors of $v_{\ell}$.
We can completely characterize the number of loops, edges, and the degrees of vertices in the second symmetric tensor power.

\begin{proposition}\label{Prop:Grados}
Let $\G$ be an undirected graph with vertex set $V_{\G} = \{v_1, v_2,\ldots ,v_n\}$. 
\begin{enumerate}[leftmargin=*]
    \item if $\vec{i}=(v_a,v_b)$, then 
    \begin{equation*}
        \deg_{\G^{\odot 2}}(\vec{i})=
        \begin{cases}
        \binom{\deg_{\G}(v_a)+1}{2} & \text{if $a=b$},\\
        \deg_{\G}(v_a)\deg_{\G}(v_b)-\binom{ |N_{\G}(v_a)\cap N_{\G}(v_b) |}{2} & \text{if $a<b$}.
        \end{cases}
    \end{equation*}
    
\item The number of loops on ${\G}^{\odot 2}$ is  $E_{\G}+\ell _d$, where
\begin{equation*}
\ell _d=\left| \big\{ \{v_1,v_2\}\not \in E_{\G}:\{v_1,v_1\},\{v_2,v_2\}\in E_{\G} \big\} \right|
\end{equation*}
is the number of non adjacent pairs of loops in $\G$.

\item The number of edges in $\G^{\odot 2}$ is 
{\footnotesize
\begin{equation*}
\frac{1}{2}\left (E_{\G}+\ell _d +\sum_{i\in [n]}\binom{\deg_{\G}(v_i)+1}{2}+\sum_{1\leq i<j\leq n}\deg _{\G}(v_i)\deg_{\G}(v_j)-\sum_{1\leq i<j\leq n}\binom{|N_{\G}(v_i)\cap N_{\G}(v_j)|}{2}\right )
\end{equation*}
}
\end{enumerate}
\end{proposition}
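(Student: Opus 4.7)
The plan is to address the three parts in order, with parts (a) and (b) feeding into (c) via handshaking. All three arguments reduce to identifying, for a given nondecreasing $\vec{i}\in [n]^2$, the nondecreasing $\vec{j}\in [n]^2$ lying in $N_{\G^{\odot 2}}(\vec{i})$; this is exactly what Proposition~\ref{Prop:Degree} computes in the case $k=2$.

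For part (a), specialize Proposition~\ref{Prop:Degree}. If $\vec{i}=(a,a)$, then $\vec{m}(\vec{i})=2e_a$, so the neighbors are the nondecreasing pairs $(c,d)$ with $c,d\in N_{\G}(v_a)$, of which there are $\binom{\deg_{\G}(v_a)+1}{2}$. If $a<b$, the neighbors are in bijection with the $\S_2$-orbits of ordered pairs in $N_{\G}(v_a)\times N_{\G}(v_b)$. The main obstacle is to count these orbits correctly. Setting $M:=N_{\G}(v_a)\cap N_{\G}(v_b)$, the diagonal pairs $(c,c)$ in the product contribute one orbit each (for $c\in M$), while off-diagonal pairs $(c,d)$ with $c\neq d$ contribute orbits of size two; such an off-diagonal orbit appears twice in the product when both $c,d\in M$ and only once otherwise. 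Solving the resulting linear bookkeeping shows that the number of distinct orbits meeting $N_{\G}(v_a)\times N_{\G}(v_b)$ is $\deg_{\G}(v_a)\deg_{\G}(v_b)-\binom{|M|}{2}$, as claimed.

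For part (b), a loop at $\vec{i}\in V_{\G^{\odot 2}}$ is the condition $\vec{i}\in N_{\G^{\odot 2}}(\vec{i})$; apply the analysis above with $\vec{j}=\vec{i}$. A diagonal vertex $(a,a)$ has a loop iff $v_a\in N_{\G}(v_a)$, i.e., iff $v_a$ carries a loop in $\G$. An off-diagonal vertex $(a,b)$ with $a<b$ has a loop iff the orbit $\{(a,b),(b,a)\}$ meets $N_{\G}(v_a)\times N_{\G}(v_b)$, which occurs precisely when $\{v_a,v_b\}\in E_{\G}$ or when both $v_a$ and $v_b$ carry loops in $\G$. Summing: the loops of $\G$ together with the non-loop edges of $\G$ account for $|E_{\G}|$ loops of $\G^{\odot 2}$, and the remaining contribution comes from pairs with both loops but no connecting edge, which is exactly $\ell_d$, yielding the total $|E_{\G}|+\ell_d$.

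For part (c), invoke the handshaking identity in the convention of Proposition~\ref{Prop:Degree}, under which each loop at $\vec{i}$ is counted once in $N_{\G^{\odot 2}}(\vec{i})$ and so contributes $1$ rather than $2$ to $\deg_{\G^{\odot 2}}(\vec{i})$. This gives
\begin{equation*}
2|E_{\G^{\odot 2}}|=\sum_{\vec{i}\in V_{\G^{\odot 2}}}\deg_{\G^{\odot 2}}(\vec{i})+(\text{number of loops of }\G^{\odot 2}).
\end{equation*}
Split the vertex sum into the $n$ diagonal vertices $(i,i)$ and the $\binom{n}{2}$ off-diagonal vertices $(i,j)$ with $i<j$, substitute the two formulas of part (a) for the degrees and $|E_{\G}|+\ell_d$ from part (b) for the loop count, and divide by $2$; the result is precisely the identity stated in part (c).
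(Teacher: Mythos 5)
Your proposal is correct and follows essentially the same route as the paper: part (a) by specializing Proposition~\ref{Prop:Degree} and counting $\S_2$-orbits meeting $N_{\G}(v_a)\times N_{\G}(v_b)$ with the $\binom{|M|}{2}$ correction, part (b) by classifying loops into the edge case and the double-loop case, and part (c) by the handshake identity in the loop-counts-once convention. Your write-up is in fact more explicit than the paper's terse proof, particularly in the orbit bookkeeping for (a) and in making the degree/loop convention for the handshake step precise.
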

%\comment{Same thing here.  Can you harmonize all this with the notation used elsewhere?  The vertices of the original graph $\G$
%are $v_1,v_2,\ldots,v_n$, so the use of $\vec{v}_1,\vec{v}_2, \ldots, \vec{v}_n$ is now confusing. {\color{red}{done}}}%

\begin{proof}
(a) If $a=b$, Proposition \ref{Prop:Degree} gives the result. If $a\neq b$, then every element in 
    $\deg _{\G^{\odot 2}}(\vec{i})$ belongs to $N(v_a)\times N(v_b)$. This overcounts the choices for which the chosen vertices can be in either position of the tuple. Then take out the number of ways to choose two elements in the intersection of the neighbours to obtain the result. 

\smallskip\noindent(b) There are two kinds of loops: those  for which $(a,b)$ goes to $(b,a)$ in $\G^{\odot 2}$ when $\{a,b\}\in E_{\G}$ or $(a,b)$ goes to $(a,b)$ when $a$ and $b$ have loops. This last case  is counted by $\ell _d$ when $\{a,b\}\not \in E_{\G}$.

\smallskip\noindent(c) The result is now  a consequence of the handshake lemma  adding over the degrees with the formula established above.  
\end{proof}

\begin{corollary}
If $\G$ is $1$-regular, then $\G^{\odot k}$ is $1$-regular.
\end{corollary}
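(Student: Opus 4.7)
The plan is to compute $A^{\odot k}$ row by row using Lemma~\ref{Lemma:MatrixRep} and verify that each row has exactly one nonzero entry, equal to $1$. Since $\G$ is $1$-regular, its vertex set splits into disjoint edges, so there is a fixed-point-free involution $\sigma \colon [n] \to [n]$ with $[A]_{i,j} = 1$ if and only if $j = \sigma(i)$. I will identify, for each nondecreasing $\vec{i} \in [n]^k$, the unique neighbor of $\vec{i}$ in $\G^{\odot k}$ together with the weight $1$ of the edge joining them.

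Fix a nondecreasing $\vec{i}$ and any $\vec{p} \in \Orb(\vec{i})$. The key observation is that the product $\prod_{\ell=1}^{k}[A]_{p_\ell,q_\ell}$ vanishes unless $q_\ell = \sigma(p_\ell)$ for every $\ell$; when this holds, the product equals $1$. Hence for each $\vec{p} \in \Orb(\vec{i})$, exactly one $\vec{q}$ makes the contribution nonzero, namely $\vec{q} = \sigma(\vec{p})$ applied componentwise. Let $\vec{j}$ denote the nondecreasing rearrangement of $\sigma(\vec{p})$; because $\sigma$ is a bijection and permuting $\vec{p}$ merely permutes $\sigma(\vec{p})$, the tuple $\vec{j}$ is independent of the chosen $\vec{p}$. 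Moreover its multiplicity vector is obtained from $\vec{m}(\vec{i})$ by the permutation $\sigma$, so $\binom{k}{\vec{m}(\vec{i})} = \binom{k}{\vec{m}(\vec{j})}$.

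Plugging into Lemma~\ref{Lemma:MatrixRep}, the double sum over $\Orb(\vec{i}) \times \Orb(\vec{j})$ contains exactly $|\Orb(\vec{i})| = \binom{k}{\vec{m}(\vec{i})}$ nonzero terms (one for each $\vec{p}$, each equal to $1$), and the prefactor $1/\binom{k}{\vec{m}(\vec{i})}$ yields $[A^{\odot k}]_{\vec{i},\vec{j}} = 1$. For any other nondecreasing $\vec{j}' \neq \vec{j}$, no $\vec{q} \in \Orb(\vec{j}')$ can equal any $\sigma(\vec{p})$, so $[A^{\odot k}]_{\vec{i},\vec{j}'} = 0$. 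Thus every vertex of $\G^{\odot k}$ has a unique neighbor with unit edge weight, proving the claim. The only real bookkeeping is verifying the multiplicity identity $\binom{k}{\vec{m}(\vec{i})} = \binom{k}{\vec{m}(\vec{j})}$ so that the normalization in Lemma~\ref{Lemma:MatrixRep} cancels the count of matched pairs $(\vec{p},\sigma(\vec{p}))$; this follows automatically from $\sigma$ being a bijection, so no genuine obstacle arises.
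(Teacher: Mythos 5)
Your proof is correct and rests on the same underlying idea as the paper's: $1$-regularity gives each vertex of $\G$ a unique neighbor (a fixed-point-free involution $\sigma$), so the componentwise product structure forces each $\vec{p}\in\Orb(\vec{i})$ to match a unique $\vec{q}=\sigma(\vec{p})$, all landing in a single orbit. The paper's version is a one-line appeal to the neighborhood characterization of Proposition \ref{Prop:Degree}, while yours works directly from Lemma \ref{Lemma:MatrixRep} and has the added merit of verifying that the surviving entry equals exactly $1$ (via $\binom{k}{\vec{m}(\vec{i})}=\binom{k}{\vec{m}(\vec{j})}$), and of implicitly covering the case $\vec{j}=\vec{i}$, where the unique neighbor is a loop.
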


\begin{proof}
Using the expression for the neighbourhood set, there is just one element that connects for $\vec{x}\in V_{\G^{\odot k}}$, mainly $\vec{y}$ such that the only element in $N(v_1)^{x_1}\times N(v_2)^{x_2}\times \cdots \times N(v_n)^{x_n}$ is contained in $\Orb(\vec{y})$.
\end{proof}

%\subsection{Connectivity properties of $G^{\odot k}$}
%
%The goal of this subsection is to relate graph-theoretical properties of a graph $G$ with those of the power $G^{\otimes k}$. 

\begin{theorem}[Theorem 1 of \cite{weichsel-1962a}]\label{Theorem:Connectness}
(a) If $\G$ is connected and non-bipartite, then $\G^{\otimes 2}$ is connected. 
(b) If $\G$ is connected and bipartite, then $\G^{\otimes 2}$ has two connected components.
\end{theorem}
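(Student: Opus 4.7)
The plan is to reduce the connectivity question in $\G^{\otimes 2}$ to a walk-length question in $\G$: a walk of length $\ell$ in $\G^{\otimes 2}$ from $(u_1, v_1)$ to $(u_2, v_2)$ corresponds precisely to a pair of walks of length $\ell$ in $\G$, one from $u_1$ to $u_2$ and one from $v_1$ to $v_2$. Hence $(u_1, v_1)$ and $(u_2, v_2)$ lie in the same component of $\G^{\otimes 2}$ if and only if there is some common $\ell$ for which walks of length $\ell$ connect both pairs in $\G$.

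For (b), suppose $\G$ is bipartite with parts $X$ and $Y$. I would partition $V(\G^{\otimes 2}) = V(\G)^2$ into
\begin{equation*}
S_0 := (X \times X) \cup (Y \times Y) \qquad \text{and} \qquad S_1 := (X \times Y) \cup (Y \times X).
\end{equation*}
Because every edge of $\G$ crosses the bipartition, any edge of $\G^{\otimes 2}$ preserves membership in $S_0$ or $S_1$, so $\G^{\otimes 2}$ has at least two components. To obtain exactly two, I would verify that each $S_i$ is connected by using the fact that in a connected bipartite graph all walks between a fixed pair of vertices share a common parity; so given two pairs of vertices sitting in the same $S_i$, the two required walks in $\G$ have matching parity and can be padded to a common length by repeated back-and-forth traversals of a single edge.

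For (a), the central input is that a connected non-bipartite graph contains an odd cycle $C$. The plan is to first establish the classical lemma that for any two vertices $u, v \in V(\G)$ there is a threshold $L(u,v)$ such that a walk of length $\ell$ from $u$ to $v$ exists for every $\ell \geq L(u,v)$. The idea is to start from any walk $u \to v$ and lengthen it either by traversing a fixed edge back-and-forth (adding $2$) or by inserting a detour around $C$ (adding an odd number), thereby realizing every sufficiently large length of either parity. Given this, for any $(u_1, v_1), (u_2, v_2)$, taking $\ell \geq \max\{L(u_1,u_2), L(v_1,v_2)\}$ yields a walk in $\G^{\otimes 2}$ between them. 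The main obstacle is precisely this parity-adjustment step, which must be set up so that both parities are simultaneously attainable for all large $\ell$; once this is secured, the rest of the argument is routine.
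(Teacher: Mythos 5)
The paper does not actually prove this statement---it is quoted verbatim as Theorem~1 of Weichsel's 1962 paper and used as a black box in Corollary~\ref{Corollary:Conn}---so there is no internal proof to compare against. Your argument is a correct, self-contained proof along the classical lines of Weichsel's original one. The key observation, that a length-$\ell$ walk in $\G^{\otimes 2}$ from $(u_1,v_1)$ to $(u_2,v_2)$ is exactly a pair of length-$\ell$ walks in $\G$, is the right reduction. In (b), the invariance of $S_0$ and $S_1$ under edges of $\G^{\otimes 2}$ gives at least two components, and your parity bookkeeping is sound: if $(u_1,v_1)$ and $(u_2,v_2)$ lie in the same $S_i$, then $u_1,u_2$ lie in the same part of the bipartition if and only if $v_1,v_2$ do, so the two walks have matching parity and can be padded to a common length by traversing an incident edge back and forth. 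In (a), the step you flag as the main obstacle does go through: given any walk $u\to v$ of length $m$, routing through a vertex $w$ on an odd cycle $C$ (go $u\to w$, once around $C$, back $w\to u$, then the original walk) produces a walk of length $m+2p+c$ with $c$ odd, so both parities are realized and padding by $2$ fills in all sufficiently large lengths; taking $\ell$ beyond both thresholds connects any two vertices of $\G^{\otimes 2}$. The only caveats are degenerate ones the classical statement also implicitly excludes (e.g.\ a single isolated vertex, where $\G^{\otimes 2}$ has one component, not two), and the tacit use that every vertex of a connected graph on at least two vertices has positive degree, which the padding step requires.
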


\begin{corollary}\label{Corollary:Conn}
(a) If $\G$ is connected and non-bipartite, then $\G^{\odot 2}$ is connected. 
(b) If $\G$ is connected and bipartite, then $\G^{\odot 2}$ has two components.
\end{corollary}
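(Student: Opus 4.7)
The plan is to realize $\G^{\odot 2}$ as the quotient of the Kronecker square $\G^{\otimes 2}$ under the $\S_2$-action that swaps coordinates, and then propagate Weichsel's theorem through this quotient.

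First I would introduce the natural surjection $\pi : V_{\G^{\otimes 2}} \to V_{\G^{\odot 2}}$ sending each ordered pair $(i,j) \in [n]^2$ to the nondecreasing representative of its $\S_2$-orbit. Lemma \ref{Lemma:MatrixRep} with $k=2$, together with the fact that the adjacency entries of an unweighted $\G$ lie in $\{0,1\}$, shows that two vertices $\vec{i},\vec{j}$ of $\G^{\odot 2}$ are joined by an edge precisely when some pair in $\Orb(\vec{i}) \times \Orb(\vec{j})$ is an edge of $\G^{\otimes 2}$. Consequently $\pi$ carries edges to edges and walks to walks, so the $\pi$-image of any connected subgraph of $\G^{\otimes 2}$ is a connected subgraph of $\G^{\odot 2}$.

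Part (a) is then immediate: Theorem \ref{Theorem:Connectness}(a) gives $\G^{\otimes 2}$ connected, and surjectivity of $\pi$ transfers connectedness to $\G^{\odot 2}$.

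For part (b), suppose $\G$ is bipartite with parts $X$ and $Y$. The proof of Theorem \ref{Theorem:Connectness}(b) identifies the two components of $\G^{\otimes 2}$ explicitly as $C_1 = (X \times X) \cup (Y \times Y)$ and $C_2 = (X \times Y) \cup (Y \times X)$. The crucial observation is that the $\S_2$-action preserves each $C_r$ setwise: the coordinate swap exchanges $X \times Y$ with $Y \times X$ (fixing $C_2$) and preserves each of $X \times X$ and $Y \times Y$ (fixing $C_1$). Thus every orbit lies entirely in one $C_r$, so $\pi(C_1)$ and $\pi(C_2)$ are disjoint subsets of $V_{\G^{\odot 2}}$; each is connected by the previous paragraph, and together they cover $V_{\G^{\odot 2}}$, yielding exactly two components. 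The only substantive verification is that the $\S_2$-action respects the bipartite component decomposition of $\G^{\otimes 2}$, which is essentially automatic, so I do not anticipate a genuine obstacle — the corollary reduces to bookkeeping on top of Weichsel's theorem and the quotient interpretation of $\G^{\odot 2}$.
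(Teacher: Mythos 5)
Your proposal is correct and follows essentially the same route as the paper: both deduce the result from Weichsel's theorem by viewing $\G^{\odot 2}$ as the quotient of $\G^{\otimes 2}$ under the coordinate swap, so that identifying $\S_2$-orbits cannot increase the number of components and one need only check it does not decrease it in the bipartite case. The one point of divergence is how you verify that no orbit straddles the two components of $\G^{\otimes 2}$: you use the explicit decomposition into $(X\times X)\cup(Y\times Y)$ and $(X\times Y)\cup(Y\times X)$ together with its swap-invariance, whereas the paper lifts a path in $\G$ from $v_i$ to $v_j$ to a path in $\G^{\otimes 2}$ from $(v_i,v_j)$ to $(v_j,v_i)$ --- both verifications are sound.
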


\begin{proof}
(a) This follows from Theorem \ref{Theorem:Connectness} and the graph-theoretic construction of the symmetric tensor product. 

\smallskip\noindent(b) If $(v_i,v_j)\in V_{\G^{\odot 2}}$, the connectivity of $\G$ provides a path from $v_i$ to $v_j$. 
This path lifts to a path in $\G^{\otimes 2}$ from $(v_i,v_j)$ to $(v_j,v_i)$, so identifying points in $\S_2$ orbits
does not decrease the number of connected components.
\end{proof}

\begin{corollary}
%}\comment{The second statement is general, right?  That is, it does not have the same hypotheses as the first sentence? \color{red}{Yes, fixed.}}%
If $\G$ is a connected, non-bipartite, loopless graph, then $\G^{\odot k}$ is connected. 
%{\color{red}
In general, the number of connected components on $\G^{\odot k}$ is at most $2^{k-1}$.
\end{corollary}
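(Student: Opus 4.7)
The plan is to reduce both parts to statements about the ordinary tensor (Kronecker) power $\G^{\otimes k}$ and then exploit the fact that $\G^{\odot k}$ arises from $\G^{\otimes k}$ by identifying each vertex with its nondecreasing orbit representative under the $\S_k$-action on $[n]^k$.

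First I would establish a \emph{quotient principle}: if $\vec{p}$ and $\vec{q}$ are adjacent in $\G^{\otimes k}$, then their nondecreasing orbit representatives $\vec{i},\vec{j}$ are adjacent in $\G^{\odot k}$. This is immediate from \eqref{eq:GraphWeight}, since the defining sum for $\omega^{\odot k}(\vec{i},\vec{j})$ contains the nonnegative term $\prod_{\ell}\omega(v_{p_\ell},v_{q_\ell})>0$ and, for unweighted $\G$, no cancellation can occur. Hence any walk in $\G^{\otimes k}$ projects to a walk in $\G^{\odot k}$, and the number of connected components of $\G^{\odot k}$ is at most that of $\G^{\otimes k}$.

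For part (a), I would show by induction on $k$ that $\G^{\otimes k}$ is connected and non-bipartite whenever $\G$ is. The base $k=1$ is immediate. For the inductive step I would apply the two-factor form of Weichsel's theorem from \cite{weichsel-1962a} (of which Theorem \ref{Theorem:Connectness} is the diagonal case) to $\G^{\otimes k}=\G^{\otimes(k-1)}\otimes\G$, two connected non-bipartite factors, forcing $\G^{\otimes k}$ to be connected. Non-bipartiteness persists because any odd cycle $v_{i_1},v_{i_2},\ldots,v_{i_{2m+1}}$ of $\G$ lifts via the diagonal embedding $v\mapsto(v,v,\ldots,v)$ to an odd cycle of the same length in $\G^{\otimes k}$, essentially the observation used in the proof of Theorem \ref{Theorem:Subgraph}. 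The quotient principle then yields connectedness of $\G^{\odot k}$.

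For part (b), I would run a parallel induction showing that, for $\G$ connected, $\G^{\otimes k}$ has at most $2^{k-1}$ components. The base $k=1$ is trivial. In the inductive step, each of the at most $2^{k-1}$ components of $\G^{\otimes k}$ is itself a connected graph, and tensoring a connected graph with the connected graph $\G$ produces at most two components by Weichsel, so $\G^{\otimes(k+1)}$ has at most $2\cdot 2^{k-1}=2^k$ components; the quotient principle transfers this bound to $\G^{\odot(k+1)}$. The main obstacle is conceptual rather than computational: one must be comfortable invoking the general (two-factor) form of Weichsel's theorem, which is only implicitly present in Theorem \ref{Theorem:Connectness}, and must verify that the symmetric tensor power really is a quotient in the edge-preserving sense. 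The latter rests on the nonnegativity of each summand in \eqref{eq:GraphWeight}, which is automatic for unweighted graphs, so no further spectral or combinatorial input is needed.
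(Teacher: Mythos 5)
The paper states this corollary without any proof, so there is nothing to compare line by line; your argument is correct and is exactly the natural iteration of the strategy the paper already uses for the $k=2$ case in Corollary \ref{Corollary:Conn}, namely the quotient map $\G^{\otimes k}\to\G^{\odot k}$ (which cannot increase the number of components, by the nonnegativity of the summands in \eqref{eq:GraphWeight}) combined with Weichsel's theorem. Two small points are worth tightening. First, in part (a) you do not actually need $\G^{\otimes(k-1)}$ to be non-bipartite: the two-factor form of Weichsel's theorem only requires \emph{one} non-bipartite factor, and $\G$ itself always serves, so the diagonal odd-cycle lifting is harmless but redundant. Second, and more substantively, your inductive step for part (b) applies Weichsel's two-component bound to each component $H_i$ of $\G^{\otimes k}$ tensored with $\G$; this requires each $H_i$ to contain an edge (an edgeless component $H_i$ would give $|V(H_i)|\cdot|V(\G)|$ isolated vertices in $H_i\otimes\G$, ruining the count). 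You should add the one-line observation that if $\G$ is connected with at least two vertices then every vertex of $\G$, hence every vertex of $\G^{\otimes k}$ (whose degree is a product of positive degrees), has positive degree, so no component of $\G^{\otimes k}$ is edgeless; the single-vertex case is trivial. With that sentence added, the proof is complete.
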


%%%%%%%%%%%%%%%%%%%%%%%%%%%%%%%%%%%%%%%%%%
\section{Properties of $\G^{\odot k}$ for particular graphs $\G$}\label{Section:Particular}

A number of curious features arise when one considers symmetric tensor powers of familiar graphs.
In this section we describe a variety of such phenomena.

%%%%%%%%%%%%%%%%%%%%%%%%%
\subsection{Complete graphs with loops}
The graph $\J_n$ is the graph on $n$ with all pairs of vertices adjacent; its adjacency matrix is the $n \times n$
all-ones matrix.  In particular, there is a loop at each vertex
and $\J_n$ has $n+\binom{n}{2}=\binom{n+1}{2}$ edges; see Figure \ref{Figure:Jn}.

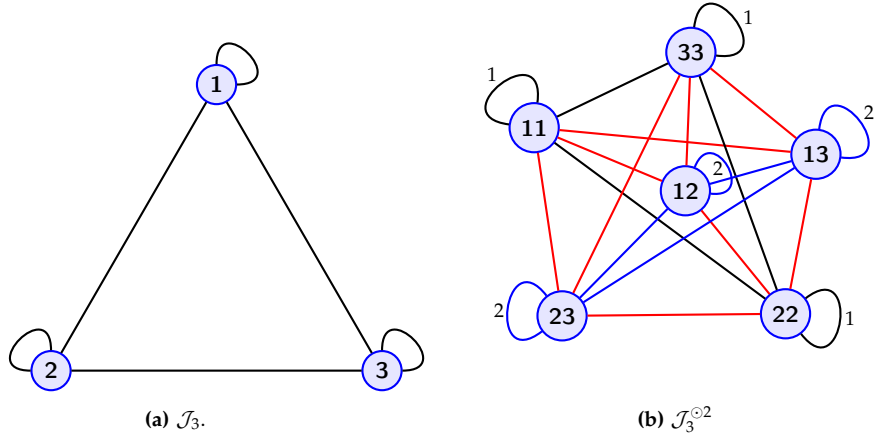
\begin{figure}[h]
\begin{subfigure}[b]{0.425\textwidth}
    \centering
    \begin{tikzpicture}[thick,scale=2.2, every node/.style={scale=0.75},auto, node distance=2cm,  thick,                           node_style/.style={circle,draw=blue,fill=blue!10!,font=\sffamily\Large\bfseries},                          edge_style/.style={draw=black,font=\small}]
\node[node_style] (v0) at (0,1.73) {1} ;
\node[node_style] (v1) at (-1,0) {2} ;
\node[node_style] (v2) at (1,0) {3} ;
\draw[edge_style]  (v0) -- (v1);
\draw[edge_style]  (v0) -- (v2);
\draw[edge_style]  (v1) -- (v2);
\draw (v0) to [out=0,in=90,looseness=5] (v0) ;
\draw (v1) to [out=180,in=90,looseness=5] (v1) ;
\draw (v2) to [out=0,in=90,looseness=5] (v2) ;
\end{tikzpicture} 
    \caption{$\J_3$.}
    \label{Figure:Jn}	        
    \end{subfigure}
\hfill
    \begin{subfigure}[b]{0.525\textwidth}	                
    \centering                        
    \begin{tikzpicture}[rotate=-10, thick,scale=0.75, every node/.style={scale=0.75},auto, node distance=2cm,  thick,                           node_style/.style={circle,draw=blue,fill=blue!10!,font=\sffamily\Large\bfseries},                          edge_style/.style={draw=black,font=\small}]
    \node[node_style] (v0) at (0.0,3.2012) {11} ;
    \node at (-0.9,4){$1$} ;
    \node[node_style] (v1) at (4.9609,0.7239) {22} ;
    \node at (6.1,0.85){$1$} ;
    \node[node_style] (v2) at (2.5035,5.0) {33} ;
    \node at (3.4,5.8){$1$} ;
    \node[node_style] (v3) at (2.8344,2.5648) {12} ;
    \node at (3.3344,3.0648){$2$} ;
    \node[node_style] (v4) at (5.0,3.6043) {13} ;
    \node at (5.8,4.5){$2$} ;
    \node[node_style] (v5) at (1.0666,0.0) {23} ;
    \node at (-0.05,-0.1){$2$} ;
    \draw[edge_style]  (v0) -- (v1);
    \draw[edge_style]  (v0) -- (v2);
    \draw[draw=red,font=\small]  (v0) -- (v3);
    \draw[draw=red,font=\small]  (v0) -- (v4);
    \draw[draw=red,font=\small]  (v0) -- (v5);
    \draw[edge_style]  (v1) -- (v2);
    \draw[draw=red,font=\small]  (v3) -- (v1);
    \draw[draw=red,font=\small]  (v4) -- (v1);
    \draw[draw=red,font=\small]  (v1) -- (v5);
    \draw[draw=red,font=\small]  (v3) -- (v2);
    \draw[draw=red,font=\small]  (v4) -- (v2);
    \draw[draw=red,font=\small]  (v5) -- (v2);
    \draw[draw=blue,font=\small]  (v3) -- (v4);
    \draw[draw=blue,font=\small]  (v3) -- (v5);
    \draw[draw=blue,font=\small]  (v4) -- (v5);
    \draw (v0) to [out=180,in=90,looseness=5] (v0) ;
    \draw (v1) to [out=45,in=-45,looseness=5] (v1) ;
    \draw (v2) to [out=0,in=90,looseness=5] (v2) ;
    \draw[blue] (v3) to [out=0,in=75,looseness=4] (v3) ;
    \draw[blue] (v4) to [out=0,in=90,looseness=5] (v4) ;
    \draw[blue] (v5) to [out=135,in=-135,looseness=5] (v5) ;
    \end{tikzpicture} \caption{$\J_3^{\odot 2}$} 
    \label{Figure:JGraphTb}	        
    \end{subfigure}
    \captionsetup{width=.95\linewidth}	        
    \caption{$\J_3$, the complete graph with loops on three vertices, and its second symmetric tensor power.
    In (b) {\textcolor{red}{red}} edges have weight $\sqrt{2}$, 
    {\textcolor{blue}{blue}} edges have weight $2$, and black edges have weight $1$. }                           
    \label{fig:JGraph}
    \end{figure}

\begin{proposition}
The weight of the edge connecting $\vec{i}$ and $\vec{j}$ in $\J_n^{\odot k}$ 
is $\sqrt{\binom{k}{\vec{m}(\vec{i})}\binom{k}{\vec{m}(\vec{j})}}$.
As unweighted graphs,  $\J_n^{\odot k}=\J_{\binom{n+k-1}{k}}$. 
\end{proposition}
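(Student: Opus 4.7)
The plan is to apply Lemma \ref{Lemma:MatrixRep} directly, exploiting the fact that every entry of the adjacency matrix of $\J_n$ equals $1$.

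First, I would write the adjacency matrix $A$ of $\J_n$ as the $n \times n$ all-ones matrix, so that $[A]_{p_\ell, q_\ell} = 1$ for every choice of indices. Substituting into the formula from Lemma \ref{Lemma:MatrixRep},
\begin{equation*}
[A^{\odot k}]_{\vec{i},\vec{j}} = \frac{1}{\sqrt{\binom{k}{\vec{m}(\vec{i})}\binom{k}{\vec{m}(\vec{j})}}} \sum_{\substack{\vec{p}\in \Orb(\vec{i})\\ \vec{q}\in \Orb(\vec{j})}} \prod_{\ell=1}^k [A]_{p_\ell,q_\ell},
\end{equation*}
every product inside the sum collapses to $1$. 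Hence the double sum just counts pairs $(\vec{p},\vec{q})$, giving $|\Orb(\vec{i})|\cdot |\Orb(\vec{j})| = \binom{k}{\vec{m}(\vec{i})}\binom{k}{\vec{m}(\vec{j})}$. Dividing by the prefactor yields exactly $\sqrt{\binom{k}{\vec{m}(\vec{i})}\binom{k}{\vec{m}(\vec{j})}}$, which is the first claim.

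For the second claim, I would observe that the weight computed above is strictly positive for every pair of nondecreasing $\vec{i},\vec{j}\in [n]^k$, including the case $\vec{i}=\vec{j}$ (which gives a loop of weight $\binom{k}{\vec{m}(\vec{i})}$). Therefore every pair of vertices of $\G^{\odot k}$ is joined by an edge and every vertex carries a loop. Since the number of vertices equals $\binom{n+k-1}{k}$ by Lemma \ref{Lemma:ONB}, the underlying unweighted graph is precisely $\J_{\binom{n+k-1}{k}}$.

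There is no real obstacle here: the proof is essentially a one-line application of Lemma \ref{Lemma:MatrixRep} once one notices that, with an all-ones matrix, the products inside the sum are trivial and the combinatorics reduces to counting orbits. The only thing to be careful about is treating the diagonal entries ($\vec{i}=\vec{j}$) so that one correctly interprets the weighted loops before passing to the underlying unweighted graph.
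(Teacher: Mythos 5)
Your proof is correct and follows exactly the paper's approach: the paper's own proof is the one-line remark that all vertices are connected so the result follows from Lemma \ref{Lemma:MatrixRep}, and you have simply carried out that computation explicitly (products collapse to $1$, the double sum counts $|\Orb(\vec{i})|\cdot|\Orb(\vec{j})|$, and the prefactor turns this into the stated square root). The explicit treatment of loops and the vertex count is a welcome bit of added detail but not a different argument.
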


\begin{proof}
    All vertices are connected, so this follows from Proposition \ref{Lemma:MatrixRep}.
\end{proof}

The weights in the loops of the second symmetric tensor power of $\J_n$ determine 
the size of the orbit of the corresponding vertex; see Figure \ref{Figure:JGraphTb}.
 
\begin{corollary}
    The number of edges in $\J_n^{\odot 2}$ is $\binom{ \binom{n+1}{2}+1}{2}$.
\end{corollary}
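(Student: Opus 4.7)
The plan is to chain together two observations already sitting on the page. First, the preceding proposition identifies $\J_n^{\odot k}$ with $\J_{\binom{n+k-1}{k}}$ as an unweighted graph. Specializing to $k=2$ gives $\J_n^{\odot 2} = \J_{N}$ as unweighted graphs, where $N = \binom{n+1}{2}$.

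Second, I would recall the opening remark of the subsection: for any $m$, the graph $\J_m$ has a loop at every vertex together with one edge between every pair of distinct vertices, giving
\begin{equation*}
|E_{\J_m}| \;=\; m + \binom{m}{2} \;=\; \binom{m+1}{2}.
\end{equation*}
Since the edge count depends only on the underlying unweighted graph, applying this with $m=N=\binom{n+1}{2}$ yields
\begin{equation*}
|E_{\J_n^{\odot 2}}| \;=\; |E_{\J_N}| \;=\; \binom{N+1}{2} \;=\; \binom{\binom{n+1}{2}+1}{2},
\end{equation*}
which is the claimed formula.

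There is essentially no obstacle here: the corollary is a direct substitution, and the only thing to notice is that the bound produced by the preceding proposition on weights is irrelevant for counting edges, since an edge is present whenever its weight is nonzero. If one wanted to make the argument self-contained without invoking the proposition about $\J_n^{\odot k}$, the alternative would be to check via Lemma \ref{Lemma:MatrixRep} that every entry of the adjacency matrix of $\J_n^{\odot 2}$ is strictly positive (all products $\prod_\ell \omega(v_{p_\ell},v_{q_\ell})$ equal $1$), so the underlying graph is $\J_N$ on the $N$ nondecreasing index tuples; then the same count applies.
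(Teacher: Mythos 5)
Your argument is correct and is exactly the route the paper intends: the corollary is an immediate consequence of the preceding proposition ($\J_n^{\odot 2}=\J_{\binom{n+1}{2}}$ as unweighted graphs) together with the edge count $|E_{\J_m}|=m+\binom{m}{2}=\binom{m+1}{2}$ stated at the start of the subsection. The paper gives no separate proof, only noting consistency with Proposition \ref{Prop:Grados}, so your substitution is the whole story.
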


This agrees with Proposition \ref{Prop:Grados}.  The resulting sequence 
$0, 1, 6, 21, 55, 120, 231,\ldots$ is A002817 in OEIS.

%%%%%%%%%%%%%%%%%%%%%%%
\subsection{Path graphs}
The \emph{path graph} $\P_n$ is the graph with adjacency matrix 
$[a_{ij}]\in \M_n$ with $a_{ij} = 1$ if $|i-j|=1$ and $a_{ij} = 0$ otherwise;
see Figures \ref{Figure:Path1} and \ref{Figure:Path3}, which
suggest the connectivity of symmetric tensor powers of path graphs is of interest.

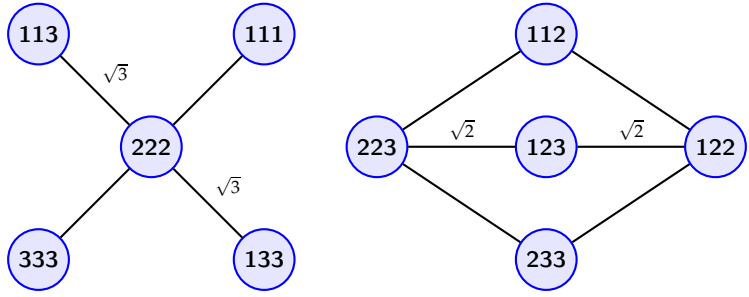
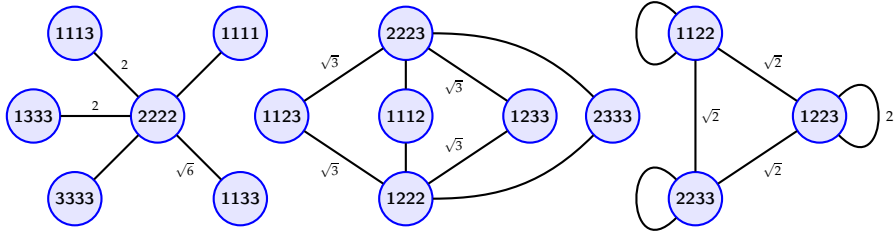
\begin{figure}[h]
\centering		
	        \begin{subfigure}[t]{\textwidth}
	                \centering
                        \begin{tikzpicture}[thick,scale=1.5, every node/.style={scale=0.75},auto, node distance=1.5,  thick,
                           node_style/.style={circle,draw=blue,fill=blue!10!,font=\sffamily\Large\bfseries},
                           edge_style/.style={draw=black,font=\small}]
                            \node[node_style] (v0) at (2,4) {111} ;
                            \node[node_style] (v1) at (1,3) {222} ;
                            \node[node_style] (v2) at (0,2) {333} ;
                            \node[node_style] (v3) at (4.5,4) {112} ;
                            \node[node_style] (v4) at (0,4) {113} ;
                            \node[node_style] (v5) at (6,3) {122} ;
                            \node[node_style] (v6) at (4.5,3) {123} ;
                            \node[node_style] (v7) at (2,2) {133} ;
                            \node[node_style] (v8) at (3,3) {223} ;
                            \node[node_style] (v9) at (4.5,2) {233} ;
                            \draw[edge_style]  (v0)--(v1);
                            \draw[edge_style]  (v1)--(v2);
                            \draw[edge_style]  (v4) edge node{$\sqrt{3}$} (v1);
                            \draw[edge_style]  (v1) edge node{$\sqrt{3}$} (v7);
                            \draw[edge_style]  (v3)--(v5);
                            \draw[edge_style]  (v3)--(v8);
                            \draw[edge_style]  (v6) edge node{$\sqrt{2}$} (v5);
                            \draw[edge_style]  (v5)--(v9);
                            \draw[edge_style]  (v8) edge node{$\sqrt{2}$} (v6);
                            \draw[edge_style]  (v8)--(v9);                        
                            \end{tikzpicture}
                        \caption{$\P_3^{\odot 3}$ has $\lceil \frac{3+1}{2} \rceil = 2$ connected components and no loops,
                        as predicted by Theorem \ref{Theorem:PathGraph}.b}        
                        \label{fig:PathGraph3b}      
	        \end{subfigure}	        
     
	        \begin{subfigure}[t]{1.0\textwidth}
	                \centering
                        \begin{tikzpicture}[thick,scale=0.55, every node/.style={scale=0.55},auto, thick,
                           node_style/.style={circle,draw=blue,fill=blue!10!,font=\sffamily\Large\bfseries},
                           edge_style/.style={draw=black,font=\small}]                        
                            \node[node_style] (v0) at (-4,2) {1111} ;
                            \node[node_style] (v1) at (-6,0) {2222} ;
                            \node[node_style] (v2) at (-8,-2) {3333} ;
                            \node[node_style] (v3) at (0,0) {1112} ;
                            \node[node_style] (v4) at (-8,2) {1113} ;
                            \node[node_style] (v5) at (7,2) {1122} ;
                            \node[node_style] (v6) at (-3,0) {1123} ;
                            \node[node_style] (v7) at (-4,-2) {1133} ;
                            \node[node_style] (v8) at (0,-2) {1222} ;
                            \node[node_style] (v9) at (10,0) {1223} ;
                            \node at (11.7,0){$2$};
                            \node[node_style] (v10) at (3,0) {1233} ;
                            \node[node_style] (v11) at (-9,0) {1333} ;
                            \node[node_style] (v12) at (0,2) {2223} ;
                            \node[node_style] (v13) at (7,-2) {2233} ;
                            \node[node_style] (v14) at (5,0) {2333} ;
                            \draw[edge_style]  (v0)--(v1);
                            \draw[edge_style]  (v1)--(v2);
                            \draw[edge_style]  (v4) edge node{$2$} (v1);
                            \draw[edge_style]  (v7) edge node{$\sqrt{6}$} (v1);
                            \draw[edge_style]  (v11) edge node{$2$} (v1);
                            \draw[edge_style]  (v3)--(v8);
                            \draw[edge_style]  (v3)--(v12);
                            \draw[edge_style]  (v5) edge node{$\sqrt{2}$} (v9);
                            \draw[edge_style]  (v5) edge node{$\sqrt{2}$} (v13);
                            \draw[edge_style]  (v8) edge node{$\sqrt{3}$} (v6);
                            \draw[edge_style]  (v6) edge node{$\sqrt{3}$} (v12);
                            \draw[edge_style]  (v8) edge node{$\sqrt{3}$} (v10);
                            \draw[edge_style]  (v8)  to [out=0, in=-135, looseness=1] (v14);
                            \draw[edge_style]  (v9) edge node{$\sqrt{2}$} (v13);
                            \draw[edge_style]  (v10) edge node{$\sqrt{3}$} (v12);
                            \draw[edge_style]  (v12) to [out=0, in=135, looseness=1] (v14);
                            \draw (v5) to [out=-135,in=135,looseness=5] (v5) ;
                            \draw (v9) to [out=45,in=-45,looseness=5] (v9) ;
                            \draw (v13) to [out=-135,in=135,looseness=5] (v13) ;   
                            \end{tikzpicture}
                        \caption{$\P_3^{\odot 4}$ has $\lceil \frac{4+1}{2} \rceil = 3$ connected components
                        and $\binom{3 + 2-2}{2} = 3$ loops, as predicted by Theorem \ref{Theorem:PathGraph}.a.}        
                        \label{fig:PathGraph4b}      
	        \end{subfigure}
	            \captionsetup{width=.95\linewidth}	        
                        \caption{Symmetric tensor powers $\P_3^{\odot 3}$ and $\P_3^{\odot 4}$
                        of the path graph $\P_3$ on three vertices. 
                        Only nonzero edge weights unequal to $1$ are included.}              
\label{Figure:P3}
\end{figure}

\begin{theorem}\label{Theorem:PathGraph}
    $\P_n^{\odot k}$ has $\lceil \frac{k+1}{2} \rceil$ connected components. 
    (a) If $k=2\ell$, then one of those components contains $\binom{n+\ell-2}{\ell}$ loops.
    (b) If $k$ is odd, then $\P_n^{\odot k}$ contains no loops.
\end{theorem}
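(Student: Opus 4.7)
The plan is to separate the analysis of loops (which handles (b) and the loop count in (a)) from the analysis of connected components. For loops, Lemma~\ref{Lemma:MatrixRep} shows that $\P_n^{\odot k}$ carries a loop at $\vec{i}$ iff there exists $\pi \in \S_k$ with $|i_{\pi(j)} - i_j| = 1$ for every $j \in [k]$. Decomposing $\pi$ into cycles, the differences $i_{\pi(j)} - i_j \in \{\pm 1\}$ sum to $0$ around each cycle, forcing every cycle length to be even; in particular, no such $\pi$ exists when $k$ is odd, establishing (b). When $k = 2\ell$, pairing alternate edges of each even cycle of $\pi$ expresses the multiset $\vec{i}$ as a disjoint union of $\ell$ adjacent pairs $\{a, a+1\}$ with $a \in [n-1]$, and conversely any such partition yields an involution $\pi$. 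A greedy argument on the smallest entry of $\vec{i}$ then shows this pair decomposition is unique as a multiset of pairs, giving a bijection between loop vertices and multisets of size $\ell$ drawn from $[n-1]$, and hence $\binom{(n-1)+\ell-1}{\ell} = \binom{n+\ell-2}{\ell}$ loops in total. Because every adjacent pair contributes exactly one odd and one even entry, all loop vertices satisfy $o(\vec{i}) = \ell$, so they sit inside a single component.

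For the component count, write $o(\vec{i})$ for the number of odd entries of $\vec{i}$. Every edge of $\P_n^{\odot k}$ arises from a step in $\P_n^{\otimes k}$ that moves every coordinate by $\pm 1$ and so flips the parity of each entry; traversing an edge therefore sends $o$ to $k - o$. Along any walk $o$ is confined to $\{o(\vec{i}_0), k - o(\vec{i}_0)\}$, so the unordered pair $\{o(\vec{i}), k - o(\vec{i})\}$ is a walk invariant, yielding at most $\lceil (k+1)/2 \rceil$ components. For the matching lower bound, I would identify components of $\P_n^{\odot k}$ with $\S_k$-orbits of components of $\P_n^{\otimes k}$, then apply Theorem~\ref{Theorem:Connectness} (Weichsel) inductively: since $\P_n$ is connected and bipartite, $\P_n^{\otimes k}$ has exactly $2^{k-1}$ connected components, indexed by parity vectors $(\epsilon_1,\ldots,\epsilon_k) \in \{0,1\}^k$ modulo the global flip $\epsilon \leftrightarrow (1-\epsilon_1,\ldots,1-\epsilon_k)$. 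The $\S_k$-action permutes coordinates, so orbits are parametrized by the number of $1$'s modulo $o \leftrightarrow k - o$, giving precisely $\lceil (k+1)/2 \rceil$.

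The main anticipated obstacle is making the Weichsel iteration watertight---in particular verifying that each of the $2^{k-1}$ parity components of $\P_n^{\otimes k}$ is itself connected (rather than merely that they are distinct). If that induction proves technical, a self-contained alternative is to reduce each vertex in class $r = \min(o(\vec{i}), k - o(\vec{i}))$ directly to the canonical configuration $(\underbrace{1,\ldots,1}_{r}, \underbrace{2,\ldots,2}_{k-r})$ through an explicit sequence of moves, exploiting pairs of tokens at adjacent positions as ``swap partners'' that keep some coordinates in place while others shift. Combined with the invariant above, this yields exactly one component per class and completes (a).
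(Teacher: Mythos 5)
Your proposal is correct in substance but reaches the theorem by a genuinely different route in its most interesting part, the loop count. The paper characterizes loops via the parity identity $\vec{m}(\vec{j})_{\text{odd}}=\vec{m}(\vec{j})_{\text{even}}$ (which kills the odd case, as you also do) and then counts loops for $k=2\ell$ by the recursion $a_{n,\ell}=a_{n-1,\ell}+a_{n,\ell-1}$ with $a_{1,\ell}=0$, $a_{n,1}=n-1$, obtained by deleting or decrementing the top coordinate. Your argument instead extracts from Lemma \ref{Lemma:MatrixRep} a permutation $\pi$ with $|i_{\pi(j)}-i_j|=1$ for all $j$, forces every cycle of $\pi$ to have even length (the $\pm1$ increments telescope to zero), and converts alternate cycle edges into a partition of the multiset $\vec{i}$ into adjacent pairs $\{a,a+1\}$; the greedy uniqueness of that partition then gives a clean bijection with multisets of size $\ell$ from $[n-1]$ and hence $\binom{n+\ell-2}{\ell}$ directly, with no recursion. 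That is a nicer and more structural proof of part (a)'s count. For the components, your invariant $\{o(\vec{i}),k-o(\vec{i})\}$ is exactly the paper's $\vec{m}(\vec{i})_{\text{odd}}/\vec{m}(\vec{i})_{\text{even}}$ invariant; your primary plan (iterate Weichsel on $\P_n^{\otimes k}$ and pass to $\S_k$-orbits of its $2^{k-1}$ parity components) differs from the paper, which instead reduces every vertex by explicit edge moves to a canonical $\vec{i}_{a,b}$ with $\vec{m}=(a,b,0,\dots,0)$ --- and your stated fallback is essentially that same reduction. The Weichsel route buys a conceptual picture of where the $\lceil(k+1)/2\rceil$ comes from ($\#\{1\text{'s}\}$ in a parity vector modulo global flip), at the cost of needing the two-factor connectivity statement for each (connected, bipartite) component of $\P_n^{\otimes(k-1)}$ tensored with $\P_n$, plus the observation that the quotient to $\G^{\odot k}$ creates no spurious edges because all weights are nonnegative.

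Two small points to fix. First, you have the directions of your bounds swapped: the walk invariant shows that distinct classes lie in distinct components, i.e.\ it gives the \emph{lower} bound of $\lceil(k+1)/2\rceil$ (all classes are nonempty for $n\ge2$), while connectivity within each class --- the Weichsel/canonical-form step --- supplies the \emph{upper} bound. Second, the closing claim of your loop paragraph, that all loop vertices ``sit inside a single component'' because $o(\vec{i})=\ell$, is only licensed once the within-class connectivity is proved, so it should be stated after, not before, the component analysis.
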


\begin{proof}
%\color{blue}\comment{Can you please harmonize this with the new notation?  The vertices in the tensor power graph should not be $\vec{v}$ and $\vec{w}$, since we're using $V_{\G} = \{v_1,v_2,\ldots,v_n\}$ and this'll lead to confusion. {\color{red}{done.}}}% 
If $\vec{i}$ and $\vec{j}$ share an edge in $\P_n^{\odot k}$, then
\begin{align*}
    \vec{m}(\vec{i})_{\text{odd}}=\sum _{\ell \text{ odd}}\vec{m}(\vec{i})_{\ell}&=\sum _{\ell \text{ even}}\vec{m}(\vec{j})_{\ell}=\vec{m}(\vec{j})_{\text{even}},\\
    \vec{m}(\vec{i})_{\text{even}}=\sum _{\ell \text{ even}}\vec{m}(\vec{i})_{\ell}&=\sum _{\ell \text{ odd}}\vec{m}(\vec{j})_{\ell}=\vec{m}(\vec{j})_{\text{odd}},
\end{align*}
because for every $r\in [n]$ $\vec{m}(\vec{i})_r$ is splitted and added to $\vec{m}(\vec{j})_{r+1}$ and $\vec{m}(\vec{j})_{r-1}$ if possible. 

Consider vertices $\vec{i}_{a,b}$ with $a+b=k$ and $a\geq b$ such that $\vec{m}(\vec{i}_{a,b})=(a,b,0,\cdots,0)$. All of them lie in 
different connected components, 
 otherwise if $\vec{i}_{a_1,b_1}$ and $\vec{i}_{a_2,b_2}$ with $a_1>a_2$ are in the same component, then $a_1=a_2$ or $a_1=b_2$ which is a contradiction.
 
 Furthermore, every vertex $\vec{i}=(v_1,v_2,\cdots ,v_k)$ is in the same connected component with a vertex of the form $\vec{i}_{a,b}$. To see
  this, consider moving from $\vec{m}(\vec{i})=(v_1,v_2,\cdots ,v_n)$ to $\vec{m}(\vec{i}')=(v_2,v_1+v_3,v_4,\cdots ,v_n,0)$ until the only nonzero entries are the first two, if needed swap the first two coordinates. This implies that the number of connected components is the same as the number of different vertices of the form $\vec{i}_{a,b}$.  There are $ \lceil \frac{k+1}{2}  \rceil$ such vertices, because there are $k+1$ ways to choose $a,b$ such that $a+b=k$ and $b \leq a$ gives the correct count.
  
There are no loops for $k$ odd. Otherwise a loop in $\vec{j}$ implies  $\vec{m}(\vec{j})_{\text{even}} = \vec{m}(\vec{j})_{\text{odd}}$. This 
contradicts  the parity of $k$. For $k$ even, write $k=2\ell$ and let  $a_{n,\ell}$ the number of loops in $\P_n^{\odot k}$. They 
 satisfy the recursion  
\begin{equation*}
a_{n,\ell}=a_{n-1,\ell}+a_{n,\ell-1},
\end{equation*}
with  initial conditions $a_{1,\ell}=0$ and $a_{n,1}=n-1$. The 
 argument is by induction. Note that  if $\vec{i}$ contains a loop, then either $\vec{m}(\vec{i})_n=0$ or not: if it is $0$ remove it, getting a vertex in $\P_{n-1}^{\odot k}$ that contain a loop. If $\vec{m}(\vec{i})_n\neq 0$, notice that $\vec{m}(\vec{i})_{n-1}\neq 0$ because otherwise it was not a loop and so consider $\vec{x'}$ given by $\vec{m}(\vec{x'})=(\vec{m}(\vec{i})_1,\cdots ,\vec{m}(\vec{i})_{n-1}-1,\vec{m}(\vec{i})_{n}-1)$, this is a loop in $\P_{n}^{\odot (k-2)}$. This implies the result by the usual recursions for binomial coefficients. 
\end{proof}

Figures \ref{Figure:P3} and \ref{Figure:P6} illustrate the previous theorem.

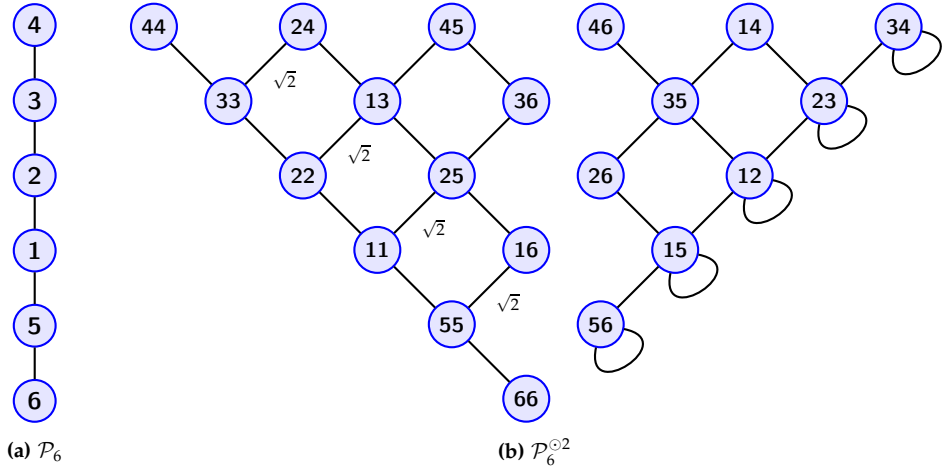
\begin{figure}
\centering    
\begin{subfigure}[t]{0.1\textwidth}
\centering                        
    \begin{tikzpicture}[thick,scale=0.5, every node/.style={scale=0.8},auto, node distance=2cm,  thick,                           node_style/.style={circle,draw=blue,fill=blue!10!,font=\sffamily\Large\bfseries},                          edge_style/.style={draw=black,font=\small}]
    \node[node_style] (v0) at (0,0) {1} ;
    \node[node_style] (v1) at (0,2) {2} ;
    \node[node_style] (v2) at (0,4) {3} ;
    \node[node_style] (v3) at (0,6) {4} ;
    \node[node_style] (v4) at (0,-2) {5} ;
    \node[node_style] (v5) at (0,-4) {6} ;
    \draw[edge_style]  (v0)--(v1);
    \draw[edge_style]  (v0)--(v4);
    \draw[edge_style]  (v1)--(v2);
    \draw[edge_style]  (v2)--(v3);
    \draw[edge_style]  (v4)--(v5);
\end{tikzpicture} 
\caption{$\P_6$}                         
\label{Figure:P6-1}	        
\end{subfigure}
\hfill
\begin{subfigure}[t]{0.85\textwidth}	                
\centering                        
\begin{tikzpicture}[thick,rotate=45,scale=0.7, every node/.style={scale=0.7},auto, node distance=2cm,  thick,                           node_style/.style={circle,draw=blue,fill=blue!10!,font=\sffamily\Large\bfseries},                          edge_style/.style={draw=black,font=\small}]
\node[node_style] (v0) at (0,0) {11} ;
\node[node_style] (v1) at (0,2) {22} ;
\node[node_style] (v2) at (0,4) {33} ;
\node[node_style] (v3) at (0,6) {44} ;
\node[node_style] (v4) at (0,-2) {55} ;
\node[node_style] (v5) at (0,-4) {66} ;
\node[node_style] (v6) at (6,-4) {12} ;
\node[node_style] (v7) at (2,2) {13} ;
\node[node_style] (v8) at (8,-2) {14} ;
\node[node_style] (v9) at (4,-4) {15} ;
\node[node_style] (v10) at (2,-2) {16} ;
\node[node_style] (v11) at (8,-4) {23} ;
\node[node_style] (v12) at (2,4) {24} ;
\node[node_style] (v13) at (2,0) {25} ;
\node[node_style] (v14) at (4,-2) {26} ;
\node[node_style] (v15) at (10,-4) {34} ;
\node[node_style] (v16) at (6,-2) {35} ;
\node[node_style] (v17) at (4,0) {36} ;
\node[node_style] (v18) at (4,2) {45} ;
\node[node_style] (v19) at (6,0) {46} ;
\node[node_style] (v20) at (2,-4) {56} ;
\draw[edge_style]  (v0)--(v1);
\draw[edge_style]  (v0)--(v4);
\draw[edge_style]  (v13) edge node{$ \sqrt{2} $} (v0);
\draw[edge_style]  (v1)--(v2);
\draw[edge_style]  (v7) edge node{$ \sqrt{2} $} (v1);
\draw[edge_style]  (v2)--(v3);
\draw[edge_style]  (v12) edge node{$ \sqrt{2} $} (v2);
\draw[edge_style]  (v4)--(v5);
\draw[edge_style]  (v10) edge node{$ \sqrt{2} $} (v4);
\draw[edge_style]  (v6)--(v16);
\draw[edge_style]  (v6)--(v9);
\draw[edge_style]  (v6)--(v11);
\draw[edge_style]  (v7)--(v18);
\draw[edge_style]  (v7)--(v12);
\draw[edge_style]  (v7)--(v13);
\draw[edge_style]  (v8)--(v16);
\draw[edge_style]  (v8)--(v11);
\draw[edge_style]  (v9)--(v20);
\draw[edge_style]  (v9)--(v14);
\draw[edge_style]  (v10)--(v13);
\draw[edge_style]  (v11)--(v15);
\draw[edge_style]  (v13)--(v17);
\draw[edge_style]  (v14)--(v16);
\draw[edge_style]  (v16)--(v19);
\draw[edge_style]  (v17)--(v18);
\draw (v6) to [out=-135,in=-45,looseness=5] (v6) ;
\draw (v9) to [out=-135,in=-45,looseness=5] (v9) ;
\draw (v11) to [out=-135,in=-45,looseness=5] (v11) ;
\draw (v15) to [out=-135,in=-45,looseness=5] (v15) ;
\draw (v20) to [out=-135,in=-45,looseness=5] (v20) ;
\end{tikzpicture} 
\caption{$\P_6^{\odot 2}$}                         
\label{Figure:P6-2}	        
\end{subfigure}
    \captionsetup{width=.95\linewidth}	        
\caption{Path graph on six vertices and its second symmetric tensor power.
Only edge weights unequal to $1$ are included.}    
\label{Figure:P6}
\end{figure}

%%%%%%%%%%%%%%%%%%%%%%%%%%%%%%%%%%%%%
\subsection{Cycle graphs}
The \emph{cycle graph} $\C_n$ is the graph with adjacency matrix $[a_{ij}]\in \M_n$, in which
$a_{ij} = 1$ if $|i-j| \in \{1, n-1\}$ and $a_{ij}=0$ otherwise; see Figure \ref{Figure:CycleGraph}.

\begin{figure}[h]
    \begin{subfigure}[t]{0.475\textwidth}
	                \centering
                        \begin{tikzpicture}[thick,scale=1.0, every node/.style={scale=1},auto, node distance=2,  thick,
                           node_style/.style={circle,draw=blue,fill=blue!10!,font=\sffamily\Large\bfseries},
                           edge_style/.style={draw=black,font=\small}]
                        
                            \node[node_style] (v1) at (-2,0) {1};
                            \node[node_style] (v2) at (0,2) {2};
                            \node[node_style] (v3) at (2,0) {3};
                            \node[node_style] (v4) at (2,-2) {4};
                            \node[node_style] (v5) at (-2,-2) {5};

                            \draw[edge_style]  (v1) edge node{1} (v2);
                            \draw[edge_style]  (v2) edge node{1} (v3);
                            \draw[edge_style]  (v3) edge node{1} (v4);
                            \draw[edge_style]  (v4) edge node{1} (v5);
                          \draw[edge_style]  (v1) edge node{1} (v5);
                            
                            \end{tikzpicture}
                        \caption{$\C_5$.} 
                        \label{fig:CycleGrapha}
	        \end{subfigure}
	        \begin{subfigure}[t]{0.475\textwidth}
	                \centering
                        \begin{tikzpicture}[thick,scale=0.3, every node/.style={scale=0.5},auto, node distance=1.5,  thick,
                           node_style/.style={circle,draw=blue,fill=blue!10!,font=\sffamily\Large\bfseries},
                           edge_style/.style={draw=black,font=\small}]
                        
                            \node[node_style] (v1) at (-2,0) {11};
                            \node[node_style] (v2) at (0,2) {22};
                            \node[node_style] (v3) at (2,0) {33};
                            \node[node_style] (v4) at (2,-2) {44};
                            \node[node_style] (v5) at (-2,-2) {55};
                            \node[node_style] (v21) at (-4,0) {25};
                            \node[node_style] (v22) at (0,4) {13};
                            \node[node_style] (v23) at (4,0) {24};
                            \node[node_style] (v24) at (4,-4) {35};
                            \node[node_style] (v25) at (-4,-4) {14};
                                                        \node[node_style] (v31) at (-6,0) {34};
                            \node[node_style] (v32) at (0,6) {45};
                            \node[node_style] (v33) at (6,0) {15};
                            \node[node_style] (v34) at (6,-6) {12};
                            \node[node_style] (v35) at (-6,-6) {23};
                            
                           \draw[edge_style]  (v1) edge node{} (v21);
                           \draw[edge_style]  (v31) edge node{} (v21);
                           \draw[edge_style]  (v2) edge node{} (v22);
                           \draw[edge_style]  (v32) edge node{} (v22);
                           \draw[edge_style]  (v3) edge node{} (v23);
                           \draw[edge_style]  (v33) edge node{} (v23);
                           \draw[edge_style]  (v4) edge node{} (v24);
                           \draw[edge_style]  (v34) edge node{} (v24);
                           \draw[edge_style]  (v5) edge node{} (v25);
                           \draw[edge_style]  (v35) edge node{} (v25);
                            \draw[edge_style]  (v1) edge node{} (v2);
                            \draw[edge_style]  (v2) edge node{} (v3);
                            \draw[edge_style]  (v3) edge node{} (v4);
                            \draw[edge_style]  (v4) edge node{} (v5);
                          \draw[edge_style]  (v1) edge node{} (v5);
                          
                          \draw[edge_style]  (v21) edge node{} (v22);
                            \draw[edge_style]  (v22) edge node{} (v23);
                            \draw[edge_style]  (v23) edge node{} (v24);
                            \draw[edge_style]  (v24) edge node{} (v25);
                          \draw[edge_style]  (v21) edge node{} (v25);
                          \draw[edge_style]  (v31) edge node{} (v32);
                            \draw[edge_style]  (v32) edge node{} (v33);
                            \draw[edge_style]  (v33) edge node{} (v34);
                            \draw[edge_style]  (v34) edge node{} (v35);
                          \draw[edge_style]  (v31) edge node{} (v35);
                          \draw (v31) to [out=0+135,in=225,looseness=5] (v31) ;
                          \draw (v32) to [out=135,in=45,looseness=5] (v32) ;
                          \draw (v33) to [out=0+45,in=0-45,looseness=5] (v33) ;
                          \draw (v34) to [out=0,in=0-90,looseness=5] (v34) ;
                          \draw (v35) to [out=-90,in=-90-90,looseness=5] (v35) ;
                            
                            \end{tikzpicture}
                        \caption{$\C_5^{\odot 2}$.}        \label{fig:CycleGraphb}      
	        \end{subfigure}
    \captionsetup{width=.95\linewidth}	        
    \caption{The cycle graph $\C_5$ and its second symmetric tensor power.}
    \label{Figure:CycleGraph}
\end{figure}
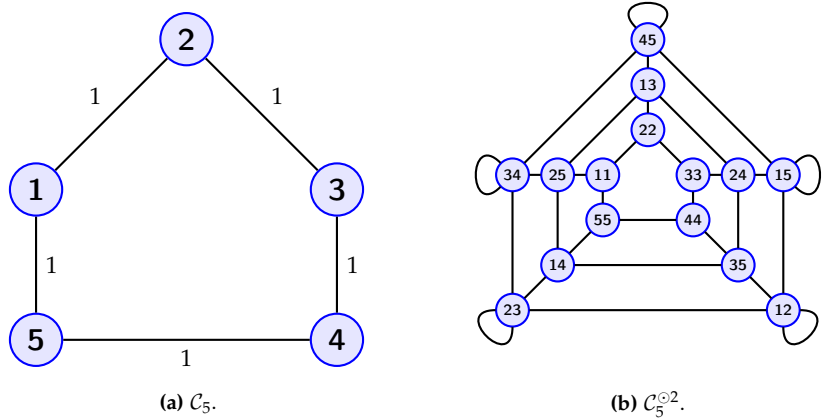

\begin{proposition}
(a) $\C_n^{\odot k}$ has $ \lceil \frac{k+1}{2} \rceil$ connected components if $n$ is even.
(b) $\C_n^{\odot k}$ is connected if $n$ is odd.
\end{proposition}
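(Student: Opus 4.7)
My plan is to deduce both parts from the quotient relationship between $\C_n^{\otimes k}$ and $\C_n^{\odot k}$. The vertex map $[n]^k \twoheadrightarrow V(\C_n^{\odot k})$ sending each tuple to the nondecreasing representative of its $\S_k$-orbit carries edges of the tensor power $\C_n^{\otimes k}$ to edges or loops of $\C_n^{\odot k}$ by \eqref{eq:GraphWeight}. In particular, the components of $\C_n^{\odot k}$ are in bijection with the $\S_k$-orbits of components of $\C_n^{\otimes k}$.

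For (b), when $n$ is odd, $\C_n$ contains the odd cycle $1 - 2 - \cdots - n - 1$, so it is connected and non-bipartite. Theorem \ref{Theorem:Connectness}.a gives that $\C_n^{\otimes 2}$ is connected; invoking the standard generalization of Weichsel's theorem (namely, $\G \otimes \H$ is connected whenever both factors are connected and at least one is non-bipartite), a simple induction on $k$ yields connectivity of $\C_n^{\otimes k}$ for every $k \geq 1$, and hence of $\C_n^{\odot k}$.

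For (a), when $n$ is even, $\C_n$ is bipartite with parts $O = \{1, 3, \ldots, n-1\}$ and $E = \{2, 4, \ldots, n\}$, and every edge of $\C_n$ joins $O$ to $E$. Arguing as in the proof of Theorem \ref{Theorem:PathGraph}, if $\vec{i} \sim \vec{j}$ in $\C_n^{\odot k}$, then a coordinate matching forces $\vec{m}(\vec{i})_O = \vec{m}(\vec{j})_E$ and $\vec{m}(\vec{i})_E = \vec{m}(\vec{j})_O$, so the unordered pair $\{\vec{m}(\vec{i})_O, \vec{m}(\vec{i})_E\}$ is constant on every connected component. Since this pair takes values in $\{\{a, k-a\} : 0 \leq a \leq \lfloor k/2 \rfloor\}$ and each value is realized by the vertex with multiset $(a, k-a, 0, \ldots, 0)$, the number of components is at most $\lfloor k/2 \rfloor + 1 = \lceil (k+1)/2 \rceil$. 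For the matching lower bound, I would inductively extend Theorem \ref{Theorem:Connectness}.b to show that $\C_n^{\otimes k}$ has exactly $2^{k-1}$ components, parameterized by unordered pairs $\{\sigma, \bar\sigma\}$ of complementary parity signatures $\sigma \in \{O, E\}^k$. The $\S_k$-action on parity signatures preserves only the multiset, so two parity pairs $\{\sigma, \bar\sigma\}$ and $\{\sigma', \bar\sigma'\}$ lie in the same $\S_k$-orbit precisely when $\{|\sigma|, k-|\sigma|\} = \{|\sigma'|, k-|\sigma'|\}$, giving exactly $\lceil (k+1)/2 \rceil$ $\S_k$-orbits of components, and hence the same number of components in $\C_n^{\odot k}$.

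The main obstacle is the inductive step for $\C_n^{\otimes k}$: one must verify that each of the $2^{k-2}$ components of $\C_n^{\otimes (k-1)}$ is itself connected and bipartite, so that tensoring with the connected bipartite $\C_n$ and applying Theorem \ref{Theorem:Connectness}.b splits it into exactly two new components. Alternatively, if the tensor-power bookkeeping becomes cumbersome, one can argue directly inside $\C_n^{\odot k}$ using two families of elementary moves---the uniform rotation $\vec{i} \mapsto \vec{i} + \vec{1} \pmod{n}$, which is a single edge since each coordinate shifts by $+1$, together with independent $\pm 1$ local moves on individual coordinates---to reduce any vertex in a given parity class to the canonical representative of multiset type $(a, k-a, 0, \ldots, 0)$.
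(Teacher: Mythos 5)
Your approach is sound but genuinely different from the paper's. The paper proves (b) by an explicit walk on multiplicity vectors: it first reduces every vertex to a canonical representative $\vec{i}_{a,b}$ with $\vec{m}=(a,b,0,\dots,0)$ (reusing the reduction from the proof of Theorem \ref{Theorem:PathGraph}) and then writes down, step by step, a path from $\vec{i}_{a_1,b_1}$ to $\vec{i}_{a_2,b_2}$ that shuttles mass around the odd cycle; for (a) it simply says the even case goes as for $\P_n$. You instead work upstairs in $\C_n^{\otimes k}$ and push everything through the quotient. Your key structural observation --- that since $\S_k$ acts on $\C_n^{\otimes k}$ by automorphisms and an edge of $\C_n^{\odot k}$ exists iff some pair of orbit representatives is adjacent (true here because all weights are nonnegative), the components of $\C_n^{\odot k}$ are exactly the $\S_k$-orbits of components of $\C_n^{\otimes k}$ --- is correct and cleaner than the paper's hands-on computation; the paper only uses this idea implicitly, and only for $k=2$, in Corollary \ref{Corollary:Conn}. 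What your route costs is an external input: the paper states Weichsel's theorem only for $\G^{\otimes 2}$, whereas you need the two-factor version (a tensor product of connected graphs is connected iff some factor is non-bipartite, and splits into exactly two components if both are bipartite) to run the induction giving connectivity of $\C_n^{\otimes k}$ for $n$ odd and exactly $2^{k-1}$ components for $n$ even. That version is standard and the bipartiteness of each intermediate component follows from projecting closed walks to a bipartite factor, so this is a citation burden rather than a gap. Two small corrections: your parity invariant $\{\vec{m}(\vec{i})_O,\vec{m}(\vec{i})_E\}$, being constant on components and taking $\lceil\frac{k+1}{2}\rceil$ distinct values, gives a \emph{lower} bound on the number of components, not the upper bound you claim; and the exact count then comes entirely from the orbit-of-components argument, which makes the invariant computation redundant. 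Neither issue affects the conclusion, but you should fix the direction of that inequality and either prove or properly cite the general Weichsel theorem before calling the argument complete.
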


\begin{proof}
%\comment{Can you elaborate on the red part?  Also please harmonize the notation with the rest of the paper.  This part uses $\vec{x}$ as vertices in the new group, and this is a letter that hasn't been used in the rest of the paper.{\color{red}Done.}}%
 %   {\color{red}
    (a) The case in which $n$ is even can be done in a similar fashion as in the proof of Proposition \ref{Theorem:PathGraph}.
    
    \smallskip\noindent(b)
    % \color{blue}
     If $n$ is odd and $a_1>a_2$, there is a path 
    from $\vec{i}_{a_1,b_1}$ to $\vec{i}_{a_2,b_2}$, so $\C_{n}^{\odot k}$ is connected:
    {\small
    \begin{align*}
        \vec{m}(\vec{i}_{a_1,b_1})
        &=(a_1,b_1,\underbrace{0\cdots,0}_{\text{odd}})
        \rightarrow 
        (b_1,0,\underbrace{0\cdots,0}_{\text{even}},a_1)
        \rightarrow 
        (a_2,b_1,\underbrace{0\cdots,0}_{\text{odd}},a_2-a_1,0)\\
        &\rightarrow 
        (0,0,b_1,\underbrace{0\cdots,0}_{\text{odd}},a_2-a_1,0,a_2)
        \rightarrow 
        (a_2,b_1,0,\underbrace{0\cdots,0}_{\text{even}},a_2-a_1,0,0,0)\\
        &\rightarrow\cdots\rightarrow
        (0,0,b_1,0,a_2-a_1,0,\cdots ,0,a_2)
        \rightarrow 
        (a_2,b_1,0,a_2-a_1,0,\cdots,0)\\
        &\rightarrow 
        (0,0,b_1+a_2-a_1,0,\cdots,a_2)
        \rightarrow (a_2,b_1+a_2-a_1,0,\cdots,0)=\vec{m}(\vec{i}_{a_2,b_2}). \qedhere
    \end{align*}
    }
\end{proof}

%%%%%%%%%%%%%%%%%%%%%%%%%%%%%%
\subsection{Complete bipartite graphs}
The \emph{complete bipartite graph} $\K_{n,m}$ has vertex set $[n+m]$ and edges connecting every vertex in $[n]$ with 
every vertex  in $[n+m]\setminus[n]$.
A vertex in $\K_{n,m}^{\odot k}$ is formed as follows: choose $0 \leq i \leq n$ and compose 
$i$ elements from $[n]$ and $k-i$ elements from $[n+m]\setminus [n]$. Observe that if $i<k-i$ this connection is possible by taking 
 $i$ elements from $[n+m]\setminus [n]$ and $k-i$ elements from $[n]$. For every $2i<k$, this forms a new bipartite graph
\begin{equation*}
\K_{\binom{i+n-1}{i}\binom{k-i+m-1}{k-i},\binom{i+m-1}{i}\binom{k-i+n-1}{k-i}}.
\end{equation*}
For $k$ even, write $2i=k$. Then
  any vertex having $i$ elements from each block is connected to every other 
 one, including  itself. This yields  a copy of the graph $\J_{\binom{k/2+n-1}{k/2}\binom{k/2+m-1}{k/2}}$. This proves the next result.
 
\begin{theorem}\label{Theorem:Bite}
For $k,n,m \in \N$, as unweighted graphs we have
\begin{align*}
    \K_{n,m}^{\odot k}&=\begin{cases}
    \displaystyle \bigcup _{i=0}^{\lfloor \frac{k-1}{2}\rfloor}
    \K_{\binom{i+n-1}{i}\binom{k-i+m-1}{k-i},\binom{i+m-1}{i}\binom{k-i+n-1}{k-i}} & \text{if $k$ is odd},\\
        \J_{\binom{k/2+n-1}{k/2}\binom{k/2+m-1}{k/2}} \cup \displaystyle \bigcup _{i=0}^{\lfloor \frac{k-1}{2}\rfloor} \K_{\binom{i+n-1}{i}\binom{k-i+m-1}{k-i},\binom{i+m-1}{i}\binom{k-i+n-1}{k-i}} & \text{otherwise}.
    \end{cases}
\end{align*}
This graph has $1+\lfloor \frac{k-1}{2}\rfloor +\frac{1 + (-1)^k}{2}=\lceil \frac{k+1}{2}\rceil$ connected components.
%\comment{Is the notation $[2|k] = \frac{1}{2}(1 + (-1)^k)$ standard? \color{red}{Changed.}}%
\end{theorem}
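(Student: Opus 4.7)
The plan is to introduce the statistic $t(\vec{i}) := |\{\ell : i_\ell \in [n]\}| \in \{0,1,\ldots,k\}$ and partition the vertex set of $\K_{n,m}^{\odot k}$ accordingly; a direct count shows exactly $\binom{j+n-1}{j}\binom{k-j+m-1}{k-j}$ vertices satisfy $t(\vec{i}) = j$. The first step exploits the bipartite structure of $\K_{n,m}$: by Lemma \ref{Lemma:MatrixRep} each entry of $A^{\odot k}$ is a sum of products $\prod_\ell [A]_{p_\ell,q_\ell}$ over $\vec{p} \in \Orb(\vec{i})$, $\vec{q} \in \Orb(\vec{j})$, and since every edge of $\K_{n,m}$ connects $[n]$ with $[n+m]\setminus[n]$, such a product is nonzero only when $p_\ell$ and $q_\ell$ sit on opposite sides of the bipartition at every coordinate. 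Counting coordinates forces $t(\vec{i}) + t(\vec{j}) = k$, so adjacency pairs the class $t = j$ only with the class $t = k - j$.

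Next I would prove the converse: any two vertices with $t(\vec{i}) = j$ and $t(\vec{j}) = k-j$ are adjacent. The key move is to select $\vec{p} \in \Orb(\vec{i})$ whose first $j$ coordinates lie in $[n]$ and whose remaining $k-j$ coordinates lie in $[n+m]\setminus[n]$, paired with $\vec{q} \in \Orb(\vec{j})$ arranged in the complementary way. Coordinate-wise bipartite adjacency then makes the corresponding product of matrix entries equal to $1$, hence the $(\vec{i},\vec{j})$ entry of $A^{\odot k}$ is strictly positive. Consequently, for $j \neq k-j$ the induced unweighted subgraph on $\{t = j\} \cup \{t = k-j\}$ is the complete bipartite graph with part sizes $\binom{j+n-1}{j}\binom{k-j+m-1}{k-j}$ and $\binom{k-j+n-1}{k-j}\binom{j+m-1}{j}$. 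For the self-paired value $j = k/2$, possible only when $k$ is even, the same construction applied with $\vec{i}$ and $\vec{j}$ both drawn from the class $\{t = k/2\}$ (allowing $\vec{i} = \vec{j}$) shows that every such vertex is adjacent to every other vertex of the class and to itself, giving the graph $\J_N$ with $N = \binom{k/2+n-1}{k/2}\binom{k/2+m-1}{k/2}$.

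Assembling these pairwise disjoint pieces recovers the claimed decomposition. Each piece is connected, so counting components reduces to counting the pieces: one for each $j \in \{0, 1, \ldots, \lfloor (k-1)/2 \rfloor\}$, plus one additional $\J$-piece when $k$ is even. Both parities yield $\lceil (k+1)/2 \rceil$. I expect the main technical obstacle to be the explicit construction of the representative permutations $\vec{p}, \vec{q}$ witnessing adjacency across paired classes (and in particular verifying the loops in the $j = k/2$ case); once this is in place the decomposition and the component count are routine bookkeeping.
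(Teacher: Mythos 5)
Your proposal is correct and follows essentially the same route as the paper: classify the vertices of $\K_{n,m}^{\odot k}$ by the number of coordinates lying in $[n]$, observe that adjacency forces complementary counts $j$ and $k-j$, and verify completeness of each resulting piece (with the self-paired class $j=k/2$ yielding the $\J$ component when $k$ is even). Your version merely makes explicit, via Lemma \ref{Lemma:MatrixRep} and the choice of orbit representatives $\vec{p},\vec{q}$, the adjacency claims that the paper states informally.
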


\begin{figure}
	                \centering
                      \begin{subfigure}[t]{0.45\textwidth}
                     \centering \begin{tikzpicture}[thick,scale=0.65, every node/.style={scale=0.75},auto, node distance=2.5,  thick,
                           node_style/.style={circle,draw=blue,fill=blue!10!,font=\sffamily\Large\bfseries},
                           edge_style/.style={draw=black,font=\small}]
                        
                            \node[node_style] (v1) at (-2,2) {111};
                            \node[node_style] (v2) at (-2,0) {112};
                            \node[node_style] (v3) at (-2,-2) {122};
                            \node[node_style] (v4) at (-2,-4) {222};
                        \node[node_style] (v5) at (0,-1) {333}; 
                        \node[node_style] (v6) at (2,2) {113};
                            \node[node_style] (v7) at (2,-1) {123};
                            \node[node_style] (v8) at (2,-4) {223};
                            \node[node_style] (v9) at (4,0) {133};
                        \node[node_style] (v10) at (4,-2) {233};
                        \draw[edge_style]  (v1) to (v5);
                            \draw[edge_style]  (v2) to (v5);
                            \draw[edge_style]  (v3) to (v5);
                            \draw[edge_style]  (v4) to (v5);
                            \draw[edge_style]  (v6) to (v9);
                      \draw[edge_style]  (v6) to (v10);
                      \draw[edge_style]  (v7) to (v9);
                      \draw[edge_style]  (v7) to (v10);
                      \draw[edge_style]  (v8) to (v9);
                      \draw[edge_style]  (v8) to (v10);
                            
                            \end{tikzpicture}
                            \caption{$\K_{2,1}^{\odot 3}$} 
\end{subfigure}
\quad
\begin{subfigure}[t]{0.45\textwidth}
                     \centering \begin{tikzpicture}[thick,scale=0.6, every node/.style={scale=0.5},auto, node distance=2.5,  thick,
                           node_style/.style={circle,draw=blue,fill=blue!10!,font=\sffamily\Large\bfseries},
                           edge_style/.style={draw=black,font=\small}]
                        
                            \node[node_style] (v1) at (-2,-1) {1111};
                            \node[node_style] (v2) at (-2,-2.5) {1112};
                            \node[node_style] (v3) at (-2,-4) {1122};
                            \node[node_style] (v4) at (-2,-5.5) {1222};
                    \node[node_style] (v11) at (-2,-7) {2222};
                        \node[node_style] (v5) at (0,-3) {3333}; 
                        \node[node_style] (v6) at (2,-1) {1113};
                            \node[node_style] (v7) at (2,-3) {1123};
                            \node[node_style] (v8) at (2,-5) {1223};
                            \node[node_style] (v12) at (2,-7) {2223};
                            \node[node_style] (v9) at (4,-2) {1333};
                            \node[node_style] (v13) at (6,-7) {1133};
                            \node[node_style] (v14) at (5,-5) {1233};
                            \node[node_style] (v15) at (7,-5) {2233};
                        \node[node_style] (v10) at (4,-3.5) {2333};
                        \draw[edge_style]  (v1) to (v5);
                            \draw[edge_style]  (v2) to (v5);
                            \draw[edge_style]  (v11) to (v5);
                            \draw[edge_style]  (v3) to (v5);
                            \draw[edge_style]  (v4) to (v5);
                            \draw[edge_style]  (v6) to (v9);
                      \draw[edge_style]  (v6) to (v10);
                      \draw[edge_style]  (v7) to (v9);
                      \draw[edge_style]  (v7) to (v10);
                      \draw[edge_style]  (v8) to (v9);
                      \draw[edge_style]  (v8) to (v10);
                       \draw[edge_style]  (v12) to (v9);
                      \draw[edge_style]  (v12) to (v10);
                      \draw[edge_style]  (v13) to (v14);
                        \draw[edge_style]  (v13) to (v15);
                        \draw[edge_style]  (v15) to (v14);
                           \draw (v13) to [out=0,in=0-90,looseness=3] (v13) ;
                           \draw (v14) to [out=15,in=0+90,looseness=3] (v14) ;
                           \draw (v15) to [out=15,in=0+90,looseness=3] (v15) ;
                           \end{tikzpicture}
                            \caption{$\K_{2,1}^{\odot 4}$} 
\end{subfigure}
                        \caption{The graphs $\K_{2,1}^{\odot 3}$ and $\K_{2,1}^{\odot 4}$ illustrate Theorem \ref{Theorem:Bite}.}              
\label{Figure:exKnm}
\end{figure}
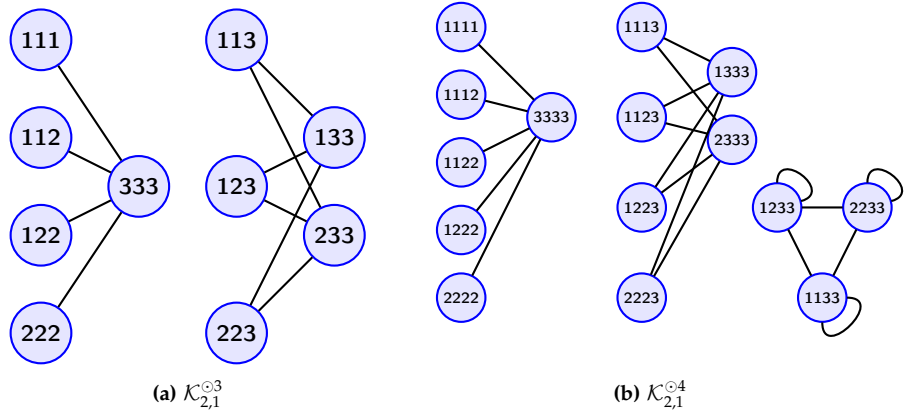

We illustrate the previous result as follows.
    Figure \ref{Figure:exKnm} presents the third and fourth symmetric power of $\K_{2,1}$;
    the second symmetric power is illustrated in Figure \ref{Figure:Compress}.
A complete bipartite graph is a \emph{star graph} if $n=1$; see Figure \ref{Figure:Star}.

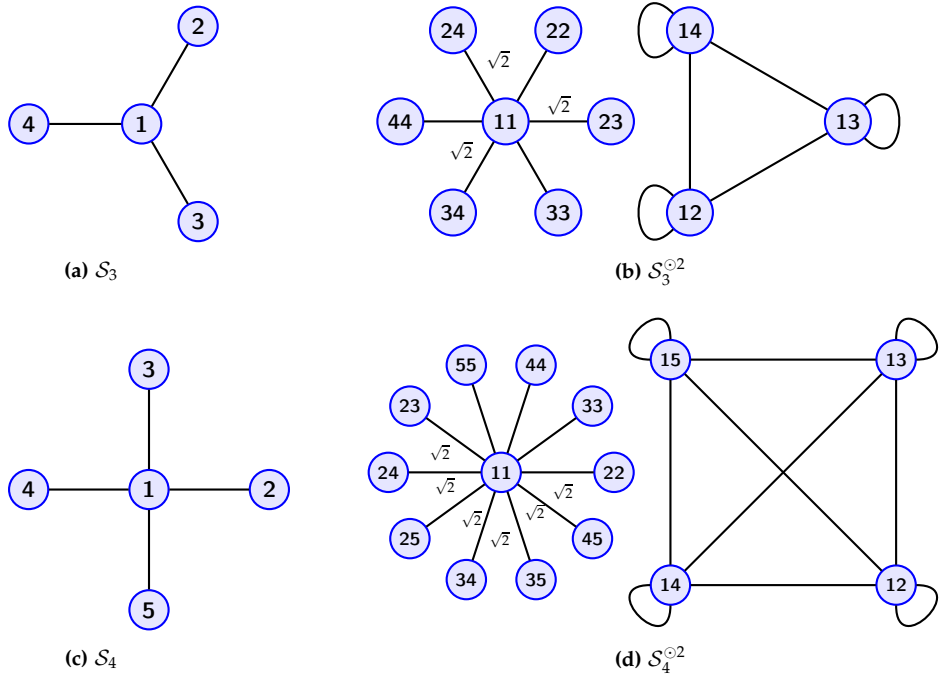
\begin{figure}  
\centering
\begin{subfigure}[t]{0.175\textwidth}	                
    \centering                        
        \begin{tikzpicture}[thick,scale=0.75, every node/.style={scale=0.75},thick,                           node_style/.style={circle,draw=blue,fill=blue!10!,font=\sffamily\Large\bfseries},                          edge_style/.style={draw=black,font=\small}]
        \node[node_style] (v0) at (0,0) {1} ;
        \node[node_style] (v1) at (1,1.73205) {2} ;
        \node[node_style] (v2) at (1,-1.73205) {3} ;
        \node[node_style] (v3) at (-2,0) {4} ;
        \draw[edge_style]  (v0)--(v1);
        \draw[edge_style]  (v0)--(v2);
        \draw[edge_style]  (v0)--(v3);
    \end{tikzpicture} 
    \caption{$\mathcal{S}_3$}                         
    \label{Figure:S3}	        
\end{subfigure}
\hfill
\begin{subfigure}[t]{0.65\textwidth}	                
\centering                        
\begin{tikzpicture}[thick,scale=0.7, every node/.style={scale=0.7},auto, node distance=2cm,  thick,                           node_style/.style={circle,draw=blue,fill=blue!10!,font=\sffamily\Large\bfseries},                          edge_style/.style={draw=black,font=\small}]
\clip (-2.5,-2.3) rectangle + (10.5,6);
\node[node_style] (v0) at (0,0) {11} ;
\node[node_style] (v1) at (1,1.73205) {22} ;
\node[node_style] (v2) at (1,-1.73205) {33} ;
\node[node_style] (v3) at (-2,0) {44} ;
\node[node_style] (v4) at (3.5,-1.73205) {12} ;
\node[node_style] (v5) at (6.5,0) {13} ;
\node[node_style] (v6) at (3.5,1.73205) {14} ;
\node[node_style] (v7) at (2,0) {23} ;
\node[node_style] (v8) at (-1,1.73205) {24} ;
\node[node_style] (v9) at (-1,-1.73205) {34} ;
\draw[edge_style]  (v0)--(v1);
\draw[edge_style]  (v0)--(v2);
\draw[edge_style]  (v0)--(v3);
\draw[edge_style]  (v0) edge node{$\sqrt{2}$} (v7);
\draw[edge_style]  (v8) edge node{$\sqrt{2}$} (v0);
\draw[edge_style]  (v9) edge node{$\sqrt{2}$} (v0);
\draw[edge_style]  (v4)--(v5);
\draw[edge_style]  (v4)--(v6);
\draw[edge_style]  (v5)--(v6);
\draw (v4) to [out=-135,in=135,looseness=5] (v4) ;
\draw (v5) to [out=45,in=-45,looseness=5] (v5) ;
\draw (v6) to [out=135,in=-135,looseness=5] (v6) ;
\end{tikzpicture} 
\caption{$\mathcal{S}_3^{\odot 2}$}
\label{Figure:S3x2}
\end{subfigure}
\\[10pt]
\begin{subfigure}[t]{0.175\textwidth}	                
\centering                        
\begin{tikzpicture}[thick,scale=0.8, every node/.style={scale=0.75},auto, node distance=2cm,  thick,                           node_style/.style={circle,draw=blue,fill=blue!10!,font=\sffamily\Large\bfseries},                          edge_style/.style={draw=black,font=\small}]
\node[node_style] (v0) at (0,0) {1} ;
\node[node_style] (v1) at (2,0) {2} ;
\node[node_style] (v2) at (0,2) {3} ;
\node[node_style] (v3) at (-2,0) {4} ;
\node[node_style] (v4) at (0,-2) {5} ;
\draw[edge_style]  (v0)--(v1);
\draw[edge_style]  (v0)--(v2);
\draw[edge_style]  (v0)--(v3);
\draw[edge_style]  (v0)--(v4);

\end{tikzpicture} 
\caption{$\mathcal{S}_4$}                         
\label{fig:CycleGraphTa}	        
\end{subfigure}
\hfill
\begin{subfigure}[t]{0.65\textwidth}	                
\centering                        
\begin{tikzpicture}[thick,scale=0.75, every node/.style={scale=0.6},auto, node distance=2cm,  thick,                           node_style/.style={circle,draw=blue,fill=blue!10!,font=\sffamily\Large\bfseries},                          edge_style/.style={draw=black,font=\small}]
\clip (-2.5,-2.8) rectangle + (10.3,5.6);
\node[node_style] (v0) at (0,0) {11} ;
\node[node_style] (v1) at (2,0) {22} ;
\node[node_style] (v2) at (1.61803, 1.17557) {33} ;
\node[node_style] (v3) at (0.618034, 1.90211) {44} ;
\node[node_style] (v4) at (-0.618034, 1.90211) {55} ;

\node[node_style] (v5) at (7,-2) {12} ;
\node[node_style] (v6) at (7,2) {13} ;
\node[node_style] (v7) at (3,-2) {14} ;
\node[node_style] (v8) at (3,2) {15} ;

\node[node_style] (v9) at (-1.61803, 1.17557) {23} ;
\node[node_style] (v10) at (-2,0) {24} ;
\node[node_style] (v11) at (-1.61803, -1.17557) {25} ;
\node[node_style] (v12) at (-0.618034, -1.90211) {34} ;
\node[node_style] (v13) at (0.618034, -1.90211) {35} ;
\node[node_style] (v14) at (1.61803, -1.17557) {45} ;
\draw[edge_style]  (v0)--(v1);
\draw[edge_style]  (v0)--(v2);
\draw[edge_style]  (v0)--(v3);
\draw[edge_style]  (v0)--(v4);
\draw[edge_style]  (v0) edge node{$ \sqrt{2} $} (v9);
\draw[edge_style]  (v0) edge node{$ \sqrt{2} $} (v10);
\draw[edge_style]  (v0) edge node{$ \sqrt{2} $} (v11);
\draw[edge_style]  (v0) edge node{$ \sqrt{2} $} (v12);
\draw[edge_style]  (v0) edge node{$ \sqrt{2} $} (v13);
\draw[edge_style]  (v0) edge node{$ \sqrt{2} $} (v14);
\draw[edge_style]  (v5)--(v6);
\draw[edge_style]  (v5)--(v7);
\draw[edge_style]  (v5)--(v8);
\draw[edge_style]  (v6)--(v7);
\draw[edge_style]  (v6)--(v8);
\draw[edge_style]  (v7)--(v8);
\draw (v5) to [out=0,in=-90,looseness=5] (v5) ;
\draw (v6) to [out=0,in=90,looseness=5] (v6) ;
\draw (v7) to [out=-90,in=180,looseness=5] (v7) ;
\draw (v8) to [out=90,in=180,looseness=5] (v8) ;
\end{tikzpicture} 
\caption{$\mathcal{S}_4^{\odot 2}$}                         
\label{Figure:S4-2}	        
\end{subfigure}
    \captionsetup{width=.95\linewidth}	        
\caption{Star graphs are bipartite graphs.  The images above illustrate Theorem \ref{Theorem:Bite}.}    
\label{Figure:Star}
\end{figure}

%%%%%%%%%%%%%%%%%%%%%%%%%%%%%%%%%%%%%%%%%
\section{Wiener index}\label{Section:Wiener}
Let $\G$ be an undirected graph with $V_{\G} = \{v_1,v_2,\ldots,v_n\}$. The \emph{Wiener index} 
\begin{equation*}
W(\G)=\sum_{\substack{\{v_i,v_j\}\subseteq V_{\G}\\ v_i\neq v_j}}d_{\G}(v_i,v_j)
\end{equation*}
measures the complexity of $\G$.  Here 
$d_{\G}(v_i,v_j)$ is the minimum distance between $v_i$ and $v_j$ in $\G$.  In this section we compute 
the Wiener index of the symmetric tensor product for several types of graphs.

\begin{proposition}
\begin{enumerate}[leftmargin=*]
\item $W (\J_n^{\odot k} )= \displaystyle \binom{\binom{k+n-1}{k}}{2}$.

\item $W (\K_n^{\odot k} )=\displaystyle\binom{\binom{k+n-1}{k}}{2}+\frac{n}{2}\Bigg (\binom{k+2(n-2)+1}{2(n-1)}-\binom{k-\left \lceil \frac{k}{2} \right \rceil-\left (\frac{1 + (-1)^k}{2}\right )+n-1}{n-1}\Bigg )$.
\end{enumerate}
\end{proposition}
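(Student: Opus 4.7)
Part (a) is immediate from the earlier proposition showing that, as unweighted graphs, $\J_n^{\odot k}$ coincides with $\J_N$ on $N := \binom{n+k-1}{k}$ vertices. Every pair of distinct vertices is then adjacent, so each of the $\binom{N}{2}$ unordered pairs contributes $1$ to the Wiener sum.

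For part (b), the plan is to first derive a clean adjacency criterion in $\K_n^{\odot k}$. Specializing Lemma~\ref{Lemma:MatrixRep} to the adjacency matrix of $\K_n$, the entry at $(\vec{i},\vec{j})$ is nonzero iff there exist $\vec{p}\in\Orb(\vec{i})$ and $\vec{q}\in\Orb(\vec{j})$ with $p_\ell \neq q_\ell$ for every $\ell$. Reading this as a perfect matching in the bipartite graph on $[k]\sqcup[k]$ whose non-edges join positions sharing a label, Hall's theorem collapses the condition to $\vec{m}(\vec{i})_c + \vec{m}(\vec{j})_c \leq k$ for every $c \in [n]$. Moreover, at most one $c$ can violate this, since two simultaneous violators would contradict $\sum_c(\vec{m}(\vec{i})_c + \vec{m}(\vec{j})_c) = 2k$.

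Next, I would show that for $n \geq 3$ every non-adjacent pair $\vec{i}, \vec{j}$ lies at distance exactly $2$: choose $\vec{l}$ with $\vec{m}(\vec{l})_c = 0$ at the offending label $c$ and distribute the remaining $k$ coordinates among the other colors subject to $\vec{m}(\vec{l})_{c'} \leq k - \max\{\vec{m}(\vec{i})_{c'},\vec{m}(\vec{j})_{c'}\}$. This is feasible because
\begin{equation*}
\sum_{c' \neq c}\bigl(k - \max\{\vec{m}(\vec{i})_{c'},\vec{m}(\vec{j})_{c'}\}\bigr) \geq (n-3)k + \vec{m}(\vec{i})_c + \vec{m}(\vec{j})_c > (n-2)k \geq k,
\end{equation*}
and any such $\vec{l}$ is simultaneously adjacent to $\vec{i}$ and to $\vec{j}$ by the criterion just established.

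The counting step then follows by the vertex-transitivity of $\K_n$ and the disjointness of the violator sets. Writing $|X_1|$ for the number of ordered pairs with $\vec{m}(\vec{i})_1 + \vec{m}(\vec{j})_1 > k$ and $D_1$ for the number of nondecreasing $\vec{i}$ with $\vec{m}(\vec{i})_1 > k/2$, the total contribution from distance-two pairs is $\tfrac{n}{2}(|X_1| - D_1)$. Parametrizing by the multiplicities of label $1$,
\begin{equation*}
|X_1| = \sum_{s+t>k}\binom{k-s+n-2}{n-2}\binom{k-t+n-2}{n-2},
\end{equation*}
and the Vandermonde convolution $\sum_{u+v=m}\binom{u+n-2}{n-2}\binom{v+n-2}{n-2} = \binom{m+2n-3}{2n-3}$ followed by the hockey-stick identity collapses this to $\binom{k+2n-3}{2n-2}$. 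A direct application of hockey-stick to the single sum $D_1 = \sum_{s > k/2}\binom{k-s+n-2}{n-2}$ yields $\binom{k - \lceil k/2\rceil - (1+(-1)^k)/2 + n - 1}{n-1}$, uniformly in the parity of $k$. Adding these contributions to the base $\binom{N}{2}$ gives the claimed formula. The main obstacle is the diameter bound: the binomial identities are routine once the parametrization is in place, but the construction of the intermediate vertex relies on the strict slack inequality, which breaks down at $n = 2$, where $\K_n^{\odot k}$ is genuinely disconnected.
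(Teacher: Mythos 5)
Your proposal is correct and follows essentially the same route as the paper's: the same decomposition $W=\binom{N}{2}+\tfrac{n}{2}(T_k-F_k)$ over non-adjacent pairs indexed by the unique violating label, the same parametrization of $T_k$ and $F_k$ by the multiplicity of that label in the two compositions, and the same binomial evaluations. The only difference is that you rigorously justify (via Hall's theorem and the explicit slack computation producing a common neighbor) the two steps the paper merely asserts, namely that non-adjacency is equivalent to $\vec{m}(\vec{i})_c+\vec{m}(\vec{j})_c>k$ for some $c$, and that every non-adjacent pair is at distance exactly $2$ when $n\geq 3$.
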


\begin{proof}
%\color{blue}\comment{Can you please adjust the notation here.  The use of $\vec{x}, \vec{y}$ conflicts with our previous notation for vertices in symmetric tensor products.  Also the $A$ notation conflicts with adjacency matrices; the strange subscripts are also a bit odd looking. {\color{red}{done.}}}%
It is clear that the graph $\J_n^{\odot k}$ has every possible edge and so, every vertex is at distance one of every other vertex, implying that 
the sum is given by choosing the two vertices. For the complete graph, every two vertices are at distance no more than two.
Call $D_{k}$ the number of pairs of vertices, without order, that are not connected by an edge. This happens only if there is one element 
appearing more times than the rest of the elements. In this situation, using the pigeonhole principle, we will have to assign one of the copies of this vertex to itself, but the complete graph contains no loops, hence it is impossible to go by an edge. Let $\{\vec{i},\vec{j}\}$ be one of such pairs, then there is a $r \in [n]$ such that $\vec{i}_s>\sum_{r\neq s}\vec{j}_r=k-\vec{j}_s$, and so the condition becomes $\vec{i}_s+\vec{j}_s>k$. Notice also that this can only happen in one index $s$, and without loss of generality, we can say that it happens in the first component of the compositions. An expression for the Wiener index of the complete graph becomes
\begin{eqnarray}
W\left (\K_n^{\odot k}\right )&=1\cdot \left (\displaystyle {\binom{k+n-1}{k}\choose 2}-n\cdot D_{k}\right )+2\cdot n\cdot D_{k}\\
&=\displaystyle {\binom{k+n-1}{k}\choose 2}+2\cdot n\cdot D_{k},
\end{eqnarray}
   \noindent 
but $T_{k}=2\cdot D_{k}+F_{k},$
where $T_k=\left |\{(\vec{i},\vec{j}):\vec{i}_1+\vec{j}_1>k\}\right |$ denotes the number of pairs of compositions of $k$ of size $n$ for which the sum of their first parts exceeds $k$ and $F_{k}=\left |\{\vec{i}:2\vec{i}_1>k\}\right |$ corresponds to those pairs for which the two compositions are the same. 
Using stars and bars one gets $F_{k}=\displaystyle \binom{k-\left \lceil \frac{k}{2} \right \rceil-(\frac{1 + (-1)^k}{2})+n-1}{n-1}$, where $[2|k]$ is $1$ if $k$ is even and $0$ otherwise.\\
On the other hand, 
\begin{align*}
    T_{k}&=\sum_{s=1}^k\sum_{t=k+1-s}^k\binom{k-s+n-2}{n-2}\binom{k-t+n-2}{n-2}\\
    &=\sum_{s=1}^k\sum_{t=0}^{s-1}\binom{k-s+n-2}{n-2}\binom{t+n-2}{n-2}\\
    &=\sum_{s=1}^k\binom{k-s+n-2}{n-2}\binom{s+n-2}{n-1}=\binom{k+2(n-2)+1}{2(n-2)}.
\end{align*}
\noindent
The proof is complete. 
\end{proof}

%\comment{Is the red reference correct? \color{red}{Yes, as an odd cycle is connected and non-bipartite.}}%
Recall that $\C_n$ is the cycle graph on $N$ vertices.
%{\color{red}
Corollary \ref{Corollary:Conn} shows that if $n$ is odd, then $\C_n^{\odot 2}$ is connected. 
We compute its Wiener index. 

\begin{proposition}
Let $n\geq 3$ be odd, then $\C_n^{\odot 2}$ is connected and
\begin{equation*}
W(\C_n^{\odot 2})=
\binom{n+2}{2}W(\C_n )+\left (n\binom{n}{2}-2W\left (C_n\right )\right )\binom{\frac{n+3}{2}}{2}-2n^2\binom{\frac{n+3}{2}}{3}.
\end{equation*}
\end{proposition}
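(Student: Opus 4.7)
My approach is first to derive an explicit distance formula for $\C_n^{\odot 2}$ in terms of distances in $\C_n$, then to aggregate by vertex type and parity.

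Any geodesic in $\C_n^{\odot 2}$ lifts to a shortest simultaneous walk in $\C_n^{\otimes 2}$ modulo $\S_2$. Because $n$ is odd, $\C_n$ is non-bipartite, so a walk of length $s$ from $a$ to $b$ in $\C_n$ exists precisely when $s \geq d(a,b)$ with $s \equiv d(a,b) \pmod 2$, or $s \geq n - d(a,b)$ with $s \equiv n - d(a,b) \pmod 2$. Intersecting these conditions coordinate-wise and minimizing yields
\begin{equation*}
d_{\C_n^{\otimes 2}}((i,j),(k,l)) = \begin{cases} \max(d_1, d_2), & d_1 \equiv d_2 \pmod 2, \\ n - \max(d_1, d_2), & \text{otherwise,} \end{cases}
\end{equation*}
where $d_1 = d(i,k)$ and $d_2 = d(j,l)$; then $d_{\C_n^{\odot 2}}(\{i,j\},\{k,l\})$ is the smaller of this quantity and its analog for the swapped pairing $(i,j) \leftrightarrow (l,k)$.

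Next, I would partition the Wiener sum of $\C_n^{\odot 2}$ according to whether the source and target are diagonal ($\{i,i\}$) or off-diagonal. The diagonal-to-diagonal contribution is exactly $W(\C_n)$, since $d_1 = d_2$ forces parities to agree and the distance collapses to $d(i,k)$. For all remaining pairs, I would reparametrize by distance profiles $(a,b) \in \{0,1,\ldots,m\}^2$ with $m = (n-1)/2$, weighted by translation-invariant multiplicities on $\C_n$, and separate parity-agree from parity-differ summands. Summing the parity-agree block should collapse to $\binom{n+2}{2}W(\C_n)$ (after repeated use of $W(\C_n) = n\sum_{d=1}^{m} d$), while the parity-differ block decomposes into an ``$n$''-contribution equal to $\bigl(n\binom{n}{2}-2W(\C_n)\bigr)\binom{(n+3)/2}{2}$ (the bracketed factor counting ordered pairs whose $\C_n$-distances have opposite parity) and a ``$-\max$''-contribution equal to $-2n^2\binom{(n+3)/2}{3}$, via standard binomial identities such as $\sum_{a=0}^{m} a = \binom{m+1}{2}$ and $\sum_{a=0}^{m} \binom{a+1}{2} = \binom{m+2}{3}$.

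The main obstacle is careful combinatorial bookkeeping: distinguishing ordered versus unordered vertex pairs, managing the $\S_2$-min that can favor different orbit pairings for different vertex pairs, and verifying that the parity-differ sum telescopes precisely into the stated binomial coefficients. A sanity check at $n = 3$ gives $\binom{5}{2}\cdot 3 + 3\cdot 3 - 18 = 21$, matching direct enumeration on $\C_3^{\odot 2} = \K_3^{\odot 2}$ (Figure~\ref{Figure:Complete}b), which confirms that the approach yields the correct closed form before carrying out the full algebraic manipulation.
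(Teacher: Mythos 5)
Your plan is sound and takes a genuinely different route from the paper. You work from an explicit distance formula: using the walk-parity characterization in the (non-bipartite, since $n$ is odd) cycle, you get $d_{\C_n^{\otimes 2}}=\max(d_1,d_2)$ when the two cycle-distances agree in parity and $n-\max(d_1,d_2)$ otherwise, and then take the minimum over the two $\S_2$-pairings to descend to $\C_n^{\odot 2}$; the Wiener index is then a parity-stratified double sum over distance profiles. The paper instead argues structurally: it partitions $V_{\C_n^{\odot 2}}$ into $\frac{n+1}{2}$ blocks $V_\ell=\{v_i\odot v_{i+\ell}\}$, each inducing a copy of $\C_n$, observes that consecutive blocks are joined by edges $v_i\odot v_j \to v_{i\pm 1}\odot v_{j\mp 1}$, and reduces the Wiener index to a single sum over the inter-block distance $d$, namely $\sum_{i}\bigl(2W(\C_n)+i\,n^2\bigr)\bigl(\tfrac{n+1}{2}-i\bigr)-\tfrac{n+1}{2}W(\C_n)$. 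Your approach buys a self-contained closed-form distance function (useful beyond the Wiener index, and it gives connectivity for free since every distance is finite), at the cost of a heavier two-parameter summation with a $\min$ over pairings interacting with the parity cases; the paper's block decomposition collapses the computation to one index but leans on the geometric observation about the $V_\ell$. The one thing to flag is that your write-up defers exactly the part where errors would hide — the aggregation of the parity-differ block into $\bigl(n\binom{n}{2}-2W(\C_n)\bigr)\binom{(n+3)/2}{2}-2n^2\binom{(n+3)/2}{3}$, including the cases where both pairings have mismatched parity — so the argument is not complete until that bookkeeping is actually carried out; the $n=3$ check is encouraging but does not substitute for it.
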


\begin{proof}
The set of vertices of $\C_n^{\odot 2}$ can be identified with $\{ v_i \odot v_j: i \leq j\}$.
Partition this set into $\frac{n+1}{2}$ blocks of $n$ vertices such that the graph induced by each of the parts is isomorphic to $\C_n$. 
Consider the partition of the vertices of $\C_n^{\odot 2}$ given by 
\begin{equation*}
V_{\C_n^{\odot 2}}=\bigcup _{\ell = 0}^{(n-1)/2} V_{\ell},
\end{equation*}
in which $V_{\ell}=\{v_{i} \odot v_{j}:j=i+\ell\}$. Each block of this partition is connected to other two 
(except the blocks $v_i \odot v_i$ and $v_i \odot v_{i+1}$) by the edges $v_{i} \odot v_{j}$ to $v_{i+1} \odot v_{j-1}$ and  
$v_{i-1} \odot v_{j+1}$ as seen in Figure \ref{fig:CycleGraphb}. Thus, $\C_n^{\odot 2}$ is connected and its 
Wiener index depends on which of the $\frac{n+1}{2}$ blocks we are taking the vertices. If $d$ represents the distance from one block to the other,
%\comment{Please reword the blue part.  It is confusing. {\color{red}done.}} 
then 
the distance in between the elements of each block is $d$. There are $n^2$ choices of vertices  and  $\frac{n+1}{2}-d$ choices for the two blocks with distance $d$, so 
\begin{equation*}
    W(\C_n^{\odot 2})
    =\sum_{i=0}^{\frac{n+1}{2}}\left (2W (\C_n)+i\cdot n^2\right )\left (\frac{n+1}{2}-i\right )-\frac{n+1}{2}W (\C_n ).
\end{equation*}
The result follows by  expanding the product and using binomial identities.
\end{proof}

%%%%%%%%%%%%%%%%%%%%%%%%%%%%%%%%%%%
\section{Open questions}\label{Section:Open}

Although one can consider symmetric tensor products of distinct graphs, we have restricted our attention to symmetric powers
to avoid the necessary notational hurdles.  Obviously, the investigation of symmetric tensor products of distinct graphs is wide open territory
that warrants exploration.

%\comment{We need some more questions.  Please add whatever you can think of.}%

\begin{problem}
Develop analogous results and observations for symmetric tensor products of distinct graphs.
\end{problem}

\begin{problem}\label{Problem:Nesting}
Figure \ref{Figure:Scepter} suggests that $\G^{\odot k}$ is always contained in $\G^{\odot(k+1)}$
in the sense of weighted graphs (edge weights can increase as one moves to a higher power).  
It looks like one can prepend $1$ to each vertex label to pass to the next power.  But this fails in 
Figure \ref{Figure:Barbell}, even though nesting occurs.  What can be said about nesting in
successive symmetric tensor powers?
\end{problem}

\begin{problem}
If $k \leq r$, is $\G^{\odot k}$ a subgraph of $\G^{\odot r}$ in the sense of weighted graphs.
That is, are the weights in $\G^{\odot k}$ at most the corresponding weights in $\G^{\odot r}$?
\end{problem}

Both problems have a positive answer if one of the vertices has a loop. Just call that vertex $1$ and prepend it to the sequence. 
If not, as in the example of $\P_3$, one can think of picking an edge and alternating it.

\begin{problem}
In Appendix \ref{Section:WellDefined} we show how the symmetric tensor power of a permutation matrix is a permutation matrix.
How are the cycle decompositions of a permutation and its symmetric tensor powers related?
\end{problem}

%\bibliography{/Users/vmh/Dropbox/AllRef/official1.bib}
%\bibliography{SymmetricTensorPowersGraphs}

\begin{thebibliography}{1}

\bibitem{hammack-2011a}
R.~Hammack, {W}. {I}mrich, and S.~{K}lav\v{z}ar.
\newblock {\em Handbook of product graphs}.
\newblock {CRC}~{P}ress, 2011.

\bibitem{henderson-1983a}
H.~V. Henderson, F.~Pukelsheim, and S.~R. Searle.
\newblock On the history of the {K}ronecker product.
\newblock {\em Linear and {M}ultinear {A}lgebra}, 14:113--120, 1983.

\bibitem{moradi-2012a}
S.~Moradi.
\newblock A note on tensor products of graphs.
\newblock {\em Iran.~{J}.~{M}ath.~{S}ci.~{I}nform.}, 7:73--81, 2012.

\bibitem{weichsel-1962a}
P.~M. Weichsel.
\newblock The {K}ronecker product of graphs.
\newblock {\em Proc.~{A}mer.~{M}ath.~{S}oc.}, 13:47--52, 1962.

\end{thebibliography}
%\bibliographystyle{plain}
%\end{document}

%%%%%%%%%%%%%%%%%%%%%%%%%%%
\appendix
\section{The Symmetric tensor power of a graph is well-defined}\label{Section:WellDefined}

We now show that isomorphic graphs, that is, graphs whose adjacency matrices are permutation
similar, have isomorphic symmetric tensor powers.

\begin{theorem}\label{Theorem:WellDefined}
The symmetric tensor power operation on graphs is well defined; that is, isomorphic graphs have isomorphic symmetric powers.
\end{theorem}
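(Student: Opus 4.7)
The plan is to show that the assignment $A \mapsto A^{\odot k}$ is functorial under conjugation by permutation matrices. Concretely, if $\G$ and $\H$ are isomorphic, then there is a permutation matrix $P$ with $B = P^{\T} A P$, where $A$ and $B$ are the adjacency matrices of $\G$ and $\H$. I will exhibit a permutation matrix $Q$ on $\V^{\odot k}$ (with respect to the orthonormal basis \eqref{eq:ONB}) such that $B^{\odot k} = Q^{\T} A^{\odot k} Q$, which gives the desired isomorphism of symmetric tensor powers.

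The first step is a clean multiplicativity lemma: for any linear operators $A, B$ on $\V$,
\begin{equation*}
(AB)^{\odot k} = A^{\odot k} B^{\odot k}.
\end{equation*}
To see this, extend the symmetric product to arbitrary vectors by $w_1 \odot \cdots \odot w_k := \frac{1}{k!} \sum_{\sigma \in \S_k} w_{\sigma(1)} \otimes \cdots \otimes w_{\sigma(k)}$. Then \eqref{eq:SymADefi} reads $A^{\odot k}(v_{i_1} \odot \cdots \odot v_{i_k}) = (Av_{i_1}) \odot \cdots \odot (Av_{i_k})$, and the same identity holds if the inputs are arbitrary vectors (by multilinearity of $\odot$). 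Applying this twice gives $(AB)^{\odot k}(v_{i_1}\odot\cdots\odot v_{i_k}) = AB v_{i_1}\odot\cdots\odot AB v_{i_k} = A^{\odot k}(Bv_{i_1}\odot\cdots\odot Bv_{i_k}) = A^{\odot k}B^{\odot k}(v_{i_1}\odot\cdots\odot v_{i_k})$, and linear extension finishes the step.

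The second step is to show that if $P$ is the permutation matrix associated with $\pi \in \S_n$, then $P^{\odot k}$ is a permutation matrix with respect to the basis $\{u_{\vec{i}}\}$ of Lemma \ref{Lemma:ONB}. Given a nondecreasing $\vec{i} = (i_1,\ldots,i_k) \in [n]^k$, let $\vec{i}\,'$ denote the nondecreasing rearrangement of $(\pi(i_1),\ldots,\pi(i_k))$. The crucial observation is that $\vec{m}(\vec{i}\,')$ is simply a permutation of the entries of $\vec{m}(\vec{i})$ (namely, $\vec{m}(\vec{i}\,')_{\pi(\ell)} = \vec{m}(\vec{i})_\ell$), so the multinomial coefficient $\binom{k}{\vec{m}(\vec{i})} = \binom{k}{\vec{m}(\vec{i}\,')}$ is invariant. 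Hence
\begin{equation*}
P^{\odot k} u_{\vec{i}} = \binom{k}{\vec{m}(\vec{i})}^{1/2}\!\!\!\! v_{\pi(i_1)}\odot\cdots\odot v_{\pi(i_k)} = \binom{k}{\vec{m}(\vec{i}\,')}^{1/2}\!\!\!\! v_{i'_1}\odot\cdots\odot v_{i'_k} = u_{\vec{i}\,'},
\end{equation*}
so $P^{\odot k}$ merely permutes the basis $\{u_{\vec{i}}\}$ and is therefore a permutation matrix. In particular it is orthogonal, and combined with multiplicativity and $I^{\odot k} = I$ we get $(P^{\T})^{\odot k} = (P^{\odot k})^{-1} = (P^{\odot k})^{\T}$.

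Finally, assembling the pieces: setting $Q := P^{\odot k}$,
\begin{equation*}
B^{\odot k} = (P^{\T} A P)^{\odot k} = (P^{\T})^{\odot k} A^{\odot k} P^{\odot k} = Q^{\T} A^{\odot k} Q,
\end{equation*}
so $A^{\odot k}$ and $B^{\odot k}$ are permutation similar, meaning $\G^{\odot k} \cong \H^{\odot k}$ as weighted graphs. The main conceptual subtlety is the bookkeeping in the second step: one must check that the normalization factors in \eqref{eq:ONB} are preserved under the action of $\pi$ on indices, which is exactly what the invariance of the multinomial coefficient under coordinate permutations guarantees. Everything else is formal.
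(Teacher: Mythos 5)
Your proposal is correct, and it reaches the conclusion by a genuinely different route from the paper. The paper works entirely entry-wise through Lemma \ref{Lemma:MatrixRep}: it first shows (Lemma \ref{Lemma:Permutation}) that $\Gamma_\sigma^{\odot k}$ is a permutation matrix by counting the nonzero contributions in the orbit sum, and then verifies directly that $[(\Gamma A\Gamma^{\top})^{\odot k}]_{\vec{i},\vec{j}} = [A^{\odot k}]_{\sigma(\vec{i}),\sigma(\vec{j})} = [\Gamma^{\odot k}A^{\odot k}(\Gamma^{\top})^{\odot k}]_{\vec{i},\vec{j}}$. You instead prove the coordinate-free multiplicativity $(AB)^{\odot k}=A^{\odot k}B^{\odot k}$ --- a more general statement that the paper never isolates, and which makes compatibility with conjugation automatic --- and then check that $P^{\odot k}$ permutes the orthonormal basis of \eqref{eq:ONB} by a direct computation whose only nontrivial ingredient is the invariance of $\binom{k}{\vec{m}(\vec{i})}$ under relabeling of the index set. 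Your approach buys a cleaner, reusable lemma and avoids the double orbit sum entirely; the small price is that you must justify extending $\odot$ and the identity $A^{\odot k}(w_1\odot\cdots\odot w_k)=(Aw_1)\odot\cdots\odot(Aw_k)$ to arbitrary vectors, which you correctly do by multilinearity (this is legitimate because $A^{\odot k}$ is already well defined on the basis of nondecreasing symmetric tensors, so you are deriving an identity, not making a definition). The two arguments also differ superficially in the permutation-matrix convention ($Pv_i=v_{\pi(i)}$ versus the paper's $[\Gamma_\sigma]_{i,j}=1$ iff $j=\sigma(i)$), but this does not affect correctness since either convention realizes the relabeling up to replacing $\pi$ by $\pi^{-1}$.
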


\begin{lemma}\label{Lemma:Permutation}
Let $\Gamma _{\sigma} \in \M_n$ represent the permutation $\sigma \in \mathfrak{S}_n$, that is,
\begin{equation*}
    [\Gamma_{\sigma} ] _{i,j}
    =
    \begin{cases}
    1 & \text{if $j=\sigma (i)$},\\
    0 & \text{if otherwise},
    \end{cases}
\end{equation*}
and let $N = \binom{n+k-1}{k}$.
Then $\Gamma_{\sigma}^{\odot k} \in \M_N$ is the permutation matrix associated to 
$\sigma ^{\odot k}\in \mathfrak{S}_N$, in which $\sigma^{\odot k}(\vec{x})$ is the only nondecreasing element of 
$[n]^k$ in $\Orb (\sigma (\vec{x}))$. 
\end{lemma}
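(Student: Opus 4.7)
The plan is to compute $[\Gamma_\sigma^{\odot k}]_{\vec{i},\vec{j}}$ directly from Proposition \ref{Lemma:MatrixRep} and read off that only one column of each row can carry a nonzero entry, producing exactly the permutation $\sigma^{\odot k}$. The key observation is that, because $[\Gamma_\sigma]_{p,q}\in\{0,1\}$ with $[\Gamma_\sigma]_{p,q}=1$ iff $q=\sigma(p)$, the product $\prod_{\ell=1}^k [\Gamma_\sigma]_{p_\ell,q_\ell}$ equals $1$ precisely when $\vec{q}=\sigma(\vec{p})$ componentwise, and vanishes otherwise. So the sum in Proposition \ref{Lemma:MatrixRep} reduces to counting pairs $(\vec{p},\vec{q})\in\Orb(\vec{i})\times\Orb(\vec{j})$ with $\vec{q}=\sigma(\vec{p})$.

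Next I would determine when this count is nonzero. Componentwise application of $\sigma$ commutes with the $\S_k$ action on $[n]^k$, hence carries $\Orb(\vec{i})$ bijectively onto $\Orb(\sigma(\vec{i}))$. Thus contributing pairs exist iff $\Orb(\vec{j})=\Orb(\sigma(\vec{i}))$, equivalently iff $\vec{j}$ is the unique nondecreasing element of $\Orb(\sigma(\vec{i}))$, which is exactly $\sigma^{\odot k}(\vec{i})$. When this holds, the number of contributing pairs equals $|\Orb(\vec{i})|=\binom{k}{\vec{m}(\vec{i})}$.

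To finish I would check that the normalization cancels this multinomial coefficient. Since $\vec{m}(\sigma(\vec{i}))$ is obtained from $\vec{m}(\vec{i})$ by permuting its entries through $\sigma$, and $\binom{k}{\cdot}$ is invariant under permutation of its lower arguments, we get $\binom{k}{\vec{m}(\vec{j})}=\binom{k}{\vec{m}(\vec{i})}$. Consequently
\begin{equation*}
[\Gamma_\sigma^{\odot k}]_{\vec{i},\vec{j}}=\frac{\binom{k}{\vec{m}(\vec{i})}}{\sqrt{\binom{k}{\vec{m}(\vec{i})}\binom{k}{\vec{m}(\vec{j})}}}=1
\end{equation*}
when $\vec{j}=\sigma^{\odot k}(\vec{i})$, and $0$ otherwise. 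Because $\sigma^{\odot k}$ is visibly a bijection on the set of nondecreasing elements of $[n]^k$ (with inverse $(\sigma^{-1})^{\odot k}$), this proves $\Gamma_\sigma^{\odot k}$ is the permutation matrix of $\sigma^{\odot k}$.

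There is no real obstacle here once Proposition \ref{Lemma:MatrixRep} is invoked; the only subtlety is the bookkeeping that identifies $\vec{m}(\sigma(\vec{i}))$ with a permutation of $\vec{m}(\vec{i})$ so the normalization factors collapse cleanly. The argument also gives, as a byproduct, that $\Gamma_\sigma\mapsto\Gamma_\sigma^{\odot k}$ is a group homomorphism $\mathfrak{S}_n\to\mathfrak{S}_N$.
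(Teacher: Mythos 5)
Your proposal is correct and follows essentially the same route as the paper's proof: apply Lemma \ref{Lemma:MatrixRep}, observe that the product of matrix entries is nonzero only when $\vec{q}=\sigma(\vec{p})$ componentwise, count the $\binom{k}{\vec{m}(\vec{i})}$ contributing pairs, and cancel against the normalization using $\binom{k}{\vec{m}(\sigma(\vec{i}))}=\binom{k}{\vec{m}(\vec{i})}$. Your write-up is in fact somewhat more explicit than the paper's (in particular about why $\sigma$ maps $\Orb(\vec{i})$ onto $\Orb(\sigma(\vec{i}))$ and why exactly one column per row is nonzero), but the underlying argument is identical.
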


\begin{proof}
Proposition \ref{Lemma:MatrixRep} gives
\begin{equation*}
\big[\Gamma _{\sigma} ^{\odot k} \big]_{\vec{x},\vec{y}}=\frac{1}{\sqrt{\binom{k}{\vec{m}(\vec{x})}\binom{k}{\vec{m}(\vec{y})}}}
\sum_{\substack{\vec{t}\in \Orb(\vec{x})\\\vec{q}\in \Orb(\vec{y})}}\prod _{\ell = 1}^k\left [\Gamma_{\sigma}\right ] _{\vec{t}_{\ell},\vec{q}_{\ell}}.
\end{equation*}
If $\vec{z}\in \Orb(\sigma (\vec{x}))$ and $\vec{y} \neq \vec{z}$,  then $\left [\Gamma _{\sigma}\right ] _{\vec{t}_{\ell},\vec{q}_{\ell}}=0.$ 
This  happens  exactly $| \Orb(\vec{x})|=\binom{k}{\vec{m}(\vec{x})}$ times. 
Since $\binom{k}{\vec{m}(\vec{x})}=\binom{k}{\vec{m}(\sigma (\vec{x}))}$, 
it follows that $[\Gamma_{\sigma} ^{\odot k}  ]_{\vec{x},\sigma (\vec{x})}=1$. 
Thus, $\Gamma_{\sigma} ^{\odot k}$ is the permutation matrix corresponding to $\sigma ^{\odot k}$ in the group of permutations of the
$N$ nondecreasing elements of $[n]^k$ (we identify this group with $\mathfrak{S}_N$).
\end{proof}

\begin{example}
    Let $n=k=2$ and let $P = [\begin{smallmatrix} 0&1\\1&0 \end{smallmatrix}]$.  
    Then \eqref{eq:Ank22} and \eqref{eq:Ank23} ensure that
    \begin{equation*}
        P^{\odot 2}
        = \Big[
        \begin{smallmatrix}
        \0 & 1 &  \0 \\
        1 & \0 &  \0 \\
        \0 & \0 & 1
        \end{smallmatrix}
        \Big]
        \quad\text{and}\quad
        P^{\odot 3}
        = \bigg[
    \begin{smallmatrix}
    \0 & 1 & \0 & \0 \\
    1 & \0 & \0 & \0\\
    \0 & \0 & \0 & 1 \\
    \0 & \0 & 1 & \0
    \end{smallmatrix}
    \bigg]
    .    
    \end{equation*}
    %\color{blue}\comment{Is it possible to reword this without the use of colors?  Actually, these two examples confused me a bit so maybe see what can be done to clarify.\color{red}{done.}}%
    Let $\vec{h}=(1,1),\vec{i}=(2,2)$ and $\vec{j}=(1,2)$ be the three posible indices when $n=k=2$. Consider $\sigma = 21$ a permutation on two elements, then $\sigma ^{\odot 2}(\vec{h})=\vec{i},$ $\sigma ^{\odot 2}(\vec{i})=\vec{h}$ and $\sigma ^{\odot 2}(\vec{j})=\vec{j}$. 
    Order the basis elements as $(1,1),(2,2),(1,2)$ in the nondecreasing elements of $[2]^2$. 
\end{example}

\begin{example}
    Let $n=3$, $k=2$, and let $P = \Big[ \begin{smallmatrix} \0 &1 &\0 \\ \0 & \0 & 1 \\ 1 &\0 & \0 \end{smallmatrix} \Big]$.  
    Then \eqref{eq:Ank32} ensures that
    \begin{equation*}
        P^{\odot 2}
        = \left[
    \begin{smallmatrix}
        \0 & 1 & \0 & \0 & \0 & \0 \\
        \0 & \0 & 1 & \0 & \0 & \0 \\
        1 & \0 & \0 & \0 & \0 & \0 \\
        \0 & \0 & \0 & \0 &  \0 & 1 \\
        \0 & \0 & \0 & 1 & \0 & \0 \\
        \0 & \0 & \0 & \0 & 1 & \0
    \end{smallmatrix}
    \right].
    \end{equation*}
 By imposing a linear order on the basis of the symmetric tensor product using the lexicographic, one can check that the list of permutations obtained for $n=3,k=2$ is
    $(123)^{\odot 2}=(123456)$,
    $(132)^{\odot 2}=(132546)$,
    $(213)^{\odot 2}= (213465)$,
    $(231)^{\odot 2}= (231645)$,
    $(312)^{\odot 2}= (312564)$, and
    $(321)^{\odot 2}= (321654)$.
    Theorem \ref{Theorem:Subgraph} ensures that each permutation is a prefix of its second symmetric power. 
\end{example}

\begin{proof}[Pf.~of Theorem \ref{Theorem:WellDefined}]
Suppose that $\G$ and $\mathcal{H}$ are isomorphic graphs with adjacency matrices $A$ and $B$, respectively.
Then there is a permutation $\sigma \in \mathfrak{S}_n$ and corresponding permutation matrix $\Gamma\in \M_n$ such that
$B = \Gamma A \Gamma^{\top}$.
%\highlight{\comment{We should probably adjust the notation $\vec{x}, \vec{y}$ to match the rest of the paper.\color{red}done.}%
By Lemma \ref{Lemma:Permutation} , $\Gamma^{\odot k}$ is the permutation matrix such that
 $[\Gamma_{\sigma}^{\odot k}]_{\vec{i},\vec{j}}=1$
 if and only if $\vec{j}\in \Orb(\sigma (\vec{i}))$.
For $A \in \M_N$, we have
$[\Gamma A\Gamma^{\top} ] _{i,j} = [A]_{\sigma (i),\sigma (j)}$.
Lemma \ref{Lemma:MatrixRep} shows that
\begin{align*}
   [ ( \Gamma A\Gamma^{\top} )^{\odot k} ]_{\vec{i},\vec{j}}
   &=\frac{1}{\sqrt{\binom{k}{\vec{m}(\vec{i})}\binom{k}{\vec{m}(\vec{j})}}}\sum _{\substack{\vec{t}\in \Orb(\vec{i})\\ \vec{q}\in \Orb(\vec{j})}}\prod _{\ell =1}^kA_{\sigma (\vec{t}_{\ell}),\sigma (\vec{q}_{\ell})}\\
  &=\frac{1}{\sqrt{\binom{k}{\vec{m}(\vec{i})}\binom{k}{\vec{m}(\vec{j})}}}\sum _{\substack{\vec{t}\in \Orb(\sigma (\vec{i}))\\ \vec{q}\in \Orb(\sigma (\vec{j}))}}\prod _{\ell =1}^kA_{\vec{t}_{\ell}, \vec{q}_{\ell}}\\
  &=\frac{1}{\sqrt{\binom{k}{\vec{m}(\sigma (\vec{i}))}\binom{k}{\vec{m}(\sigma (\vec{j}))}}}\sum _{\substack{\vec{t}\in \Orb(\sigma (\vec{i}))\\ \vec{q}\in \Orb(\sigma (\vec{j}))}}\prod _{\ell =1}^kA_{\vec{t}_{\ell}, \vec{q}_{\ell}}\\
  &=\big[A^{\odot k}\big] _{\sigma (\vec{i}),\sigma (\vec{j})}\\
  &=\big[\Gamma ^{\odot k}A^{\odot k} (\Gamma ^T ) ^{\odot k} \big] _{\vec{i},\vec{j}}.
\end{align*}

Thus, $A^{\odot k}$ and $B^{\odot k}$ are permutation similar and hence $\G^{\odot k}$ and $\mathcal{H}^{\odot k}$
are isomorphic graphs.
\end{proof}

\end{document}